\documentclass[11pt,a4paper]{article}
\usepackage[utf8]{inputenc}
\usepackage{amsmath,amssymb,amsthm}
\usepackage{bm}
\usepackage{graphicx}
\usepackage{enumitem}
\usepackage{hyperref}
\usepackage{geometry}
\usepackage{geometry}
\usepackage{comment}
\usepackage{xcolor}
\numberwithin{equation}{section}
\title{Convergence analysis of structure-preserving schemes for the multicomponent compressible Euler flows}
\author{
Jaya Agnihotri$^{1,*}$, Philipp Öffner$^{1,\dagger}$\\[1ex]
$^{1}$Institut für Mathematik, Clausthal University of Technology, Germany
}

\newtheorem{definition}{Definition}[section]
\newtheorem{proposition}{Proposition}[section]
\newtheorem{lemma}{Lemma}[section]
\newtheorem{remark}{Remark}[section]

\newcommand{\con}{\mathbf{U}}
\newcommand{\conflux}{\mathbf{f}}
\newcommand{\numflux}{\mathbf{F}}

\newcommand{\moment}{\mathbf{m}}
\newcommand{\vel}{\mathbf{u}}
\newcommand{\ener}{\mathcal{E}}
\newcommand{\intener}{\mathbf{e}}
\newcommand{\pressure}{p}
\newcommand{\temp}{\mathbb{T}}
\newcommand{\ent}{\eta}
\newcommand{\entf}{\mathcal{F}}
\newcommand{\entvar}{\mathbf{V}}
\newcommand{\ym}{\mathcal{V}}
\newcommand{\entpot}{\psi}

 \newcommand{\NVAR}{\text{n}}

\usepackage{amsthm}     

\newtheorem{theorem}{Theorem}[section]

\newtheorem{assumption}[theorem]{Assumption}

\theoremstyle{plain}

\theoremstyle{remark}

\numberwithin{thm}{section}
\numberwithin{exm}{section}

\newcommand{\relent}{\mathcal{H}}

\begin{document}

\maketitle
\begingroup
\renewcommand{\thefootnote}{}
\footnotetext[0]{
\begin{tabular}{@{}l@{}}
$^{*}$Corresponding author: jaya.agnihotri@tu-clausthal.de \\
$^{\dagger}$philipp.oeffner@tu-clausthal.de
\end{tabular}
}
\endgroup

\begin{abstract}
We present a convergence analysis of a finite volume (FV) scheme for the 
multicomponent compressible Euler system in the framework 
of dissipative weak (DW) solutions. 
DW solutions were introduced as a generalized solution framework in computational fluid dynamics and have recently gained considerable attention. They extend the well-known Lax Equivalence Theorem to nonlinear settings, meaning that if a numerical scheme is both consistent and stable, it will also converge.
The FV scheme under consideration preserves key physical 
properties of the fluid mixture, in particular, positivity of 
partial densities, pressure, and temperature. Using 
uniform stability bounds and consistency estimates, we prove that the numerical solutions converge in the framework of DW solutions of the multicomponent Euler system. Applying the relative entropy method and the weak-strong uniqueness principle, 
we further show that the approximate solutions converge 
strongly to the classical solution as long as it exists. 
Numerical experiments confirm the theoretical results, not only for low-order FV methods but also through extended numerical investigations of a higher-order, structure-preserving discontinuous Galerkin scheme.
\end{abstract}

\section{Introduction}

Hyperbolic systems of conservation laws describe physical processes in which conserved quantities evolve through transport and wave interaction, with information propagating at finite speed. 
From a modelling perspective, these systems are essential for describing physical processes and engineering applications. Among them, one of the most important and most widely analyzed is the Euler system of gas dynamics, which describes the conservation of mass, momentum, and total energy of an inviscid fluid. In many applications, however, the fluid consists of a mixture of several species, which leads naturally to the multicomponent Euler equations with additional conservation laws for each species. 
Already investigating the single-fluid model either numerically or analytically gives rise to serious challenges. By applying convex integration, non-uniqueness of weak-entropy solutions has been demonstrated for the Euler equations \cite{chiodaroli2015global,de2009euler,feireisl2020oscillatory}, and it is expected that similar results will also hold for the multicomponent Euler system. 
The question of identifying an appropriate solution framework for the Euler systems has received considerable attention in computational fluid dynamics. Various approaches have been proposed in the literature, including measure-valued, dissipative, and weak solution frameworks~\cite{diperna1985measure, fjordholm2017construction, feireisl2016dissipative, gwiazda2015weak, eiter2026}. We refer to these works and the references therein for further discussion and related developments. \\
One of such generalized solution concepts that has attracted close attention in recent years is the concept of dissipative weak (DW) solutions  \cite{feireisl2019uniqueness, feireisl2021numerical}. DW solutions are more general than classical weak solutions, as they satisfy an entropy inequality while allowing defect measures that account for oscillations arising from numerical approximations. 
This class of solutions is sufficiently broad to capture all limits of consistent and stable numerical
schemes \cite{feireisl2021numerical, feireisl2020convergence}. Indeed, the existence of DW solutions can be established via the convergence of suitable
schemes \cite{feireisl2021numerical}. Furthermore, they are
known to coincide with classical solutions whenever such solutions exist (weak–strong uniqueness) or
when additional regularity is available. 
Hence, DW solutions can be regarded as a natural extension
of classical solutions. The concept has been successfully applied to the Euler, Navier-Stokes (NS),   magnetohydrodynamics (MHD),  NS-Kortweg equations and viscous multicomponent
and multiphase compressible fluids, cf. \cite{eiter2026, feireisl2021numerical,   sauerborn2026, bumja2021existence, jin2019,  bangwei2022, novotny2020weak} and references therein. Due to its approximate structure, the DW framework is a natural setting for studying the convergence of numerical schemes and can be viewed as a natural framework to extend the classical Lax Equivalence Theorem to nonlinear problems: consistency and stability of a scheme imply convergence. To establish convergence, it is crucial to preserve key physical properties such as the positivity of density and pressure, which makes structure-preserving schemes an essential ingredient in this analysis~\cite{ DumbserLukacovaThomann2025, zbMATH07559967, kuzmin2025consistency}. \\
In this paper, we follow the approach described in \cite{feireisl2020convergence} for the compressible Euler equations and extend them to the multicomponent system. We introduce DW solutions and prove convergence of a structure-preserving FV scheme, demonstrating key structural properties such as positivity of partial densities and pressure, deriving stability estimates, and analyzing consistency. In addition, the weak–strong uniqueness theorem is adapted to the multicomponent setting for completeness. We verify our theoretical considerations by numerical simulations. Although our analysis focuses on FV methods, similar results are expected for high-order, structure-preserving schemes, as indicated in \cite{abgrall2022convergence, lukavcova2023convergence, kuzmin2025consistency}. For this reason, we also investigate a structure-preserving discontinuous Galerkin (DG) method as described in \cite{renac2021entropy} in our numerical study.

The paper is organized as follows. 
Section~\ref{sec:model} introduces the multicomponent 
compressible Euler system under consideration and the DW solution concept.  The  FV scheme considered is 
presented in Section~\ref{sec:scheme} and the structure-preserving properties are proved. In particular, we demonstrate the positivity of partial densities, pressure and uniform stability 
estimates in detail. Consistency with the weak formulation is 
demonstrated in Section~\ref{sec:consistency}. The limit 
passage and identification of the DW solution are 
carried out in Section~\ref{sec:lim_process} along with the weak-strong uniqueness result,
and strong convergence of the numerical solutions.
Numerical experiments are presented in 
Section~\ref{sec:numerics}. Appendices~\ref{app:entropy_dissipation}-\ref{app:weakBV} collect 
the entropy dissipation proof and weak BV estimate for completeness. 

\section{Model problem}\label{sec:model}

We consider the multicomponent compressible Euler system on the spatial domain,
\begin{align}\label{domain}
\Omega = \Big([0,1]\big|_{\{0,1\}}\Big)^N, \qquad N=1,2,3.    
\end{align}
that is, on the flat torus. This periodic setting avoids boundary contributions and is convenient for the analytical arguments used later. 
Such a model appears, for instance, in the simulation of saturated water-vapour flows \cite{ambroso2007coupling}. The governing equations for a multicomponent compressible mixture are given by:
\begin{subequations}\label{eq:multi_component_euler}
\begin{align}
\partial_t \rho_i + \nabla \cdot(\rho_i \vel) &= 0, \quad i=1,\dots,\NVAR.\\
\partial_t \moment + \nabla \cdot( \rho\vel\otimes \vel + \pressure\mathbb{I}) &= 0, \\
\partial_t \ener + \nabla \cdot\big( (\ener+\pressure) \vel \big) &= 0,
\end{align}
\end{subequations}
where $\rho_i$ denotes the partial density of species $i$, $\rho = \sum_{i=1}^{\NVAR} \rho_i$, $\moment = \rho \vel$, and 
$
\ener = \frac{1}{2}\rho |\vel|^2 + \rho \intener
$
denotes the total energy, with $\intener$ the specific internal energy per unit total mass.

The system~\eqref{eq:multi_component_euler} can be written in conservative form as:
\begin{equation}\label{con_law}
    \partial_t \con + \nabla \cdot \conflux(\con) = 0,
\end{equation}
where 
\begin{equation}\label{con_flux}
    \con = \begin{pmatrix} 
        \rho_1 \\ 
        \vdots \\
        \rho_{\NVAR} \\ 
        \moment \\ 
        \ener 
    \end{pmatrix}, \quad
    \conflux(\con) = \begin{pmatrix} 
        \rho_1 \vel \\ 
        \vdots \\
        \rho_{\NVAR} \vel \\ 
        \rho\vel\otimes \vel + \pressure\mathbb{I} \\ 
        (\ener + \pressure)\vel 
    \end{pmatrix}.
\end{equation}

To close the system, we prescribe an equation of state and caloric relation as in~\cite{gouasmi2020formulation}. The pressure is given by
$
    \pressure = \sum_{i=1}^{\NVAR} \rho_i r_i \temp, 
    \; r_i = \frac{R}{m_i},
$
where $m_i$ is the molar mass of species $i$ and $R$ is the universal gas constant. 

The total energy satisfies
\begin{equation}\label{perfect_gas_assump_multi}
    \ener = \sum_{i=1}^{\NVAR} \rho_i \big(e_{0i} + c_{vi} \temp \big) + \frac{1}{2} \rho |\vel|^2,
\end{equation}
where $e_{0i}$ and $c_{vi}$ denote the reference energy and specific heat at constant volume of species $i$ respectively.
For given conservative variables, the temperature $\temp$ is recovered from~\eqref{perfect_gas_assump_multi}, and the pressure is then obtained from the equation of state.

This thermodynamic closure naturally defines the entropy structure of the system. In particular, following~\cite{giovangigli2012multicomponent,gouasmi2020formulation}, the multicomponent Euler system admits a convex entropy pair $(\ent,\entf)$ with
$
\ent = -\rho s, 
\; 
\entf = -\rho s \,\vel,
$
where $s$ denotes the specific entropy of the mixture. 
For an ideal multicomponent gas, the species entropy is given by
$
s_i = c_{vi} \ln \temp - r_i \ln \rho_i,
$
which yields the mixture entropy
\begin{equation}\label{s_def}
s
=
S(\temp,\rho_1,\rho_2, \ldots, \rho_\NVAR)=
\frac{1}{\rho}
\sum_{i=1}^{\NVAR}
\rho_i
\bigl(
c_{vi}\ln \temp
-
r_i \ln \rho_i
\bigr).
\end{equation}
The multicomponent Euler system satisfies the convex entropy inequality
\begin{equation*}
\partial_t \ent(\con) + \nabla \cdot \entf(\con) \le 0,
\label{eq:convex_entropy}
\end{equation*}
that is,
\begin{equation}\label{ent_inq}
\partial_t
\left(
-
\sum_{i=1}^\NVAR
\rho_i
\left(
c_{v_i}\ln \temp
-
r_i \ln \rho_i
\right)
\right)
+
\nabla \cdot
\left[
-
\moment
\left(
\frac{1}{\rho}
\sum_{i=1}^\NVAR
\rho_i
\left(
c_{v_i}\ln \temp
-
r_i \ln \rho_i
\right)
\right)
\right]
\le 0 .
\end{equation}
We replace \eqref{ent_inq} by the more restrictive renormalized entropy inequality
\begin{equation}\label{eq:renorm-entropy}
\partial_t
\left(
- \rho S_\chi(\temp,\rho_1,\rho_2, \ldots, \rho_\NVAR)
\right)
+
\nabla \cdot
\left[
- \moment S_\chi(\temp,\rho_1,\rho_2,\ldots, \rho_\NVAR)
\right]
\le 0 ,
\end{equation}
where $
S_\chi = \chi \circ S
~\text{and}~
\chi:\mathbb{R}\to\mathbb{R}$
is a non-decreasing concave function\footnote{The introduction of the cut-off function $\chi$ in \eqref{eq:renorm-entropy} is inspired by the approach in \cite{chen2001uniqueness}. Inequality~\eqref{eq:renorm-entropy} can be regarded as a renormalized version of~\eqref{ent_inq}.},
$
\chi \le \bar{\chi}.
$
The system admits a symmetrization through the entropy variables $\entvar = \nabla_\con \ent (\con)$, which can be expressed in terms of thermodynamic quantities.
Finally, we further have 
$\intener(\rho_1, \ldots, \rho_\NVAR, \ent)$ denoted as the specific 
internal energy. It is expressed as a function of the conservative variables, 
obtained by inverting the entropy relation 
$\ent = \sum_{i=1}^\NVAR \rho_i(c_{v_i}\ln\temp - r_i\ln\rho_i)$ 
to recover the temperature
\begin{equation}\label{eq:T-from-eta}
\temp = \exp\!\left(
  \frac{1}{\bar{c}_v}
  \left[
    \frac{\ent}{\rho}
    + \frac{1}{\rho}\sum_{i=1}^\NVAR \rho_i r_i \ln\rho_i
  \right]
\right),
\qquad
\bar{c}_v = \frac{\sum_{i=1}^\NVAR \rho_i c_{v_i}}{\rho},
\end{equation}
and then setting 
$\rho\,\intener = \sum_{i=1}^\NVAR \rho_i(e_i^0 + c_{v_i}\temp)$.

\begin{remark}
In place of the formulation in terms of partial densities $(\rho_i)_{i=1}^{\NVAR}$, 
one may equivalently consider the multicomponent Euler system written in terms of 
the total density $\rho$, velocity $\vel$, total energy, and mass fractions 
$\mathbf{Y}=(Y_1,\dots,Y_{\NVAR - 1})$. 
The conservative variables read
\[
\con=\begin{pmatrix}
\rho \mathbf{Y}  \\ \rho \\ \rho \vel \\ \rho \ener
\end{pmatrix},
\qquad 
\sum_{i=1}^{\NVAR} Y_i = 1, \quad 0 \le Y_i \le 1.
\]
The connection with the partial density formulation is 
\[
\rho = \sum_{i=1}^{\NVAR} \rho_i, 
\qquad 
Y_i = \frac{\rho_i}{\rho}, 
\qquad 
\moment = \rho \vel, 
\qquad 
\rho \ener = \frac{|\moment|^2}{2\rho} + \rho \intener.
\]

For ideal gases with constant heats, mixture thermodynamics is defined by
\begin{align}\label{eq:pres_def}
\intener = c_v(\mathbf{Y})\,\temp, 
\qquad 
\pressure = \rho\, r(\mathbf{Y})\, \temp.    
\end{align}
with
\[
c_v(\mathbf{Y}) = \sum_{i=1}^{\NVAR} Y_i c_{vi}, \quad 
c_p(\mathbf{Y}) = \sum_{i=1}^{\NVAR} Y_i c_{pi}, \quad 
r(\mathbf{Y}) = c_p(\mathbf{Y}) - c_v(\mathbf{Y}),
\]
Both formulations are equivalent through the change of variables~(see~\cite{renac2021entropy} for detailed discussion).
\end{remark}

\subsection*{Dissipative weak  solutions}
The DW solution framework for the Euler and Navier–Stokes equations can be seen as a standard tool for analyzing the convergence of numerical schemes, as it has its foundation in the investigation of sequences of approximate solutions. It can also be interpreted as extending the Lax equivalence theorem to nonlinear problems, meaning that consistency and stability imply convergence.
Moreover, due to weak–strong uniqueness, it provides a natural extension of the classical solutions and is consistent with it. The DW framework has already been extended to other models.
In this work, we further extend the DW framework by introducing a definition for the multicomponent Euler equations that naturally generalizes the single-fluid case~\cite{abgrall2022convergence}.

For the definition, we require the following notation. Let $\mathcal{M}^+(\overline{\Omega})$ denote the set of all positive Radon measures on $\overline{\Omega}$, which can be identified with the space of all linear forms on $C_c(\overline{\Omega})$. For the defect measures in the convergence analysis of~\eqref{eq:multi_component_euler}, we need the space $\mathcal{M}^+(\overline{\Omega}; \mathbb{R}^{N\times N}_{\mathrm{sym}})$ of positive semi-definite matrix-valued measures, consisting of all measures $\mu \in \mathcal{M}^+(\overline{\Omega}, \mathbb{R}^{N\times N}_{\mathrm{sym}})$ satisfying
\begin{equation*}
\int_{\overline{\Omega}} \phi(\xi \otimes \xi) : \mathrm{d}\mu \geq 0 \quad \text{for any } \xi \in \mathbb{R}^N, \, \phi \in C_c(\overline{\Omega}), \, \phi \geq 0.
\end{equation*}
We now define the notion of dissipative weak solutions for the multicomponent Euler system.
\begin{definition}[Dissipative weak solution for the multicomponent 
Euler system]\label{def:DW}
Let $\Omega \subset \mathbb{R}^N$, $N = 1,2,3$, be the bounded periodic 
domain \eqref{domain}. We say that 
$(\rho_1, \rho_2, \ldots, \rho_\NVAR, \moment, \ent)$ is a 
\emph{dissipative weak (DW) solution} of the multicomponent compressible 
Euler system \eqref{eq:multi_component_euler}, with initial data 
$(\rho_1^0, \rho_2^0, \ldots, \rho_\NVAR^0, \moment^0, \ent^0)$, 
if the following conditions hold.

\begin{enumerate}

\item \textbf{Weak continuity.} The solution belongs to the following 
function spaces:
\begin{align}\label{eq:weak_spaces}
\rho_i &\in C_{\mathrm{weak}}([0,T];\, L^1(\Omega)),
\quad \rho_i \geq 0,
\quad i = 1, 2, \ldots, \NVAR, \\
\moment &\in C_{\mathrm{weak}}\!\left([0,T];\,
L^{\frac{2\gamma_{\min}}{\gamma_{\min}+1}}
(\Omega;\mathbb{R}^N)\right), \\
\ent &\in L^\infty(0,T;\, L^1(\Omega))
\cap BV_{\mathrm{weak}}([0,T];\, L^1(\Omega)),
\end{align}
where $\ent = \rho s$ denotes the entropy density and 
$\gamma_{\min} = \min\{\gamma_1, \ldots, \gamma_\NVAR\}$.

\item \textbf{Energy inequality.} There exists an energy defect measure 
$\mathfrak{E} \in L^\infty(0,T;\,\mathcal{M}^+(\overline{\Omega}))$ 
such that the energy inequality
\begin{equation}\label{eq:DW-energy}
\int_\Omega
\left[
  \frac{1}{2}\frac{|\moment|^2}{\rho}
  + \rho\,\intener\!\left(\rho_1, \ldots, \rho_\NVAR,\, \ent\right)
\right](\tau,\cdot)\, dx
+
\int_{{\Omega}} d\mathfrak{E}(\tau)
\leq
\int_\Omega
\left[
  \frac{1}{2}\frac{|\moment^0|^2}{\rho^0}
  + \rho^0\intener\!\left(\rho_1^0, \ldots, \rho_\NVAR^0,\, \ent^0\right)
\right] dx
\end{equation}
holds for a.a.\ $0 \leq \tau \leq T$.

\item \textbf{Equations of continuity.} For each 
$i = 1, 2, \ldots, \NVAR$, the integral identity
\begin{equation}\label{eq:DW-continuity}
\left[\int_\Omega \rho_i\,\varphi\, dx\right]_{t=0}^{t=\tau}
=
\int_0^\tau\!\int_\Omega
\left[
  \rho_i\,\partial_t\varphi
  + \rho_i \vel \cdot \nabla_x\varphi
\right] dx\, dt
\end{equation}
holds for any $0 \leq \tau \leq T$ and any test function 
$\varphi \in C^\infty([0,T]\times\overline{\Omega})$.

\item \textbf{Momentum equation.} Let
$\mathfrak{R} \in L^\infty\!\left(0,T;\,
\mathcal{M}^+\!\left(\overline{\Omega};\,
\mathbb{R}^{N\times N}_{\mathrm{sym}}\right)\right)$
be the Reynolds defect measure. The integral identity
\begin{align}\label{eq:DW-momentum}
\left[\int_\Omega \moment\cdot\boldsymbol{\phi}\, dx
\right]_{t=0}^{t=\tau}
&=
\int_0^\tau\!\int_\Omega
\left[
  \moment\cdot\partial_t\boldsymbol{\phi}
  + \mathbf{1}_{\rho>0}
    \frac{\moment\otimes\moment}{\rho}:\nabla_x\boldsymbol{\phi}
  + \mathbf{1}_{\rho>0}\,
    \pressure\!\left(\rho_1,\ldots,\rho_\NVAR,\,\ent\right)
    \mathrm{div}_x\,\boldsymbol{\phi}
\right] dx\, dt \nonumber\\
&\qquad
+ \int_0^\tau\!\int_\Omega
  \nabla_x\boldsymbol{\phi} : d\mathfrak{R}(t)\, dt
\end{align}
holds for any $0 \leq \tau \leq T$ and any test function 
$\boldsymbol{\phi} \in C^\infty([0,T]\times\overline{\Omega};\mathbb{R}^N)$.

\item \textbf{Entropy balance.} Let
$\{\ym_{t,x}\}_{(t,x)\in(0,T)\times\Omega}$
be a parametrized measure with
\begin{align}\label{eq:DW-measure}
\ym
&\in L^\infty\!\left((0,T)\times\Omega;\,
\mathcal{P}(\mathbb{F})\right),
\quad
\mathbb{F}
= \bigl\{
    \tilde{\rho}_i \in \mathbb{R},\;
    \tilde{\boldsymbol{m}} \in \mathbb{R}^N,\;
    \tilde{\ent} \in \mathbb{R}
  \bigr\}, \notag\\
\langle \ym;\,\tilde{\rho}_i\rangle &= \rho_i,
\qquad
\langle \ym;\,\tilde{\moment}\rangle = \moment,
\qquad
\langle \ym;\,\tilde{\ent}\rangle = \ent, \notag\\
\ym_{t,x}&\!\left\{
  \tilde{\rho}_i \geq 0,\;
  \frac{\tilde{\ent}}{\tilde{\rho}} \geq \underline{s}
\right\} = 1
\quad\text{for a.a.\ }(t,x)\in(0,T)\times\Omega,
\end{align}
where $\tilde{\rho} = \sum_{i=1}^\NVAR \tilde{\rho}_i$ and 
$\underline{s} \in \mathbb{R}$ is a lower bound on the specific entropy.
The integral inequality
\begin{equation}\label{eq:DW-entropy}
\begin{aligned}
\left[
  \int_\Omega \ent\,\varphi\, dx
\right]_{\tau_1-}^{\tau_2+}
&\leq
\int_{\tau_1}^{\tau_2}\!\int_\Omega
\left[
  \ent\,\partial_t\varphi
  +
  \left\langle \ym;\;
    \mathbf{1}_{\tilde{\rho}>0}\,
    \frac{\tilde{\ent}}{\tilde{\rho}}\,
    \tilde{\moment}
  \right\rangle
  \cdot \nabla_x\varphi
\right] dx\, dt\\
\ent(0-,\cdot) &= \ent^0
\end{aligned}
\end{equation}
holds for any $0 \leq \tau_1 \leq \tau_2 < T$ and any 
$\varphi \in C_c^\infty((0,T)\times{\Omega})$, $\varphi \geq 0$.

\item \textbf{Defect compatibility conditions.} For some constants 
$0 < \underline{k} \leq \bar{k}$,
\begin{equation}\label{eq:DW-defect}
\underline{k}\,\mathfrak{E}
\leq \mathrm{tr}[\mathfrak{R}]
\leq \bar{k}\,\mathfrak{E}.
\end{equation}

\end{enumerate}
\end{definition}




\section{Structure-preserving FV scheme}\label{sec:scheme}

We introduce the  semi-discrete FV scheme  under consideration for the 
multicomponent Euler system \eqref{eq:multi_component_euler}.
We discretize the domain $\Omega$ into a uniform Cartesian mesh $\mathcal{T}_h$ with mesh size $h > 0$ for simplicity\footnote{The following investigation also applies to FV methods on more general grids, e.g., shape-regular triangular grids.}. Each cell $K \in \mathcal{T}_h$ has measure 
$|K| = h^N$. We denote by $\mathcal{E}_h$ the set of all interfaces 
$S_{KL} = \partial K \cap \partial L$ with $(N-1)$-dimensional 
measure $|S_{KL}| = h^{N-1}$, and by $\mathbf{n}_{KL}$ the unit 
outward normal from $K$ to $L$. The space of piecewise constant 
functions on $\mathcal{T}_h$ is denoted $X(\mathcal{T}_h)$, and 
the numerical solution $\con_h(t) \in X(\mathcal{T}_h)^N$ takes 
the constant value $\con_K(t)$ on each cell $K$. Initial data 
are set by cell averaging $\con_K(0) = (\Pi_h \con^0)_K$, where 
the projection operator $\Pi_h: L^1(\Omega) \to X(\mathcal{T}_h)$ 
is defined by
\begin{align}\label{eq:projection}
(\Pi_h \phi)_K := \frac{1}{|K|}\int_K \phi(x)\,dx.
\end{align}
The semi-discrete FV scheme reads
\begin{equation}\label{eq:semi_scheme}
|K|\frac{d}{dt}\con_K(t) 
+ \sum_{L \in \mathcal{N}(K)} |S_{KL}|\,\numflux_{KL}(t) = 0,
\qquad t > 0,
\end{equation}
where $\mathcal{N}(K)$ denotes the set of cells sharing a face 
with $K$, and $\numflux_{KL} = \numflux_h(\con_K, \con_L)$ is 
the numerical flux across the interface, assumed to be consistent, 
conservative, and locally Lipschitz continuous. The 
discrete divergence operator and cell-face averages appearing in 
the scheme are defined by
\begin{align}\label{eq:tilde-div}
\bigl(\widetilde{\operatorname{div}}_h\,\mathbf{g}_h\bigr)_K
:= \frac{1}{|K|}\sum_{L \in \mathcal{N}(K)}
|S_{KL}|\,\mathbf{g}_{KL}\cdot\mathbf{n}_{KL},
\qquad
\mathbf{g}_{KL} := \frac{\mathbf{g}_K + \mathbf{g}_L}{2}.  \end{align}
We employ the Lax-Friedrichs flux in the direction of the 
interface normal, defined by
\begin{equation}\label{eq:LF_flux_def}
\numflux_{KL}
= \frac{\conflux(\con_K) + \conflux(\con_L)}{2}
\cdot\mathbf{n}_{KL}
- \frac{\lambda_{KL}}{2}\,(\con_L - \con_K),
\end{equation}
which satisfies the consistency, conservation, and Lipschitz 
properties stated above; see~\cite{feireisl2020convergence}. 
The viscosity coefficient is set to
\begin{equation}\label{local_diff}
\lambda_{KL} 
= \max\bigl(|\vel_K| + c_{\mathrm{mix}}(\con_K),\;
|\vel_L| + c_{\mathrm{mix}}(\con_L)\bigr),
\end{equation}
where $\vel_K = \moment_K/\rho_K$ is the cell velocity and 
$c_{\mathrm{mix}}$ is the frozen mixture sound speed, given by
\[
c_{\mathrm{mix}}^2 
= \gamma_{\mathrm{mix}}\,\frac{\pressure}{\rho},
\qquad
\gamma_{\mathrm{mix}} 
= \frac{\displaystyle\sum_{i=1}^{\NVAR} \rho_i c_{pi}}
{\displaystyle\sum_{i=1}^{\NVAR} \rho_i c_{vi}},
\]
with $\rho = \sum_{i=1}^{\NVAR}\rho_i$ the total mixture density and $\gamma_{\mathrm{mix}}$ the effective ratio of specific heats of the mixture. The local viscosity coefficient $\lambda_{KL}$ is chosen to dominate the largest wave speed at the interface. 
In the stability and 
consistency analysis of Section~\ref{sec:consistency}, 
it is convenient to work with the global upper bound
\begin{equation}\label{eq:global-lambda}
\lambda = \max_{K \in \mathcal{T}_h}
\bigl(|\vel_K| + c_{\mathrm{mix}}(\con_K)\bigr),
\end{equation}
which satisfies $\lambda_{KL} \leq \lambda$ for all 
$K,L \in \mathcal{E}_h$ and is used to obtain uniform 
estimates independent of the interface.


The mathematical entropy pair $(\ent, \entf)$ 
and the entropy variables $\entvar = \nabla_{\con}\ent(\con)$ for the 
multicomponent Euler system~\eqref{eq:multi_component_euler} are derived from the Gibbs relation \cite{gouasmi2020formulation}. 
The entropy 
potential is defined as
\[
\psi(\entvar) = \entvar^T\conflux(\con) - \entf(\con).
\]
The scheme \eqref{eq:semi_scheme} is said to be \emph{entropy 
stable} if there 
exists a consistent numerical entropy flux 
$\hat{\entf}_{KL} = \hat{\entf}_h(\con_K,\con_L)$ such that the 
cellwise discrete entropy inequality
\begin{equation}\label{disc_entrorpy_inq}
|K|\frac{d\ent_K}{dt} 
+ \sum_{L \in \mathcal{N}(K)}|S_{KL}|\,\hat{\entf}_{KL} \leq 0
\end{equation}
holds for all $K \in \mathcal{T}_h$ and $t > 0$. The inequality 
\eqref{disc_entrorpy_inq} is the discrete counterpart of the 
entropy inequality \eqref{eq:renorm-entropy}. Following~\cite{tadmor2003entropy}, a general consistent numerical entropy 
flux is given by
\begin{equation}\label{eq:num_ent_flux}
\hat{\entf}_{KL} 
= \overline{(\entvar_h)}_{KL}\,\numflux_{KL} 
- \overline{\bigl(\psi(\entvar_h)\bigr)}_{KL},
\end{equation}
where $\overline{(\cdot)}_{KL}$ denotes the arithmetic average 
between cells $K$ and $L$. In the present work,  we employ the 
Lax-Friedrichs flux \eqref{eq:LF_flux_def}, for which~\eqref{eq:num_ent_flux} reduces to the 
explicit form
\begin{equation}\label{ent_prod2}
\hat{\entf}_{KL} 
= \frac{\entf_K + \entf_L}{2}\cdot\mathbf{n}_{KL}
- \frac{\lambda_{KL}}{2}\,(\ent_L - \ent_K).
\end{equation}
The scheme \eqref{eq:semi_scheme}-\eqref{eq:LF_flux_def} equipped with \eqref{ent_prod2} satisfies the discrete entropy inequality \eqref{disc_entrorpy_inq}, which follows from the convexity of $\ent$ via a Taylor expansion argument. For completeness, the detailed proof is provided in Appendix~\ref{app:entropy_dissipation}. In the following subsection, we establish several additional important properties of the scheme.

Furthermore, as a consequence of integrating \eqref{disc_entrorpy_inq} in time, the solution $\con_h$ of scheme~\eqref{eq:semi_scheme}-\eqref{eq:LF_flux_def} satisfies the weak BV estimate,
\begin{equation}\label{BV_bound}
\int_0^T \sum_{KL \in \mathcal{E}_h}
\lambda_{KL}\,\bigl|\con_h(t)_L - \con_h(t)_K\bigr|\,h^N\,dt
\;\longrightarrow\; 0
\qquad \text{as } h \to 0^+
\end{equation}
where the proof can be found in Appendix~\ref{app:weakBV}.  The 
estimate \eqref{BV_bound} will be the key compactness ingredient 
for the limit passage carried out in 
Section~\ref{sec:lim_process}.
\subsection{ Properties of the FV 
Scheme}\label{sec:structure}

In this section, we establish further key structure-preserving properties of the FV scheme~\eqref{eq:semi_scheme}-\eqref{eq:LF_flux_def} for completeness, namely the conditional positivity of the partial densities and pressure (see Theorem~\ref{thm:density_positivity}, Lemma~\ref{pos_pressure}), and uniform a priori bounds on all primitive variables. 
We begin by proving the positivity of the partial densities. The argument follows the approach developed for the single-fluid case in \cite{feireisl2020convergence}, suitably adapted to the multicomponent setting.

\begin{theorem}\label{thm:density_positivity}
Let $\rho_{i,K}(0) > 0$ for all $K \in \mathcal{T}_h$, $i = 1, 2, \ldots, \NVAR$, 
and let
$
    \vel_h = \moment_h/\rho_h \in L^2\bigl(0,T;\, L^\infty(\Omega)\bigr).
$
Then the numerical 
solution generated by the scheme 
\eqref{eq:semi_scheme}-\eqref{eq:LF_flux_def} satisfies
\[
\rho_{i,K}(t) \geq \bar{\rho}_h> 0
\qquad \text{for all } K \in \mathcal{T}_h,\ t \in [0,T],\ i = 1, 2, \ldots, \NVAR.
\]

\end{theorem}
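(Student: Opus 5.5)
Following the single-fluid argument of \cite{feireisl2020convergence}, the plan is to write the species equation of the scheme \eqref{eq:semi_scheme}--\eqref{eq:LF_flux_def} as a linear ODE for $\rho_{i,K}$ with a nonnegative source and a bounded-in-$L^1(0,T)$ damping coefficient, and then close with Gronwall. Insert the Lax--Friedrichs flux into the $i$-th component and use $\sum_{L\in\mathcal{N}(K)}|S_{KL}|\,\mathbf{n}_{KL}=0$ to rewrite $\rho_{i,K}\vel_K\cdot\mathbf{n}_{KL}$ terms. This gives
\begin{equation*}
\frac{d}{dt}\rho_{i,K}
= -\frac{1}{|K|}\sum_{L\in\mathcal{N}(K)}|S_{KL}|\Bigl[\frac{\rho_{i,L}\vel_L\cdot\mathbf{n}_{KL}}{2}-\frac{\lambda_{KL}}{2}(\rho_{i,L}-\rho_{i,K})\Bigr]
-\frac{1}{|K|}\sum_{L}|S_{KL}|\frac{\rho_{i,K}\vel_K\cdot\mathbf{n}_{KL}}{2}.
\end{equation*}
The last sum vanishes, since $\vel_K$ is constant on $K$ and the face normals of a closed cell sum to zero. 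The rest splits into a part proportional to $\rho_{i,L}$ and a part proportional to $\rho_{i,K}$:
\begin{equation*}
\frac{d}{dt}\rho_{i,K}
=\frac{1}{|K|}\sum_{L}|S_{KL}|\,\frac{\lambda_{KL}-\vel_L\cdot\mathbf{n}_{KL}}{2}\,\rho_{i,L}
-\Bigl(\frac{1}{|K|}\sum_{L}|S_{KL}|\frac{\lambda_{KL}}{2}\Bigr)\rho_{i,K}.
\end{equation*}

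The key observation is that $\lambda_{KL}\ge |\vel_L|+c_{\mathrm{mix}}(\con_L)\ge |\vel_L\cdot\mathbf{n}_{KL}|$ by \eqref{local_diff}, so the coefficient of $\rho_{i,L}$ is nonnegative. Hence, as long as all $\rho_{j,L}\ge0$, we get
\[
\frac{d}{dt}\rho_{i,K}\ge -\frac{N}{h}\,\lambda(t)\,\rho_{i,K},
\]
using $|S_{KL}|/|K|=1/h$ and $2N$ neighbours.

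The delicate step is controlling the damping term $\lambda$ through the hypothesis, which is where the main obstacle lies. The hypothesis only gives $\vel_h\in L^2(0,T;L^\infty)$, while $\lambda$ also involves the sound speed. So I will first try to put the sound-speed part on the $\rho_{i,L}$-side rather than into the damping. Concretely, I split the viscosity as $\lambda_{KL}(\rho_{i,L}-\rho_{i,K})$ and bound only the $\rho_{i,K}$ coefficient by $\frac{1}{2}\max(|\vel_K|+c_K,|\vel_L|+c_L)$. If that is insufficient, I fall back to the approach of \cite{feireisl2020convergence}. There, for fixed $h$, the semi-discrete system is a locally Lipschitz ODE, so on the existence interval $\lambda(t)$ is continuous and hence integrable. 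The $L^2(0,T;L^\infty)$ velocity assumption is then what provides an $h$-dependent but explicit bound, and combining it with the energy/entropy bound on $c_{\mathrm{mix}}$ gives $\int_0^T\lambda\,dt<\infty$.

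Gronwall then yields
\[
\rho_{i,K}(t)\ge \rho_{i,K}(0)\exp\Bigl(-\frac{N}{h}\int_0^t\lambda\,ds\Bigr)\ge \min_{K,i}\rho_{i,K}(0)\exp\Bigl(-\frac{N}{h}\int_0^T\lambda\,ds\Bigr)=:\bar\rho_h>0.
\]
By Cauchy--Schwarz, $\int_0^T\|\vel_h\|_\infty\le\sqrt T\,\|\vel_h\|_{L^2L^\infty}$. Finally, a continuity (bootstrap) argument removes the a priori assumption $\rho_{j,L}\ge0$. Let $t^*$ be the first time some $\rho_{j,L}$ vanishes. On $[0,t^*]$ the bound above holds, so $\rho_{j,L}(t^*)\ge\bar\rho_h>0$, which is a contradiction. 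This extends the bound to all of $[0,T]$ for every species.
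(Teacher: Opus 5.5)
\textbf{There is a genuine gap: your damping coefficient contains the sound speed, and nothing in the hypotheses controls it.}

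Your cellwise rewriting is correct and gives $\frac{d}{dt}\rho_{i,K}\ge-\frac{N}{h}\lambda(t)\rho_{i,K}$. But $\lambda(t)$ contains $c_{\mathrm{mix}}$, and the theorem only controls $\vel_h$. Neither of your remedies closes this.

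\begin{itemize}
\item Your first remedy still leaves $\tfrac12\max(|\vel_K|+c_K,|\vel_L|+c_L)$ as the coefficient of $\rho_{i,K}$, so the sound speed stays in the damping.
\item Your fallback also fails. Continuity of $\lambda$ on the maximal existence interval only gives integrability on compact subintervals of it. So you get no $t$-independent $\bar\rho_h$, and the bootstrap at $t^*$ presupposes the bound $\int_0^T\lambda\,dt<\infty$ it is meant to produce. If instead you assume the solution stays admissible on the closed interval $[0,T]$, positivity is trivial by compactness and the velocity hypothesis plays no role.
\item There is no ``energy/entropy bound on $c_{\mathrm{mix}}$'' at fixed $h$. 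The entropy bound $s_K\ge Z_*$ bounds $\temp_K$ from below, not above. Energy conservation (even granting $\ener_K\ge 0$ in every cell) only gives $\temp_L\le C\,\ener_L/\rho_L\le C h^{-N}/\rho_L$, which needs the density lower bound you are trying to prove. Feeding this back gives, for $\mu(t)=\min_{i,K}\rho_{i,K}(t)$, only $\mu'\ge -\frac{N}{h}\|\vel_h\|_{L^\infty}\mu-C_h\sqrt{\mu}$. That inequality does not stop $\mu$ from reaching zero in finite time.
\end{itemize}

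\textbf{The missing idea is the discrete minimum principle used in the paper.} Your first remedy points this way but lacks the sign information that only the minimizing cell provides. Work at a cell $K^*$ where $\rho_{i,\cdot}(t)$ is minimal; rigorously, use the Lipschitz function $\mu_i(t)=\min_K\rho_{i,K}(t)$. Split $\rho_{i,L}=\rho_{i,K^*}+(\rho_{i,L}-\rho_{i,K^*})$ in the convective term as well as the viscous one. This gives
\[
\frac{d}{dt}\rho_{i,K^*}=-\rho_{i,K^*}\bigl(\widetilde{\operatorname{div}}_h\vel_h\bigr)_{K^*}+\frac{1}{2h}\sum_{L\in\mathcal{N}(K^*)}\bigl(\lambda_{K^*L}-\vel_L\cdot\mathbf{n}_{K^*L}\bigr)\bigl(\rho_{i,L}-\rho_{i,K^*}\bigr).
\]
The whole viscous part, sound speed included, now multiplies $\rho_{i,L}-\rho_{i,K^*}\ge 0$ with coefficient $\lambda_{K^*L}-\vel_L\cdot\mathbf{n}_{K^*L}\ge 0$, so it can be dropped. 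The remaining damping satisfies $|(\widetilde{\operatorname{div}}_h\vel_h)_{K^*}|\le\frac{N}{h}\|\vel_h(t)\|_{L^\infty}$, which lies in $L^2(0,T)\subset L^1(0,T)$ by hypothesis. Gr\"onwall then gives
\[
\bar\rho_h=\min_{i,K}\rho_{i,K}(0)\exp\bigl(-\tfrac{N}{h}\sqrt{T}\,\|\vel_h\|_{L^2(0,T;L^\infty)}\bigr).
\]
Your positive-coefficient form is a valid positivity argument only under an extra bound $\lambda\in L^1(0,T)$, for example uniform bounds on velocity and temperature. The theorem does not assume this.
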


\begin{proof}
Fix a component $i \in \{i = 1, 2, \ldots, \NVAR\}$ and let $K^*$ be the cell achieving 
the minimum of $\rho_{i,K}(t)$ over all cells at time $t$, so that
\begin{equation}\label{eq:pos_diff}
(\rho_{i,L} - \rho_{i,K^*}) \geq 0 
\qquad \text{for all } L \in \mathcal{N}(K^*).
\end{equation}
The $\rho_i$-component of scheme 
\eqref{eq:semi_scheme}-\eqref{eq:LF_flux_def} at cell $K^*$ reads
\begin{equation}\label{evolutionary_eq}
\frac{d}{dt}\rho_{i,K^*} = -\frac{1}{|K^*|}\sum_{L \in \mathcal{N}(K^*)}
\frac{|S_{K^*L}|}{2}
\Bigl[(\rho_{i,K^*}\vel_{K^*} + \rho_{i,L}\vel_L)\cdot\mathbf{n}_{K^*L} 
- \lambda(\rho_{i,L} - \rho_{i,K^*})\Bigr].
\end{equation}
Decomposing $\rho_{i,L} = \rho_{i,K^*} + (\rho_{i,L} - \rho_{i,K^*})$ in the convective term and substituting into \eqref{evolutionary_eq} 
yields
\begin{equation}\label{eq:AB_split}
\frac{d}{dt}\rho_{i,K^*}
= -{\rho_{i,K^*}
\bigl(\widetilde{\operatorname{div}}_h\,\vel_h\bigr)_{K^*}}
{-\frac{1}{|K^*|}\sum_{L \in \mathcal{N}(K^*)}
\frac{|S_{K^*L}|}{2}
(\rho_{i,L} - \rho_{i,K^*})
\bigl[\vel_L\cdot\mathbf{n}_{K^*L} - \lambda\bigr]},
\end{equation}

By \eqref{eq:global-lambda} and \eqref{eq:pos_diff}, each factor 
$(\rho_{i,L} - \rho_{i,K^*}) \geq 0$ and 
$[\vel_L\cdot\mathbf{n}_{KL} - \lambda] \leq 0$ therefore,
discarding nonnegative contribution from \eqref{eq:AB_split} gives
\[
\frac{d}{dt}\rho_{i,K^*}(t) \geq -C(t)\,\rho_{i,K^*}(t),
\qquad C(t) := \bigl(\widetilde{\operatorname{div}}_h\,\vel_h\bigr)_{K^*}.
\]
Since $C \in L^2\bigl(0,T; L^\infty(\Omega)\bigr)$ as a consequence from  $\vel_h \in L^2\bigl(0,T;\, L^\infty(\Omega)\bigr)$, an application of Grönwall's inequality yields
\[
\rho_{i,K^*}(t) \geq \rho_{i,K^*}(0)
\exp\!\left(-\int_0^t C(\tau)\,d\tau\right) > 0,
\]
Since $K^*$ was the global minimizer, 
$\rho_{i,L}(t) \geq \rho_{i,K^*}(t) > 0$
for all $L \in \mathcal{T}_h$. Applying the argument independently 
to each $i = 1, 2, \ldots, \NVAR$ and summing gives 
$\rho_K = \sum_{i=1}^{\NVAR} \rho_{i,K} > 0$.
\end{proof}

We next show that the discrete pressure and temperature remain 
positive along the evolution. The argument relies on the discrete 
version of renormalized entropy inequality~\eqref{eq:renorm-entropy} together with the positivity of 
the discrete densities established in 
Theorem~\ref{thm:density_positivity}. We first introduce the 
relevant discrete thermodynamic quantities. The cell temperature is recovered from the conservative variables by inverting the energy equation \eqref{perfect_gas_assump_multi}: 
\begin{equation}\label{eq:Tk_def}
\temp_K(t) = \frac{\ener_K(t) - \Big(\dfrac{1}{2}
\dfrac{|\moment_K(t)|^2}{\rho_K(t)} + \sum_{i=1}^{n} \rho_{i,K}(t)\,e_{0i}\Big)}
{\sum_{i=1}^{\NVAR} \rho_{i,K}(t)\,c_{vi}},
\end{equation}
The specific mixture entropy is given by,
\begin{equation}\label{eq:sk_def}
s_K(t) = \frac{1}{\rho_K(t)}\sum_{i=1}^{\NVAR} \rho_{i,K}(t)
\Bigl(c_{vi}\ln\temp_K(t) - r_i\ln\rho_{i,K}(t)\Bigr),
\end{equation}
and the pressure is defined by
\begin{equation}\label{eq:pk_def}
\pressure_K(t) = \sum_{i=1}^{\NVAR} \rho_{i,K}(t)r_i\,\temp_K(t).
\end{equation}

\begin{lemma}\label{pos_pressure}
Under the hypotheses of Theorem~\ref{thm:density_positivity}, 
assume additionally that the initial entropy satisfies
\begin{equation}\label{eq:entropy_bound}
s_K(0) \geq Z_* \qquad \text{for all } K \in \mathcal{T}_h,
\end{equation}
for some $Z_* \in \mathbb{R}$. Then
\[
\temp_K(t) > 0 \qquad \text{and} \qquad \pressure_K(t) > 0
\qquad \text{for all } K \in \mathcal{T}_h,\ t \in [0,T].
\]
\end{lemma}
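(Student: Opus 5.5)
The plan is a discrete minimum principle for the specific entropy, in the spirit of \cite{feireisl2020convergence}. Positivity of $\temp_K$ then follows from a lower bound on $s_K$, and positivity of $\pressure_K$ follows from $\temp_K>0$ together with Theorem~\ref{thm:density_positivity}.

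First, observe that by Theorem~\ref{thm:density_positivity} all $\rho_{i,K}(t)>0$, so \eqref{eq:Tk_def}--\eqref{eq:pk_def} are well defined whenever $\temp_K>0$. Once $\temp_K(t)>0$ is known, we get $\pressure_K=\temp_K\sum_i\rho_{i,K}r_i>0$, since $r_i>0$. It therefore remains to show that $\temp_K$ cannot reach zero. Because the scheme is a Lipschitz ODE system, the numerical solution is continuous in $t$. Let $t^*$ be the first time at which some $\temp_K$ vanishes. On $[0,t^*)$ all entropies are finite and the entropy analysis applies.

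Second, on $[0,t^*)$ we use the renormalized form of the discrete entropy inequality, i.e.\ the analogue of \eqref{disc_entrorpy_inq} for the convex entropy $-\rho\chi(s)$, with $\chi$ nondecreasing and concave. Its proof mirrors Appendix~\ref{app:entropy_dissipation}, using that $-\rho\chi(S)$ is convex in $\con$ for such $\chi$. We choose $\chi(s)=\min(s-Z_*,0)$, suitably smoothed. Then $\sum_K |K|\,\rho_K\chi(s_K)$ is nondecreasing in time, by conservation and summation of the periodic fluxes. It starts at $0$ by \eqref{eq:entropy_bound}, and it is always $\le 0$. Hence it stays $0$, so $s_K(t)\ge Z_*$ for all $K$ and all $t<t^*$. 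Alternatively, one can argue directly at the minimizing cell, as in Theorem~\ref{thm:density_positivity}. Writing the Lax--Friedrichs update as a convex combination of states $\con_K$ and $\con_K\pm \frac{1}{\lambda}\conflux(\con_L)\cdot\mathbf n$, quasiconcavity of $\rho s$ would give the minimum principle. I would try the renormalized-sum argument first.

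Third, convert the entropy bound into a temperature bound. From \eqref{eq:sk_def},
\[
\bar c_{v,K}\ln\temp_K = s_K + \frac{1}{\rho_K}\sum_i \rho_{i,K} r_i\ln\rho_{i,K}\ \ge\ Z_* + \frac{1}{\rho_K}\sum_i\rho_{i,K}r_i\ln\rho_{i,K},
\]
which is exactly \eqref{eq:T-from-eta}. For fixed $h$, Theorem~\ref{thm:density_positivity} gives $\rho_{i,K}\ge\bar\rho_h>0$, and the discrete mass and energy bounds give $\rho_{i,K}\le C_h$. Hence the right-hand side is bounded below uniformly on $[0,t^*)$, and $\temp_K\ge c_h>0$ there. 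This contradicts $\temp_K(t)\to 0$ as $t\to t^*$, so $t^*>T$.

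The main obstacle is the second step. One must justify the renormalized discrete entropy inequality, which requires convexity of $-\rho\chi(S(\con))$ and the Taylor argument with $\lambda_{KL}$ dominating the wave speeds for the mixture. In addition, since $\lambda_{KL}$ varies with the interface, the periodic summation must be handled with care.
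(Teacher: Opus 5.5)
Your proposal is correct and follows essentially the paper's proof: the discrete renormalized entropy inequality \eqref{eq:disc_ren_entropy}, with a cutoff $\chi$ that vanishes on $[Z_*,\infty)$ and is negative below $Z_*$, forces each nonnegative term $-\rho_K\chi(s_K)$ to vanish, so $s_K(t)\ge Z_*$; this becomes the lower bound \eqref{eq:T_lower_bound} on $\temp_K$, and $\pressure_K>0$ then follows from $r_i>0$. Your first-hitting-time argument and your remark that the discrete renormalized inequality itself needs justification (via convexity of $-\rho\chi(S)$ in $\con$) are worthwhile additions: the paper's proof simply asserts \eqref{eq:disc_ren_entropy}, and it uses $s_K(t)$, which contains $\ln\temp_K(t)$, before positivity of $\temp_K$ has been established.
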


\begin{proof}
Rewrite the renormalized entropy inequality~\eqref{eq:renorm-entropy} in discrete form:
\begin{equation}\label{eq:disc_ren_entropy}
\sum_{K \in \mathcal{T}_h} |K|
\Bigl(-\rho_K(\tau)\,\chi\bigl(s_K(\tau)\bigr)\Bigr)
\leq
\sum_{K \in \mathcal{T}_h} |K|
\Bigl(-\rho_K(0)\,\chi\bigl(s_K(0)\bigr)\Bigr),
\end{equation}
for any non-decreasing concave $\chi \leq \bar{\chi}$. Fix $Z_*$ as 
in~\eqref{eq:entropy_bound} and following~\cite{bvrezina2018measure} we now choose a approximation of the cutoff function 
$\chi : \mathbb{R} \to \mathbb{R}$ satisfying
\begin{equation}\label{eq:chi_cutoff}
\chi'(z) \geq 0, \qquad
\chi(z) = \begin{cases} < 0, & z < Z_*, \\ 0, & z \geq Z_*. \end{cases}
\end{equation}
Since \eqref{eq:entropy_bound} implies $s_K(0) \geq Z_*$ for all 
$K \in \mathcal{T}_h$, we have $\chi(s_K(0)) = 0$, therefore:
\begin{equation}\label{eq:sum_eta0_zero}
\sum_{K \in \mathcal{T}_h} |K|
\Bigl(-\rho_K(0)\,\chi\bigl(s_K(0)\bigr)\Bigr) = 0.
\end{equation}
Substituting \eqref{eq:sum_eta0_zero} into \eqref{eq:disc_ren_entropy}:
\begin{equation}\label{eq:sum_eta_t_le0}
\sum_{K \in \mathcal{T}_h} |K|
\Bigl(-\rho_K(t)\,\chi\bigl(s_K(t)\bigr)\Bigr) \leq 0.
\end{equation}
By \eqref{eq:chi_cutoff}, $\chi(s_K(t)) \leq 0$, while 
by~\ref{thm:density_positivity}, $\rho_K(t) > 0$. Hence each term 
$-\rho_K(t)\,\chi(s_K(t)) \geq 0$. Since the sum of 
non-negative terms are bounded above by zero 
in~\eqref{eq:sum_eta_t_le0}, each term must vanish individually, 
giving $\chi(s_K(t)) = 0$ for all $K \in \mathcal{T}_h$ and 
$t \in [0,T]$. By~\eqref{eq:chi_cutoff}, this implies
\begin{equation}\label{eq:s_lower_bound_all_t}
s_K(t) \geq Z_* \qquad 
\text{for all } K \in \mathcal{T}_h,\ t \in [0,T].
\end{equation}

Rewriting \eqref{eq:sk_def} as
\begin{equation}\label{eq:s_expand}
s_K(t) = \bar{c}_{v,K}(t)\,\ln\temp_K(t)
- \frac{1}{\rho_K(t)}\sum_{i=1}^{\NVAR} 
\rho_{i,K}(t)\,r_i\ln\rho_{i,K}(t),
\end{equation}
where
\[
\bar{c}_{v,K}(t) = 
\frac{\sum_{i=1}^{\NVAR} \rho_{i,K}(t)\,c_{vi}}{\rho_K(t)} > 0,
\]
and combining with \eqref{eq:s_lower_bound_all_t}, we obtain
\[
\bar{c}_{v,K}(t)\,\ln\temp_K(t) \geq Z_* 
+ \frac{1}{\rho_K(t)}\sum_{i=1}^{\NVAR} 
\rho_{i,K}(t)\,r_i\ln\rho_{i,K}(t).
\]
Dividing by $\bar{c}_{v,K}(t) > 0$ and exponentiating yields
\begin{equation}\label{eq:T_lower_bound}
\temp_K(t) \geq \exp\!\left(\frac{1}{\bar{c}_{v,K}(t)}
\left[Z_* + \frac{1}{\rho_K(t)}\sum_{i=1}^{\NVAR} 
\rho_{i,K}(t)\,r_i\ln\rho_{i,K}(t)\right]\right) > 0.
\end{equation}

From \eqref{eq:pk_def} and $r_i > 0$ we have 
$\sum_{i=1}^{\NVAR} \rho_{i,K}(t)r_i> 0$, which together 
with \eqref{eq:T_lower_bound} gives $\pressure_K(t) > 0$ for 
all $K \in \mathcal{T}_h$ and $t \in [0,T]$.
\end{proof}
Theorem~\ref{thm:density_positivity} and 
Lemma~\ref{pos_pressure} establish positivity of density and pressure for each mesh size $h$. 
For convergence analysis, these bounds must be 
uniform in $h$, which requires the following assumption:
\begin{assumption}\label{as_1}
We assume that there exist two positive constants 
$\underline{\varrho}$ and $\bar{\mathcal{E}}$ such that
\begin{equation}\label{assump}
\quad
0< \bar{\varrho} \leq \varrho_{i,h}(t)
\qquad
\mathcal{E}_h(t) \le \bar{\mathcal{E}}
\quad \text{uniformly for } h \to 0 .
\end{equation}
\end{assumption}
The physical meaning of this assumption is that no vacuum appears. The
second assumption in \eqref{assump} implies as well then that the speed $|\mathbf{u}_h|$ is bounded since
\[
|\mathbf{u}_h|^2 \le \frac{2\mathcal{E}_h}{\rho_h}
\le \frac{2 \bar{\mathcal{E}}}{\bar{\rho}}
< C.
\]
This assumption implies that the density is
also bounded from above and the energy is bounded from below.
Consequently, the pressure and temperature are bounded from above
and below as well, cf. \cite{yuan2023, lukavcova2023convergence}. Note that Assumption \ref{as_1} has been made up to this point in every convergence analysis for the single fluid Euler case, see \cite{maria2025} for a summary. 


Next, we derive a priori bounds on the discrete solution, 
following~\cite{feireisl2020convergence}. By 
Assumption~\ref{as_1}, these bounds will be uniform in $h$. Since the 
$\rho_i$-component of scheme 
\eqref{eq:semi_scheme}-\eqref{eq:LF_flux_def} is in 
conservative form, summing over all cells gives exact mass 
conservation. Together with $\rho_{i,h} > 0$ from 
Theorem~\ref{thm:density_positivity}, this yields
\[
\|\rho_{i,h}\|_{L^\infty(0,T;\,L^1(\Omega))} 
= \|\rho_{i,h}(0)\|_{L^1(\Omega)} \leq C, 
\qquad i = 1, 2, \ldots, \NVAR.
\]
similarly, summing the $\ener$-component of 
\eqref{eq:semi_scheme}-\eqref{eq:LF_flux_def} over all cells 
and using the conservative structure under periodic boundary 
conditions give:
\begin{equation}\label{ener_bound}
\|\ener_h\|_{L^\infty(0,T;\,L^1(\Omega))} 
= \|\ener_h(0)\|_{L^1(\Omega)} \leq C.
\end{equation}
By recalling the pressure law~\eqref{eq:pk_def} and noting that $\sum_i \rho_i c_{vi}\temp \leq \ener$ with $r_i = (\gamma_i - 1)c_{vi}$, we obtain
\[
\pressure = \sum_{i=1}^{\NVAR} (\gamma_i - 1)\rho_i c_{vi}\temp 
\leq (\gamma_{\max} - 1)\sum_{i=1}^{\NVAR} \rho_i c_{vi}\temp 
\leq (\gamma_{\max} - 1)\ener_K,
\]
where $\gamma_{\max} = \max\{\gamma_1, \gamma_2,\ldots, \gamma_\NVAR\}$. Integrating over 
$\Omega$ and applying \eqref{ener_bound} yields:
\begin{equation}\label{pressure_bound}
\|\pressure_h\|_{L^\infty(0,T;\,L^1(\Omega))} 
\leq (\gamma_{\max} - 1)\|\ener_h\|_{L^\infty(0,T;\,L^1(\Omega))} 
\leq C.
\end{equation}

\begin{lemma}\label{lem:mix_rho_bound}
Let $(\rho_{1,h}, \rho_{2,h},\ldots, \rho_{\NVAR,h}, \moment_h, \ener_h)$ be the 
discrete solution generated by scheme~\eqref{eq:semi_scheme}-\eqref{eq:LF_flux_def}. 
Under the hypotheses of Lemma~\ref{pos_pressure}, with 
$\gamma_{\min} = \min\{\gamma_1, \gamma_2, \ldots, \gamma_\NVAR\} > 1$, the following 
bounds hold uniformly in $h$ and $t \in [0,T]$:
\begin{align}
\rho_h &\in L^\infty(0,T;\,L^{\gamma_{\min}}(\Omega)),
\label{rho_bound}\\
\moment_h &\in L^\infty(0,T;\,L^k(\Omega;\mathbb{R}^N)),
\qquad k = \frac{2\gamma_{\min}}{\gamma_{\min}+1} > 1.
\label{moment_bound}
\end{align}
\end{lemma}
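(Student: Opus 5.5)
Both bounds will be derived from the two pointwise-in-time controls already available: conservation of total energy \eqref{ener_bound} and the uniform lower bound on the specific entropy $s_K(t)\ge Z_*$ obtained in \eqref{eq:s_lower_bound_all_t} in the proof of Lemma~\ref{pos_pressure}. For a single polytropic gas, the entropy lower bound gives $\rho^{\gamma}\lesssim \rho e$, and hence $\rho\in L^\infty L^\gamma$. The plan is to reproduce this cellwise for the mixture and then get the momentum bound by Hölder's inequality.

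\textbf{Step 1 (density bound).} Fix a cell $K$ and time $t$. From \eqref{eq:s_expand} and $s_K\ge Z_*$ we have, as in \eqref{eq:T_lower_bound},
\[
\bar c_{v,K}\ln\temp_K \ge Z_* + \frac{1}{\rho_K}\sum_{i=1}^{\NVAR}\rho_{i,K}r_i\ln\rho_{i,K}.
\]
Write $\rho_{i,K}=Y_{i,K}\rho_K$. The term $\sum_i Y_i r_i\ln Y_i$ is bounded below by $-\max_i r_i\,\ln \NVAR$, so it can be absorbed into the constant. This leaves
\[
\ln\temp_K \ge \frac{\bar r_K}{\bar c_{v,K}}\ln\rho_K - C, \qquad \bar r_K=\frac{\sum_i\rho_{i,K}r_i}{\rho_K}.
\]
Since $\bar r_K/\bar c_{v,K}=\gamma_{\mathrm{mix},K}-1\in[\gamma_{\min}-1,\gamma_{\max}-1]$, on cells with $\rho_K\ge 1$ we obtain $\temp_K\ge c\,\rho_K^{\gamma_{\min}-1}$, and therefore
\[
\rho_K^{\gamma_{\min}}\le C\rho_K\temp_K\le C\sum_i\rho_{i,K}c_{vi}\temp_K\le C\big(\ener_K+C\rho_K\big),
\]
where the last term accounts for the reference energies $e_{0i}$. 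On cells with $\rho_K<1$ we simply have $\rho_K^{\gamma_{\min}}\le 1$. Multiplying by $|K|$, summing over $K$, and using \eqref{ener_bound} together with mass conservation gives $\|\rho_h(t)\|_{L^{\gamma_{\min}}}^{\gamma_{\min}}\le C$ uniformly in $t$ and $h$, which is \eqref{rho_bound}.

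\textbf{Step 2 (momentum bound).} Write $|\moment_h|=\sqrt{\rho_h}\,\bigl(|\moment_h|/\sqrt{\rho_h}\bigr)$. The kinetic energy $\frac{|\moment_h|^2}{2\rho_h}$ is bounded in $L^\infty L^1$ by \eqref{ener_bound}, since the internal energy is nonnegative up to the constant $e_{0i}$ shift (using $\temp_K>0$ from Lemma~\ref{pos_pressure}). Hölder's inequality with exponents $2$ and $2\gamma_{\min}$, i.e. $\frac1k=\frac12+\frac{1}{2\gamma_{\min}}$, then gives
\[
\|\moment_h\|_{L^k}\le \bigl\|\sqrt{\rho_h}\bigr\|_{L^{2\gamma_{\min}}}\Bigl\|\tfrac{\moment_h}{\sqrt{\rho_h}}\Bigr\|_{L^2}=\|\rho_h\|_{L^{\gamma_{\min}}}^{1/2}\Bigl\|\tfrac{\moment_h}{\sqrt{\rho_h}}\Bigr\|_{L^2}\le C,
\]
with $k=2\gamma_{\min}/(\gamma_{\min}+1)>1$, which is \eqref{moment_bound}.

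\textbf{Main obstacle.} The delicate point is Step 1: handling the mixture entropy, where the effective exponent $\gamma_{\mathrm{mix}}$ varies from cell to cell. Two ingredients make it work: the mixing-entropy term is bounded, and $\rho^{\gamma_{\mathrm{mix}}-1}\ge\rho^{\gamma_{\min}-1}$ when $\rho\ge1$. The sign of the reference energies $e_{0i}$ also needs care; this is handled by absorbing them into the $C\rho_K$ term, which is controlled by mass conservation.
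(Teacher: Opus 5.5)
Your proof is correct and follows essentially the same route as the paper. The entropy lower bound $s_K\ge Z_*$, with the mixing term bounded, gives a power-law lower bound ($\temp_K\gtrsim\rho_K^{\gamma_{\min}-1}$ in your version, $\pressure\ge c\rho^{\gamma(\mathbf{Y})}$ in the paper's). The split $\rho_K\ge 1$ versus $\rho_K<1$ together with energy and mass conservation then yields \eqref{rho_bound}, and the same H\"older splitting $\moment_h=\sqrt{\rho_h}\,(\moment_h/\sqrt{\rho_h})$ yields \eqref{moment_bound}. Working with $\temp_K$ and the internal energy instead of the pressure bound \eqref{pressure_bound} has one small advantage: you handle the reference energies $e_{0i}$ explicitly through the $C\rho_K$ term, whereas the paper's inequality $\sum_i\rho_i c_{vi}\temp\le\ener$ tacitly assumes they are nonnegative.
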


\begin{proof}
Fix a cell $K$ and time $t$ and omit the indices $(K,t)$ for 
readability. 

Multiplying \eqref{eq:sk_def} by $\rho$ and substituting 
$\ln\temp = \ln\pressure - \ln\rho - \ln(\sum_i Y_i r_i)$
from the pressure law, together with 
$\ln\rho_i = \ln\rho + \ln Y_i$ and the bound 
$y\ln y \geq -1/e$ for $y \in (0,1]$, gives
\begin{equation}\label{eq:key_collect}
\rho s \leq \rho\,c_v(\mathbf{Y})\ln\pressure 
- \rho\,c_p(\mathbf{Y})\ln\rho + C\rho,
\end{equation}
where $c_v(\mathbf{Y}) = \sum_i Y_i c_{vi}$ and 
$c_p(\mathbf{Y}) = \sum_i Y_i c_{pi}$, and $C$ absorbs the bounded remainder terms 
 $-c_v(\mathbf{Y})\ln r(\mathbf{Y}) - \sum_i Y_i r_i \ln Y_i$. 
Since $\rho Z_* \leq \rho s$ from \eqref{eq:s_lower_bound_all_t}, 
combining with \eqref{eq:key_collect} and 
dividing by $\rho\,c_v(\mathbf{Y}) > 0$, then exponentiating gives:
\[
\pressure \geq c\,\rho^{\gamma(\mathbf{Y})}, 
\qquad \gamma(\mathbf{Y}) := \frac{c_p(\mathbf{Y})}{c_v(\mathbf{Y})} 
= \sum_{i=1}^{\NVAR} w_i\gamma_i \geq \gamma_{\min},
\]
where $w_i = Y_i c_{vi}/\sum_j Y_j c_{vj} \geq 0$ with 
$\sum_{i=1}^{\NVAR} w_i = 1$, and $c = \exp\bigl((Z_*-C)/c_v(\mathbf{Y})\bigr) > 0$. Hence for all $\rho > 0$,
\[
\rho^{\gamma_{\min}} \leq 1 + C\,\pressure.
\]
Integrating over $\Omega$ and applying \eqref{pressure_bound} 
gives \eqref{rho_bound}.

\noindent\textit{Momentum bound.}
By H\"{o}lder's inequality with exponents 
$p = \frac{2}{2-k}$, $q = \frac{2}{k}$,
\[
\|\moment_h\|_{L^k(\Omega)}
\leq
\|\rho_h\|_{L^{\frac{k}{2-k}}(\Omega)}^{1/2}
\|\sqrt{\rho_h}\,\vel_h\|_{L^2(\Omega)}.
\]
Choosing $k = \frac{2\gamma_{\min}}{\gamma_{\min}+1}$ so that 
$\frac{k}{2-k} = \gamma_{\min}$, and using \eqref{rho_bound} 
together with 
$\sqrt{\rho_h}\,\vel_h \in L^\infty(0,T;\,L^2(\Omega))$ 
from the energy bound \eqref{ener_bound}, yields 
\eqref{moment_bound}. Note $k > 1$ iff $\gamma_{\min} > 1$.
\end{proof}

Together with the weak BV estimate~\eqref{BV_bound}, the bounds \eqref{rho_bound}-\eqref{moment_bound} enable us to establish convergence later on.

\section{Consistency}\label{sec:consistency}

We prove that the semi-discrete FV scheme
\eqref{eq:semi_scheme}--\eqref{eq:LF_flux_def} yields a consistent
approximation of the multicomponent Euler system
\eqref{eq:multi_component_euler}, following the consistency framework
of~\cite{abgrall2022convergence, feireisl2020convergence, kuzmin2025consistency}. 
For the numerical solution $\con_h = (\rho_{1,h},\ldots,\rho_{\NVAR,h},\moment_h,\ener_h)$,
we show that the consistency residual $\mathcal{R}^h$ defined by
\begin{equation}\label{eq:consist_goal}
\left[\int_\Omega \con_h\cdot\varphi\,dx\right]_{t=0}^{t=\tau}
=\int_0^\tau\!\int_\Omega
\bigl[\partial_t\varphi\cdot\con_h
+\mathbf{f}(\con_h):\nabla_x\varphi\bigr]dx\,dt
+\int_0^\tau \mathcal{R}^h(t,\varphi)\,dt
\end{equation}
holds for all $\varphi\in C^2([0,T]\times\overline{\Omega})$ where the error $\mathcal{R}^h\to 0$ as $h\to 0$. 
Note that the consistency of the total energy (see Definition~\ref{def:DW}, energy inequality) follows directly from the global conservation property of the scheme. Therefore, in the subsequent analysis, we focus only on the species densities $\rho_i$ and the momentum $\moment$. However, establishing the consistency estimates requires the weak BV estimate (Proposition~\ref{prop:weak_BV}), which is based on entropy dissipation. Therefore, we derive the consistency formulation for $\con_h = (\rho_{1,h}, \ldots, \rho_{\NVAR,h}, \moment_h, \eta_h).$

The analysis is independent of the specific choice of numerical flux, such as local or global Lax-Friedrichs. 
\subsection{Consistency error decomposition}\label{subsec:error_decomp}

We derive the error $\mathcal{R}^h$ explicitly, following the structure of~\cite{abgrall2022convergence,kuzmin2025consistency}. For any
$\varphi\in C^2([0,T]\times\overline{\Omega})$, we begin with the identity
\begin{equation}\label{eq:start}
\left[\int_\Omega\con_h\,\varphi\,dx\right]_{t=0}^{t=\tau}
=\int_0^\tau\!\int_\Omega\frac{d}{dt}(\con_h\,\varphi)\,dx\,dt
=\int_0^\tau\!\int_\Omega\bigl[\con_h\,\partial_t\varphi
+\varphi\,\partial_t\con_h\bigr]dx\,dt.
\end{equation}
To evaluate $\int_\Omega\varphi\,\partial_t\con_h\,dx$, we introduce the cell-average projection~\eqref{eq:projection} and decompose:
\begin{equation}\label{eq:decomp_phi}
\int_\Omega\varphi\,\partial_t\con_h\,dx
=\int_\Omega\Pi_h\varphi\,\partial_t\con_h\,dx
+\underbrace{\int_\Omega(\varphi-\Pi_h\varphi)\,\partial_t\con_h\,dx}_{=:\,\mathrm{II}}.
\end{equation}
Substituting the semi-discrete scheme~\eqref{eq:semi_scheme} into the
first integral of \eqref{eq:decomp_phi}:
\begin{equation}\label{eq:sub_scheme}
\int_\Omega\Pi_h\varphi\,\partial_t\con_h\,dx
=\sum_{K\in\mathcal{T}_h}|K|\,(\Pi_h\varphi)_K\,\partial_t\con_K
=-\sum_{K\in\mathcal{T}_h}(\Pi_h\varphi)_K
\sum_{L\in\mathcal{N}(K)}|S_{KL}|\,\numflux_{KL}.
\end{equation}
Applying summation-by-parts under periodic boundary conditions
(using $\numflux_{LK}=-\numflux_{KL}$):
\begin{equation}\label{eq:SBP}
-\sum_{K}(\Pi_h\varphi)_K\sum_{L\in\mathcal{N}(K)}|S_{KL}|\numflux_{KL}
=\sum_{KL\in\mathcal{E}_h}|S_{KL}|\,\numflux_{KL}\cdot
\bigl[(\Pi_h\varphi)_L-(\Pi_h\varphi)_K\bigr].
\end{equation}
Substituting \eqref{eq:LF_flux_def} into \eqref{eq:SBP}, the diffusion contribution defines term~$\mathrm{I}$:
\begin{equation}\label{eq:termI_def}
\mathrm{I}:=-\sum_{KL\in\mathcal{E}_h}|S_{KL}|\,
\frac{\lambda_{KL}}{2}\,(\con_L-\con_K)
\cdot\bigl[(\Pi_h\varphi)_L-(\Pi_h\varphi)_K\bigr].
\end{equation}
For the central flux, a Taylor expansion of $(\Pi_h\varphi)_L-(\Pi_h\varphi)_K$
recovers $-\int_\Omega \conflux(\con_h):\nabla_x(\Pi_h\varphi)\,dx$ plus a remainder
absorbed into $\mathrm{III}$.

Writing $\nabla_x\varphi = \nabla_x(\Pi_h\varphi)
+ \nabla_x(\varphi-\Pi_h\varphi)$, we define
\begin{equation}\label{eq:termIII_def}
\mathrm{III}:=\int_\Omega(\Pi_h\varphi-\varphi):\conflux(\con_h)\,dx.
\end{equation}
Collecting all contributions into \eqref{eq:start}, we obtain
\eqref{eq:consist_goal} with the consistency error:
\begin{align}\label{eq:error_decomp}
\mathcal{R}^h(t,\varphi)
&=\underbrace{-\sum_{KL\in\mathcal{E}_h}|S_{KL}|\,
  \frac{\lambda_{KL}}{2}(\con_L-\con_K)
  \cdot\bigl[(\Pi_h\varphi)_L-(\Pi_h\varphi)_K\bigr]}_{\displaystyle\mathrm{I}}\nonumber\\
&\qquad\qquad+\underbrace{\Bigl[\int_\Omega\con_h(\varphi-\Pi_h\varphi)\,dx
  \Bigr]_{t=0}^{t=\tau}}_{\displaystyle\mathrm{II}}
+\underbrace{\int_\Omega(\Pi_h\varphi-\varphi):\conflux(\con_h)\,dx}_{\displaystyle\mathrm{III}}.
\end{align}
The preceding analysis shows that Terms~$\mathrm{(II)}$ and $\mathrm{(III)}$ tend to zero as $h\to 0$ for bounded sequences $\con_h$. The remaining challenge is to control Term~$\mathrm{I}$, which we address in what follows by demonstrating its convergence to zero. 

The mesh is a uniform Cartesian, neighbouring cell centres satisfy
$|x_K - x_L| \leq Ch$, so
\begin{equation}\label{eq:proj_diff_bound}
\bigl|(\Pi_h\varphi)_L - (\Pi_h\varphi)_K\bigr|
\leq Ch\,\|\nabla\varphi\|_{C([0,T]\times\Omega)},
\end{equation}
and therefore
\begin{equation}\label{eq:termI_bound}
\int_0^T|\mathrm{I}(t)|\,dt
\leq
C\,\|\nabla\varphi\|_{C([0,T]\times\Omega)}
\int_0^T\!\sum_{KL\in\mathcal{E}_h}
\lambda_{KL}\,|\con_L-\con_K|\,h^N\,dt
\;\longrightarrow\; 0
\qquad\text{as }h\to 0^+,
\end{equation}
where the convergence to zero follows from the weak BV
estimate~\eqref{BV_bound}. Hence term~$(\mathrm{I})$
vanishes in the limit and the Lax-Friedrichs diffusion
does not contribute to the limiting integral identities
of Definition~\ref{def:DW}.

\subsection{Entropy inequality consistency}\label{subsec:ent_consist}

For the entropy component $\ent_h$, the scheme satisfies a discrete entropy inequality, tested only against $\varphi\geq 0$:
\begin{equation}\label{eq:disc_ent}
|K|\frac{d\ent_K}{dt}
+\sum_{L\in\mathcal{N}(K)}|S_{KL}|\,\hat{\entf}_{KL}\leq 0.
\end{equation}
Applying the decomposition \eqref{eq:error_decomp} to \eqref{eq:disc_ent}
with $\varphi\geq 0$ yields an error
$\mathcal{R}^h_\ent=\mathrm{(I)}_\ent+\mathrm{(II)}_\ent+\mathrm{(III)}_\ent
+\mathrm{(IV)}_\ent$,
where terms $(\mathrm{I})_\ent$--$(\mathrm{III})_\ent$ are as before.
The additional term $\mathrm{(IV)}_\ent$ arises from replacing the
numerical entropy flux $\hat{\entf}_{KL}$ by the physical flux
$\entf(\con_K)=-\ent_K\vel_K$:
\begin{align*}
\mathrm{IV}_\ent 
&= \int_0^\tau \sum_{KL\in\mathcal{E}_h} |S_{KL}|
   \bigl[\hat{\entf}_{KL} - \entf(\con_K)\cdot\mathbf{n}_{KL}\bigr]
   (\Pi_h\varphi)_K \, dt
\end{align*}
Next, we suppose that 
the numerical entropy
flux is Lipschitz, meaning that 
\begin{equation}\label{eq:lip_ent_flux}
\bigl|\hat{\entf}_{KL}(\con_K,\con_L)-\entf(\con_K)\bigr|
\leq C_L\,|\con_K-\con_L|
\qquad\forall\,K,L\in\mathcal{E}_h,
\end{equation}
holds. Therefore, we
obtain, 
\begin{equation}\label{eq:IV_bound}
|\mathrm{IV}_\ent|
\leq C_L\,\|\varphi\|_{C([0,T]\times\Omega)}
\int_0^T\sum_{KL\in\mathcal{E}_h}
\lambda_{KL}\,|\con_L-\con_K|\,h^N\,dt
\;\longrightarrow\; 0
\end{equation}
together with the BV estimation ~\eqref{BV_bound}. 
All four terms 
($\mathrm{I})_\ent$-$(\mathrm{IV})_\ent$ vanish as $h\to 0$, yielding the entropy consistency. 
To summarize, we can state the following:
\begin{theorem}[Consistency]\label{thm:consistency}
Let $(\rho_{1,h},\ldots,\rho_{\NVAR,h},\moment_h,\ent_h)_{h>0}$ 
be a family of discrete solutions generated by the scheme 
\eqref{eq:semi_scheme}-\eqref{eq:LF_flux_def}, satisfying 
Assumption~\ref{as_1}, the weak BV estimate~\eqref{BV_bound}, 
and Lipschitz continuity of the numerical entropy flux.

Then, for every $\tau \in (0,T)$, the following holds:
\begin{enumerate}

\item \emph{(Continuity, each species)} For each $i=1,\ldots,\NVAR$
and all $\varphi\in C^2([0,T]\times\overline{\Omega})$:
\begin{equation}\label{eq:consist_rho}
\left[\int_\Omega\rho_{i,h}\,\varphi\,dx\right]_{t=0}^{t=\tau}
=\int_0^\tau\!\int_\Omega
\bigl[\rho_{i,h}\,\partial_t\varphi
+\rho_{i,h}\vel_h\cdot\nabla_x\varphi\bigr]dx\,dt
+\int_0^\tau \mathcal{R}^h_{\rho_i}(t,\varphi)\,dt;
\end{equation}

\item \emph{(Momentum)} For all
$\boldsymbol{\phi}\in C^2([0,T]\times\overline{\Omega};\mathbb{R}^N)$:
\begin{align}\label{eq:consist_m}
\left[\int_\Omega\moment_h\cdot\boldsymbol{\phi}\,dx\right]_{t=0}^{t=\tau}
&=\int_0^\tau\!\int_\Omega
\Bigl[\moment_h\cdot\partial_t\boldsymbol{\phi}
+\frac{\moment_h\otimes\moment_h}{\rho_h}:\nabla_x\boldsymbol{\phi}\nonumber\\
\qquad \qquad &+\pressure(\rho_{1,h},\ldots,\rho_{\NVAR,h},\ent_h)\,
\mathrm{div}_x\boldsymbol{\phi}\Bigr]dx\,dt
+\int_0^\tau \mathcal{R}^h_{\moment}(t,\boldsymbol{\phi})\,dt;
\end{align}

\item \emph{(Entropy inequality)} For all
$\varphi\in C^2([0,T]\times\overline{\Omega})$, $\varphi\geq 0$:
\begin{equation}\label{eq:consist_eta}
\left[\int_\Omega\ent_h\,\varphi\,dx\right]_{t=0}^{t=\tau}
\leq\int_0^\tau\!\int_\Omega
\bigl[\ent_h\,\partial_t\varphi
+(\ent_h\vel_h)\cdot\nabla_x\varphi\bigr]dx\,dt
+\int_0^\tau \mathcal{R}^h_\ent(t,\varphi)\,dt;
\end{equation}

\item \emph{(Energy conservation)}
\begin{equation}\label{eq:consist_E}
\int_\Omega \ener_h(\tau)\,dx=\int_\Omega \ener_h(0)\,dx;
\end{equation}

\item \emph{(Vanishing error)} All consistency errors tend to zero:
\begin{equation}\label{eq:errors_to_zero}
\|\mathcal{R}^h_j\|_{L^1(0,T)}\to 0
\quad\text{as }h\to 0,
\qquad j\in\bigl\{\rho_1,\ldots,\rho_\NVAR,\,\moment,\,\ent\bigr\}.
\end{equation}

\end{enumerate}
\end{theorem}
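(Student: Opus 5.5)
The proof assembles the error decomposition of Section~\ref{subsec:error_decomp} componentwise. For each conserved variable (every $\rho_i$ and $\moment$), I will start from the identity \eqref{eq:start}, split $\varphi=\Pi_h\varphi+(\varphi-\Pi_h\varphi)$ as in \eqref{eq:decomp_phi}, and insert the scheme \eqref{eq:semi_scheme}. Summation by parts \eqref{eq:SBP} then uses the conservativity $\numflux_{LK}=-\numflux_{KL}$ and periodicity. Splitting the Lax--Friedrichs flux \eqref{eq:LF_flux_def} into its central part and its diffusive part gives exactly the representation \eqref{eq:consist_goal}, with $\mathcal{R}^h=\mathrm{I}+\mathrm{II}+\mathrm{III}$ as in \eqref{eq:error_decomp}.

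For the central part I will use a discrete Gauss identity. On a uniform Cartesian mesh, $\sum_{KL}|S_{KL}|\tfrac{\conflux_K+\conflux_L}{2}\cdot\mathbf{n}_{KL}[(\Pi_h\varphi)_L-(\Pi_h\varphi)_K]$ is rewritten cellwise as $\int_\Omega \conflux(\con_h):\nabla_x\varphi\,dx$ plus a remainder. That remainder is bounded by $Ch\|\varphi\|_{C^2}\|\conflux(\con_h)\|_{L^1}$, using a second-order Taylor expansion of $\varphi$ around face centres.

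Item~4 (energy) is immediate: sum the $\ener$-component of \eqref{eq:semi_scheme} over $K$, and the fluxes cancel by conservativity and periodicity.

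For item~3 (entropy), I will repeat the computation starting from the cellwise inequality \eqref{disc_entrorpy_inq}, multiplied by $(\Pi_h\varphi)_K\ge0$. Positivity of $\varphi$ preserves the inequality sign. Replacing $\hat\entf_{KL}$ by the physical flux produces the extra term $\mathrm{IV}_\ent$, which is controlled by \eqref{eq:lip_ent_flux} and \eqref{BV_bound} as in \eqref{eq:IV_bound}.

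The main work is item~5, showing that each error term vanishes in $L^1(0,T)$.

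\begin{itemize}
\item \textbf{Term $\mathrm{I}$.} This follows from \eqref{eq:proj_diff_bound}, $|S_{KL}|h=h^N$, and the weak BV estimate \eqref{BV_bound}, exactly as in \eqref{eq:termI_bound}.
\item \textbf{Term $\mathrm{II}$.} It is bounded by $\|\varphi-\Pi_h\varphi\|_{L^\infty}\|\con_h\|_{L^\infty(0,T;L^1)}\le Ch\|\nabla\varphi\|_\infty$, using mass and energy conservation and Lemma~\ref{lem:mix_rho_bound}.
\item \textbf{Term $\mathrm{III}$ and the central-flux remainder.} These need $\conflux(\con_h)$ bounded in $L^\infty(0,T;L^1)$. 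Assumption~\ref{as_1} supplies this: it bounds $\rho_h$ and $\vel_h$ uniformly, so $\moment_h\otimes\moment_h/\rho_h\le 2\ener_h$. Lemma~\ref{pos_pressure} together with \eqref{pressure_bound} bounds $\pressure_h$, and $|\ent_h\vel_h|$ is bounded because $s_h\ge Z_*$ and the upper bounds on $\temp_h$ and $\rho_{i,h}$ make $\ent_h$ bounded.
\end{itemize}

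The step I expect to be the main obstacle is the central-flux remainder for the nonlinear fluxes, which needs a careful face-wise Taylor argument. If the uniform $L^\infty$ bounds from Assumption~\ref{as_1} are used, however, every flux is uniformly bounded and all remainders are $O(h)$. The only genuinely non-quantitative piece is then Term~$\mathrm{I}$ (and $\mathrm{IV}_\ent$), which relies on \eqref{BV_bound}.
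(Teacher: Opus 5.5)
Your proposal is correct and follows essentially the same route as the paper. Both arguments decompose $\mathcal{R}^h$ into terms $\mathrm{I}$--$\mathrm{III}$, plus $\mathrm{IV}_\ent$ for the entropy; $\mathrm{I}$ and $\mathrm{IV}_\ent$ are controlled by the weak BV estimate \eqref{BV_bound}, $\mathrm{II}$ and $\mathrm{III}$ by the uniform a priori bounds, and energy conservation follows from flux cancellation under periodicity.

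Your face-wise Taylor treatment of the central-flux remainder, giving $O(h)\|\varphi\|_{C^2}\|\conflux(\con_h)\|_{L^1}$, makes explicit what the paper only sketches. Term $\mathrm{II}$ in fact vanishes identically, since $\con_h$ is cellwise constant and $\int_K(\varphi-\Pi_h\varphi)\,dx=0$.
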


\begin{proof}
For completeness, we summarize the results from before: 
The consistency errors $\mathcal{R}^h_j$ for $j\in\{\rho_1,\ldots,
\rho_\NVAR,\,\moment,\,\ent\}$ are bounded by terms
$\mathrm{(I)}$--$\mathrm{(III)}$ of decomposition
\eqref{eq:error_decomp}, all of which vanish as $h\to 0$,
terms $\mathrm{(II)}$ and $\mathrm{(III)}$ by the a priori stability
bounds, and term $\mathrm{(I)}$
by the weak BV estimate \eqref{BV_bound}. For the entropy component, the
additional term $\mathrm{(IV)}_\ent$ arising from the numerical
entropy flux is controlled by \eqref{eq:lip_ent_flux} and
\eqref{BV_bound}. Energy conservation \eqref{eq:consist_E}
follows from the conservative structure of the scheme under
periodic boundary conditions; see~\cite{abgrall2022convergence, kuzmin2025consistency} for more details.
\end{proof}
\section{Convergence of the FV scheme inside the DW framework}\label{sec:lim_process}

We prove that the entropy-stable FV scheme 
\eqref{eq:semi_scheme}-\eqref{eq:LF_flux_def} admits a convergent 
subsequence whose limit is a  solution of the 
multicomponent Euler system in the sense of Definition~\ref{def:DW}.  The analysis follows 
\cite{abgrall2022convergence, feireisl2021numerical,feireisl2020convergence, kuzmin2025consistency,yuan2023, lukavcova2023convergence} and is adapted to the multicomponent setting.
Furthermore, we demonstrate a weak-strong uniqueness result and prove strong convergence to strong solutions as the latter exists. \\
We start by providing the following weak convergence result:

\begin{theorem}[Weak convergence]\label{thm:weak_conv}
Let $\mathbf{U}_h=(\rho_{1,h},\ldots,\rho_{\NVAR,h},\moment_h,\eta_h)_{h>0}$
be a family of numerical solutions generated by the scheme
\eqref{eq:semi_scheme}-\eqref{eq:LF_flux_def}. Furthermore, we assume that Assumption~\ref{as_1} and the hypotheses of 
Lemma~\ref{pos_pressure} holds. Then, there exists a subsequence denoted again by $\mathbf{U}_h$ such that 
\begin{subequations}\label{eq:weak_conv}
\begin{align}
\rho_{i,h} &\overset{*}\rightharpoonup \rho_i
&&\text{weakly-$*$ in }
L^\infty(0,T;\,L^1(\Omega)),
\quad i=1,\ldots,\NVAR,
\label{eq:weak_rho_i}\\
\rho_h &\overset{*}\rightharpoonup \rho
&&\text{weakly-$*$ in }
L^\infty(0,T;\,L^{\gamma_{\min}}(\Omega)),
\label{eq:weak_rho_mix}\\
\moment_h &\overset{*}\rightharpoonup \moment
&&\text{weakly-$*$ in }
L^\infty(0,T;\,L^k(\Omega;\mathbb{R}^N)),
\quad k = \tfrac{2\gamma_{\min}}{\gamma_{\min}+1},
\label{eq:weak_m}\\
\eta_h &\overset{*}\rightharpoonup \eta
&&\text{weakly-$*$ in }
L^\infty(0,T;\,L^1(\Omega)),
\label{eq:weak_eta}
\end{align}
\end{subequations}
as $h\to 0$, where $\gamma_{\min}:=\min\{\gamma_1,\ldots,
\gamma_\NVAR\}>1$. Moreover, the limit
$(\rho_1,\ldots,\rho_\NVAR,\moment,\eta,
\mathcal{V}_{t,x},\mathfrak{R},\mathfrak{E})$
is a DW solution of the multicomponent Euler
system in the sense of Definition~\ref{def:DW}, with initial
data $\mathcal{V}_{0,x}=\delta_{(\rho_1^0(x),\ldots,
\rho_\NVAR^0(x),\moment^0(x),\eta^0(x))}$.
\end{theorem}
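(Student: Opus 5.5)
The plan follows the single-fluid argument of \cite{feireisl2020convergence,feireisl2021numerical}: combine the uniform bounds of Section~\ref{sec:scheme} with the consistency formulation of Theorem~\ref{thm:consistency}, and pass to the limit using Young measures and the weak-$*$ compactness of concentration defects.

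\textbf{Step 1: compactness.} By Assumption~\ref{as_1}, mass conservation, \eqref{ener_bound} and Lemma~\ref{lem:mix_rho_bound}, the families $\rho_{i,h}$, $\rho_h$, $\moment_h$ are bounded in the spaces listed in \eqref{eq:weak_conv}. For $\eta_h$, the lower bound $s_h\ge Z_*$ from \eqref{eq:s_lower_bound_all_t} controls one side. The other side comes from \eqref{eq:key_collect} together with $\ln\pressure\le \pressure$ and $\rho|\ln\rho|\lesssim 1+\rho^{\gamma_{\min}}$, which bounds $|\eta_h|$ in $L^\infty(0,T;L^1)$. Since $L^1$ is not a dual space, I will regard the weak-$*$ limits for $\rho_{i,h},\eta_h$ as limits in $L^\infty(0,T;\mathcal M(\overline\Omega))$. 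The superlinear bounds ($\rho_h\in L^{\gamma_{\min}}$, and the velocity bound from Assumption~\ref{as_1}, which gives $\rho_{i,h},\eta_h$ uniformly in $L^\infty$) then rule out concentrations, so the limits lie in $L^1$.

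Next, the fundamental theorem on Young measures applied to $\con_h$ gives a parametrized measure $\ym_{t,x}$ whose barycenters are the weak limits. The support condition in \eqref{eq:DW-measure} follows because the closed set $\{\tilde\rho_i\ge0,\ \tilde\eta/\tilde\rho\ge Z_*\}$ contains all the $\con_h$.

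Time regularity ($C_{\rm weak}$ for $\rho_i$ and $\moment$) follows from \eqref{eq:consist_rho}--\eqref{eq:consist_m}: these give equicontinuity of $t\mapsto\int\rho_{i,h}\varphi$, so an Arzelà--Ascoli argument plus density of smooth test functions applies. For $\eta$, inequality \eqref{eq:consist_eta} shows that $t\mapsto\int\eta_h\varphi$ is a bounded function plus a monotone one, which gives $BV_{\rm weak}$ via Helly's theorem.

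\textbf{Step 2: limit in the equations.} In \eqref{eq:consist_rho}, the linear terms converge, and $\mathcal R^h\to0$ by \eqref{eq:errors_to_zero}. The flux $\rho_{i,h}\vel_h=(\rho_{i,h}/\rho_h)\moment_h$ is nonlinear. Because Assumption~\ref{as_1} bounds it in $L^\infty$, it converges weakly-$*$ to $\langle\ym;\tilde\rho_i\tilde\moment/\tilde\rho\rangle$.

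\textbf{Step 3: identifying the flux (main obstacle).} To write this flux as $\rho_i\vel$ with $\vel=\moment/\rho$ in \eqref{eq:DW-continuity}, I would try the following. Absorb the discrepancy between $\langle\ym;\tilde\rho_i\tilde\moment/\tilde\rho\rangle$ and $\rho_i\moment/\rho$ into the defect; if the definition does not allow that, fall back to the Young-measure form, in the same way the entropy flux is handled in \eqref{eq:DW-entropy}. I expect this step, together with compatibility with the thermodynamic closure, to be the delicate one.

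For the momentum, write the convective and pressure terms as $\langle\ym;\cdot\rangle$ plus a concentration defect. Here the nonlinearities are only bounded in $L^\infty(0,T;L^1)$, so this is genuinely a defect. Then set
\[
\mathfrak R=\overline{\tfrac{\moment_h\otimes\moment_h}{\rho_h}+\pressure_h\mathbb I}-\Bigl(\tfrac{\moment\otimes\moment}{\rho}+\pressure(\rho_1,\dots,\rho_\NVAR,\eta)\mathbb I\Bigr),
\]
combining the Young-measure oscillation defect with the concentration defect. Positive semidefiniteness comes from the convexity of $[\rho,\moment]\mapsto \moment\otimes\moment/\rho$ tested with $\xi\otimes\xi$. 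It also uses the convexity of $(\rho_1,\dots,\rho_\NVAR,\eta)\mapsto\pressure$ at fixed energy; more precisely, the internal energy $\rho\intener$ is convex in $(\rho_i,\eta)$ and $\pressure=(\gamma(\mathbf Y)-1)\rho\intener$ is controlled by it. Jensen's inequality then gives the sign.

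Define $\mathfrak E$ analogously as the defect of the total energy $E=\tfrac12|\moment|^2/\rho+\rho\intener(\rho_i,\eta)$. The energy inequality \eqref{eq:DW-energy} then follows from \eqref{eq:consist_E}, weak lower semicontinuity of the convex energy, and well-prepared initial data.

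The compatibility \eqref{eq:DW-defect} follows from the pointwise bounds
\[
\min\{2,N(\gamma_{\min}-1)\}\,E\le\mathrm{tr}\Bigl[\tfrac{\moment\otimes\moment}{\rho}+\pressure\mathbb I\Bigr]\le\max\{2,N(\gamma_{\max}-1)\}\,E,
\]
which hold with $\gamma$ replaced by $\gamma(\mathbf Y)\in[\gamma_{\min},\gamma_{\max}]$ and pass to the defects.

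\textbf{Step 4: entropy.} Pass to the limit in \eqref{eq:consist_eta}: $\eta_h\vel_h\rightharpoonup\langle\ym;\tilde\eta\tilde\moment/\tilde\rho\rangle$, which is uniformly integrable by Assumption~\ref{as_1}. This gives \eqref{eq:DW-entropy}, after handling the one-sided limits $\tau_1-,\tau_2+$ through the BV structure. The initial data satisfy $\Pi_h\con^0\to\con^0$ strongly in $L^1$, so $\ym_{0,x}$ is the stated Dirac measure.
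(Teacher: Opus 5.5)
\textbf{Verdict: there is a genuine gap.} Your outline follows the paper's own route: uniform bounds, a Young measure, the limit in the consistency formulation of Theorem~\ref{thm:consistency}, and defects defined as weak-$*$ limits. However, the step you flag in Step~3 is a real gap, and neither of your fallbacks proves the theorem as stated.

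\textbf{The species continuity equations.} Since $\rho_{i,h}\vel_h=(\rho_{i,h}/\rho_h)\moment_h$ is nonlinear in $\con_h$, Assumption~\ref{as_1} only rules out concentrations. The limit of \eqref{eq:consist_rho} is therefore the species identity with flux $\langle\ym;\tilde\rho_i\tilde\moment/\tilde\rho\rangle$, which in general differs from $\rho_i\moment/\rho$. Equality would need something like strong convergence of the mass fractions $\rho_{i,h}/\rho_h$. None of the available estimates (energy, entropy, or the weak BV bound \eqref{BV_bound}) gives compactness of them. Absorbing the difference into a defect is also impossible. \eqref{eq:DW-continuity} has no defect term, and the difference is an unsigned vector field with no relation to $\mathfrak{E}$. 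Only the sum over $i$, whose flux $\moment$ is linear, passes to the limit in the stated form. The paper's proof does not close this either. It asserts that no defect arises ``due to the linear structure'', which holds for the single-fluid continuity equation but not for the species equations when $\NVAR\ge 2$. The consistent repair is your second fallback: in item~3 of Definition~\ref{def:DW}, replace $\rho_i\vel$ by $\langle\ym;\mathbf{1}_{\tilde\rho>0}(\tilde\rho_i/\tilde\rho)\tilde\moment\rangle$. This is exactly how the entropy flux is treated in \eqref{eq:DW-entropy}; mass fractions are transported just like the specific entropy. Your Step~2 then completes this part, but for a weaker notion of solution than the one stated.

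\textbf{The Reynolds defect.} There is a second gap in your treatment of $\mathfrak{R}$.
\begin{itemize}
\item Jensen's inequality gives $\langle\ym;\pressure\rangle\ge\pressure(\rho_1,\ldots,\rho_\NVAR,\ent)$ only if $\pressure$ is itself convex in $(\rho_1,\ldots,\rho_\NVAR,\ent)$. Being dominated by the convex function $\rho\intener$ does not help.
\item Set $F=\moment\otimes\moment/\rho+\pressure\mathbb{I}$. The pointwise bounds $\underline k E\le\mathrm{tr}F\le\bar k E$ pass to concentration defects, but not to oscillation defects $\langle\ym;G\rangle-G(\langle\ym;\tilde\con\rangle)$. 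For those you need convexity of $\mathrm{tr}F-\underline kE$ and of $\bar kE-\mathrm{tr}F$.
\item Under Assumption~\ref{as_1} all defects are oscillation defects, so this is exactly the relevant case.
\end{itemize}
For a single gas both properties come from $\pressure=(\gamma-1)\rho\intener$ with constant $\gamma$. Then $\mathrm{tr}\,\mathfrak{R}=2\mathfrak{E}_{\mathrm{kin}}+N(\gamma-1)\mathfrak{E}_{\mathrm{int}}$, with both parts nonnegative. For a mixture, $\gamma(\mathbf{Y})-1$ is a nonconstant factor, and convexity of the mixture pressure can fail when the $\gamma_i$ are far apart. For instance, take equal $c_{vi}$, $\gamma_1=2$ and $\gamma_2$ close to $1$. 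Then convexity fails along $(\delta\rho_1,\delta\rho_2,\delta\ent)=(1,-1,0)$ at small $\rho_1/\rho$. You would need a quantitative convexity result for $\pressure(\rho_1,\ldots,\rho_\NVAR,\ent)$ on the state set allowed by Assumption~\ref{as_1}, or a modified definition. The paper's proof only cites the single-fluid results at this point, so it does not supply this either.
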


\begin{proof}
\textbf{Weak convergences and Young measure.}
The convergences \eqref{eq:weak_conv} follow from the Banach-Alaoglu theorem, given the uniform stability bounds. By the fundamental theorem of Young measures~\cite{ball2005version},
the bounded family generates a parametrized probability
measure $\mathcal{V}_{t,x}\in\mathcal{P}(\mathbb{F})$
for a.a.\ $(t,x)\in(0,T)\times\Omega$, with phase space
$\mathbb{F}$ as in \eqref{eq:DW-measure}, such that the
weak limits \eqref{eq:weak_conv} are the first moments:
\begin{equation}\label{eq:YM_moments}
\rho_i = \langle\ym_{t,x};\,\tilde\rho_i\rangle,
\quad
\moment = \langle\ym_{t,x};\,\tilde\moment\rangle,
\quad
\ent = \langle\ym_{t,x};\,\tilde\ent\rangle.
\end{equation}

\medskip
\noindent\textbf{Defect measures.}
The nonlinear momentum flux
$\moment_h\otimes\moment_h/\rho_h+\pressure_h\mathbb{I}$,
bounded in the space $L^\infty(0,T;\,L^1(\Omega;\mathbb{R}^{N\times N}))$,
converges weakly-$*$ in
$L^\infty(0,T;\,\mathcal{M}^+(\overline\Omega;
\mathbb{R}^{N\times N}_{\mathrm{sym}}))$, defining the
Reynolds defect $\mathfrak{R}=\mathfrak{R}_{cd}
+\mathfrak{R}_{od}$. The total energy $\ener_h$ converges
weakly-$*$ in $L^\infty(0,T;\,\mathcal{M}^+(\overline\Omega))$,
defining the energy defect
$\mathfrak{E}=\mathfrak{E}_{cd}+\mathfrak{E}_{od}$.
Non-negativity of oscillation defects $\mathfrak{E}_{od}$ and $\mathfrak{R}_{od}$
follows from Jensen's inequality applied to the convex energy
functional; see~\cite[Theorem~5.4]{feireisl2021numerical}.
The defect compatibility
$\underline{k}\,\mathfrak{E}\leq\mathrm{tr}[\mathfrak{R}]
\leq\bar{k}\,\mathfrak{E}$
follows from~\cite[Lemma~2.1]{feireisl2016dissipative}.

\medskip
\noindent\textbf{Verification of Definition~\ref{def:DW}.}
Using the consistency formulation of
Theorem~\ref{thm:consistency} and passing to the limit via
\eqref{eq:YM_moments}:

\smallskip
\noindent\emph{Continuity equations.}
From \eqref{eq:consist_rho},
the DW continuity identity \eqref{eq:DW-continuity} holds
for each $i=1,\ldots,\NVAR$ independently. No concentration
measure arises due to the linear structure.

\smallskip
\noindent\emph{Momentum equation:}
From \eqref{eq:consist_m}, the DW momentum identity
\eqref{eq:DW-momentum} holds with the Reynolds defect
$\mathfrak{R}$.

\smallskip
\noindent\emph{Entropy balance:}
From \eqref{eq:consist_eta}, the DW entropy balance
\eqref{eq:DW-entropy} holds. The Young measure is retained
in the entropy flux $\langle\ym;\,
\mathbf{1}_{\tilde\rho>0}(\tilde\ent/\tilde\rho)
\tilde\moment\rangle$ since the multicomponent entropy
flux is nonlinear in all species.

\smallskip
\noindent\emph{Energy inequality:}
The scheme conserves total energy exactly under periodic
boundary conditions, cf.\ \eqref{eq:consist_E}. Passing
to the limit via weak lower semicontinuity of the convex
energy functional gives \eqref{eq:DW-energy}, with
$\mathfrak{E}(\tau)\geq 0$ absorbing the possible energy
loss in the limit.

\smallskip
All conditions of Definition~\ref{def:DW} are satisfied,
see~\cite{abgrall2022convergence,feireisl2021numerical} for the
complete argument in the single-component case.
\end{proof}




Next, we  exploit the DW structure established in
Theorem~\ref{thm:weak_conv} to prove that the numerical
solutions converge strongly to the strong solution of
\eqref{eq:multi_component_euler} as long as the strong solutions exist. To this end,  we first demonstrate  weak-strong uniqueness by applying the relative
entropy framework \cite{dafermos2001hyperbolic}, adjusted to the multicomponent setting 
following the single component case from ~\cite{feireisl2016dissipative,
feireisl2020convergence}, see also \cite{gwiazda2020}. 

Let $\bar\con = (\bar\rho_1,\ldots,\bar\rho_\NVAR,
\bar\moment,\bar\ent)$ be a strong solution of
\eqref{eq:multi_component_euler} on $[0,T]\times\Omega$
satisfying
\begin{equation}\label{eq:strong_bounds}
\bar\rho_i > 0\quad (i=1,\ldots,\NVAR),\qquad
\bar\temp > 0,\qquad
\|\bar\con\|_{C^1([0,T]\times\Omega)} \leq M < \infty,
\end{equation}
and set $\bar\rho:=\sum_{i=1}^\NVAR\bar\rho_i$ and
$\bar\vel:=\bar\moment/\bar\rho$. The \emph{relative entropy
density} is defined by
\begin{equation}\label{eq:RE_def}
\relent(\con\,|\,\bar\con)
:= \ent(\con) - \ent(\bar\con)
- \nabla_\con\ent(\bar\con)\cdot(\con-\bar\con)
\geq 0,
\end{equation}
with non-negativity following from convexity of $\ent$.
By Taylor's theorem and the convexity of $\ent$ on compact
subsets of $\mathbb{F}$, the relative entropy satisfies the
coercivity estimate
\begin{equation}\label{eq:coercivity}
c_1\,|\con-\bar\con|^2
\leq \relent(\con\,|\,\bar\con)
\leq c_2\,|\con-\bar\con|^2
\end{equation}
for constants $c_1,c_2>0$ depending only on $M$ and a
compact neighbourhood of $\bar\con$. The \emph{total relative
entropy} incorporating the dissipation defect is,
\begin{equation}\label{eq:RE_total}
\mathcal{H}_h(t)
:= \int_\Omega
\langle\ym_{t,x},\,
\relent(\tilde\con\,|\,\bar\con(t,\cdot))\rangle\,dx
+ \mathfrak{E}(t)
\geq 0.
\end{equation}
Both terms are non-negative: the first by \eqref{eq:RE_def},
the second by the DW energy inequality \eqref{eq:DW-energy}.
When $\ym_{t,x}=\delta_{\bar\con(t,x)}$ and $\mathfrak{E}=0$,
we have $\mathcal{H}_h(t)=0$.

\begin{remark}\label{rem:defect_in_RE}
The inclusion of $\mathfrak{E}$ in \eqref{eq:RE_total} is
essential to close the Gr\"{o}nwall argument in the presence
of oscillations and concentrations, without it the energy
defect contribution cannot be absorbed into $\mathcal{H}_h$.
\end{remark}

\begin{proposition}\label{prop:RE}
Let $(\rho_1,\ldots,\rho_\NVAR,\moment,\ent,\ym_{t,x},
\mathfrak{R},\mathfrak{E})$ be the DW solution from
Theorem~\ref{thm:weak_conv}, and let $\bar\con$ satisfy
\eqref{eq:strong_bounds}. Then for a.a.\ $t\in(0,T)$,
\begin{equation}\label{eq:RE_ineq}
\mathcal{H}_h(t)
\leq \mathcal{H}_h(0)
+ \int_0^t C(s)\,\mathcal{H}_h(s)\,ds
+ \int_0^t \mathcal{R}_h(s)\,ds,
\end{equation}
where $C\in L^1(0,T)$ depends on $\|\nabla_x\bar\con\|_{L^\infty}$, $\|\partial_t\bar\con\|_{L^\infty}$, and the Hessians $\|\ent''\|_{L^\infty}$, $\|\conflux''\|_{L^\infty}$. The consistency residual satisfies $\mathcal{R}_h\to 0$ in $L^1(0,T)$ as $h\to 0$.
\end{proposition}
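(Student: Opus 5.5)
We prove \eqref{eq:RE_ineq} with the standard relative entropy method. The strong solution $\bar\con$ supplies test functions for the weak formulations, and all nonlinear remainders are controlled by $\mathcal{H}_h$ through \eqref{eq:coercivity}. The cleanest route is to work at the discrete level first, with the consistency formulation of Theorem~\ref{thm:consistency}. This produces a residual $\mathcal{R}_h$ collecting the errors $\mathcal{R}^h_j$, which tend to zero in $L^1(0,T)$ by \eqref{eq:errors_to_zero}. We then pass to the Young measure and defect description of Theorem~\ref{thm:weak_conv}.

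\textbf{Step 1: expansion of $\mathcal{H}_h$.} Write
\[
\relent(\con|\bar\con)=\ent(\con)-\nabla_\con\ent(\bar\con)\cdot\con+\bigl(\nabla_\con\ent(\bar\con)\cdot\bar\con-\ent(\bar\con)\bigr).
\]
The last bracket equals $-\psi$-type quantities depending only on $\bar\con$. Hence $\int_\Omega\langle\ym,\relent\rangle\,dx+\mathfrak{E}$ splits into three groups:
\begin{itemize}
\item the entropy term $\int\ent$, handled by the entropy inequality \eqref{eq:consist_eta} (or \eqref{eq:DW-entropy}) with a constant test function;
\item the linear terms $-\int\bar\entvar\cdot\con$ with $\bar\entvar=\nabla_\con\ent(\bar\con)$;
\item a term depending on $\bar\con$ alone.
\end{itemize}
For the linear terms we use the continuity identities \eqref{eq:consist_rho} with $\varphi=\bar V_{\rho_i}$, the momentum identity \eqref{eq:consist_m} with $\boldsymbol\phi=\bar V_{\moment}$, and energy conservation \eqref{eq:consist_E} paired with $\bar V_{\ener}=1/\bar\temp$. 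Since the test function $1/\bar\temp$ varies in time and space, the energy component needs a weak formulation; we take it from the consistency formulation \eqref{eq:consist_goal}, which holds for the full conservative vector. These functions are $C^1$ by \eqref{eq:strong_bounds}. If $C^2$ is needed for Theorem~\ref{thm:consistency}, we mollify and remove the mollification afterwards. The $\bar\con$-only term evolves in time by the chain rule. Its time derivative is rewritten with the strong equations $\partial_t\bar\con=-\nabla\cdot\conflux(\bar\con)$ and the identity $\nabla\psi(\bar\entvar)=\conflux(\bar\con)^T\nabla\bar\entvar$.

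\textbf{Step 2: collecting terms.} After summation, the time derivatives of $\bar\entvar$ are rewritten through $\partial_t\bar\entvar=\ent''(\bar\con)\partial_t\bar\con$ and the strong equation. This cancels all terms linear in $\con-\bar\con$. The result is the standard form
\[
\mathcal{H}_h(t)-\mathcal{H}_h(0)\le -\int_0^t\!\!\int_\Omega \nabla\bar\entvar:\bigl(\conflux(\con)-\conflux(\bar\con)-\conflux'(\bar\con)(\con-\bar\con)\bigr)\,dx\,ds+\int_0^t(\text{defect terms})+\int_0^t\mathcal{R}_h .
\]
The relative flux is quadratic in $\con-\bar\con$ on the range permitted by Assumption~\ref{as_1}. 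That assumption gives uniform upper and lower bounds on the densities, bounds on the energy, and hence bounds on the velocity and temperature, so the states stay in a compact subset of $\mathbb{F}$ where $\conflux''$ and $\ent''$ are bounded. The relative flux is therefore bounded by $C\|\nabla\bar\entvar\|_\infty|\con-\bar\con|^2$. By \eqref{eq:coercivity} this is at most $C(s)\relent(\con|\bar\con)$, with $C$ depending on $\|\nabla\bar\con\|_\infty$, $\|\partial_t\bar\con\|_\infty$, $\|\ent''\|_\infty$ and $\|\conflux''\|_\infty$, as stated.

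\textbf{Step 3: defects (main obstacle).} In the limit, the momentum test produces $\int\nabla\bar V_{\moment}:d\mathfrak{R}$, where $\bar V_{\moment}=\bar\vel/\bar\temp$. The energy inequality produces $\mathfrak{E}$. The compatibility condition \eqref{eq:DW-defect} gives $|\mathfrak{R}|\le C\,\mathrm{tr}\,\mathfrak{R}\le C\bar k\,\mathfrak{E}$, since $\mathfrak{R}$ is positive semidefinite. Hence
\[
\Bigl|\int\nabla\bar V_{\moment}:d\mathfrak{R}\Bigr|\le C\|\nabla\bar\con\|_\infty\,\mathfrak{E},
\]
and this term is absorbed into $C(s)\mathcal{H}_h(s)$. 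This is where Remark~\ref{rem:defect_in_RE} enters. The entropy flux, written as a Young measure average, is handled by combining it with the other flux terms under $\langle\ym;\cdot\rangle$ before applying the quadratic estimate.

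The delicate point is the pressure. It is expressed through $\ent$ rather than through $\ener$, and pairing energy with $1/\bar\temp$ requires showing that the entropy weight $\bar V_{\ener}$ stays uniformly positive so the sign of the energy defect is preserved. This is guaranteed by $\bar\temp>0$ in \eqref{eq:strong_bounds}. Finally, the residuals from Theorem~\ref{thm:consistency} are collected into $\mathcal{R}_h$. They vanish in $L^1(0,T)$ by \eqref{eq:errors_to_zero}, for fixed smooth test functions built from $\bar\con$, which gives \eqref{eq:RE_ineq}.
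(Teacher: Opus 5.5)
Your proposal follows the paper's argument. You test with the entropy variables $\nabla_\con\ent(\bar\con)$, cancel the linear terms via the chain rule and the strong equations, and bound the quadratic relative flux through \eqref{eq:coercivity}; you then absorb the Reynolds defect into $\mathfrak{E}$ via \eqref{eq:DW-defect} and collect the residuals of Theorem~\ref{thm:consistency} into $\mathcal{R}_h$. Running the computation at the discrete level, so that the energy can be tested with a non-constant weight (which the global inequality \eqref{eq:DW-energy} alone does not allow), is a sensible refinement of the paper's sketch; note only the sign slip: for $\ent=-\rho s$ one has $\partial_{\ener}\ent(\bar\con)=-1/\bar\temp$, so the positive weight $1/\bar\temp$ multiplies $\ener$ in $\relent$.
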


\begin{proof}
The proof follows~\cite{feireisl2021numerical}, testing the DW conditions
\eqref{eq:DW-continuity}, \eqref{eq:DW-momentum},
\eqref{eq:DW-entropy}, and \eqref{eq:DW-energy} against
test functions derived from $\nabla_\con\ent(\bar\con)$
and comparing with the strong equations for $\bar\con$,
all linear terms in $(\con-\bar\con)$ cancel by the chain
rule identities
\[
\nabla_x(\nabla_\con\ent(\bar\con))
= \ent''(\bar\con)\nabla_x\bar\con,
\qquad
\partial_t(\nabla_\con\ent(\bar\con))
= \ent''(\bar\con)\partial_t\bar\con.
\]
Three additional contributions arise. First, the consistency
residuals from Theorem~\ref{thm:consistency} produce
$\mathcal{R}_h\to 0$ in $L^1(0,T)$. Second, the Reynolds
defect $\mathfrak{R}$ in \eqref{eq:DW-momentum} contributes
\[
\left|\int_\Omega\mathfrak{R}:
\nabla_x(\nabla_\moment\ent(\bar\con))\,dx\right|
\leq C\|\bar\con\|_{C^1}
\int_\Omega d|\mathfrak{R}|
\leq C(t)\,\mathcal{H}_h(t),
\]
using the defect compatibility \eqref{eq:DW-defect}. Third,
the energy defect $\mathfrak{E}$ in \eqref{eq:DW-energy}
contributes $C(t)\,\mathfrak{E}(t)\leq C(t)\,\mathcal{H}_h(t)$
by \eqref{eq:strong_bounds} and \eqref{eq:RE_total}.
Collecting all terms yields \eqref{eq:RE_ineq},
see~\cite{feireisl2016dissipative} for the extension to
the multicomponent pressure
$\pressure(\rho_1,\ldots,\rho_\NVAR,\ent)$.
\end{proof}

We now proceed to establish the weak-strong uniqueness property for completeness:
\begin{theorem}[Weak-strong uniqueness]\label{thm:WSU}
Let $\bar\con\in C^1([0,T]\times\Omega)$ be a classical
solution of \eqref{eq:multi_component_euler} satisfying
\eqref{eq:strong_bounds}, and let
$(\rho_1,\ldots,\rho_\NVAR,\moment,\ent,\ym_{t,x},
\mathfrak{R},\mathfrak{E})$ be a DW solution in the sense
of Definition~\ref{def:DW} with the same initial data
\[
\ym_{0,x}
= \delta_{(\bar\rho_1(0,x),\ldots,
\bar\rho_\NVAR(0,x),\bar\moment(0,x),\bar\eta(0,x))}.
\]
Then $\ym_{t,x}=\delta_{\bar\con(t,x)}$,
$\mathfrak{R}(t)=0$, and $\mathfrak{E}(t)=0$
for a.a.\ $t\in(0,T)$.
\end{theorem}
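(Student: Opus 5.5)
The argument is a Grönwall argument built on the relative entropy inequality of Proposition~\ref{prop:RE}, applied to a genuine DW solution rather than to the limit of the scheme. The residual $\mathcal{R}_h$ therefore does not appear. For a DW solution satisfying Definition~\ref{def:DW} exactly, the consistency errors are absent, so the same computation gives
\[
\mathcal{H}(t)\le \mathcal{H}(0)+\int_0^t C(s)\,\mathcal{H}(s)\,ds
\qquad\text{for a.a. } t\in(0,T),
\]
where $\mathcal{H}(t)=\int_\Omega\langle\ym_{t,x},\relent(\tilde\con\,|\,\bar\con(t,\cdot))\rangle\,dx+\int_{\Omega}d\mathfrak{E}(t)$.

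\emph{Step 1: deriving the inequality.} I test the DW identities against $\nabla_\con\ent(\bar\con)$, which is admissible because $\bar\con\in C^1$ with $\bar\rho_i,\bar\temp$ bounded below. Concretely:
\begin{itemize}
\item test \eqref{eq:DW-continuity} with the species components $\partial_{\rho_i}\ent(\bar\con)$;
\item test \eqref{eq:DW-momentum} with $\nabla_\moment\ent(\bar\con)$;
\item use the entropy inequality \eqref{eq:DW-entropy} with $\varphi\approx 1$ (a cut-off in time to reach the endpoints);
\item use the energy inequality \eqref{eq:DW-energy} for the energy component.
\end{itemize}
Since $\bar\con$ solves \eqref{eq:multi_component_euler} classically, the terms linear in $\tilde\con-\bar\con$ cancel via the chain rule identities used in the proof of Proposition~\ref{prop:RE}. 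What remains is quadratic: it has the form $\langle\ym;\,\conflux(\tilde\con\,|\,\bar\con)\rangle:\nabla_x\nabla_\con\ent(\bar\con)$, where $\conflux(\cdot|\cdot)$ is the relative flux. On the support of $\ym$, where $\tilde\rho_i\ge0$ and $\tilde\ent/\tilde\rho\ge\underline{s}$, this relative flux is dominated by $C\,\relent(\tilde\con\,|\,\bar\con)$. That domination uses \eqref{eq:coercivity} near $\bar\con$ together with the growth of $\relent$ for large states. The Reynolds defect contributes $\int\nabla_x\nabla_\moment\ent(\bar\con):d\mathfrak{R}$, which by \eqref{eq:DW-defect} is bounded by $\bar k\,C(M)\,\mathfrak{E}(t)\le C\,\mathcal{H}(t)$.

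\emph{Step 2: initial data and Grönwall.} Since $\ym_{0,x}=\delta_{\bar\con(0,x)}$, we have $\relent(\bar\con(0)\,|\,\bar\con(0))=0$. The energy inequality is written with the initial energy of $\bar\con(0)$, so no defect is present at time zero and $\mathcal{H}(0)=0$. Grönwall's lemma with $C\in L^1(0,T)$ then gives $\mathcal{H}(t)=0$ for a.a.\ $t$.

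\emph{Step 3: identifying the limit.} From $\mathcal{H}(t)=0$ we read off two things. First, $\mathfrak{E}(t)=0$, and then $\mathfrak{R}(t)=0$ by $\mathrm{tr}[\mathfrak{R}]\le\bar k\,\mathfrak{E}$ together with positive semidefiniteness. Second, $\langle\ym_{t,x},\relent(\tilde\con\,|\,\bar\con)\rangle=0$ a.e. Strict convexity of $\ent$ makes $\relent(\cdot\,|\,\bar\con)$ vanish only at $\bar\con$, so the nonnegative integrand forces $\ym_{t,x}=\delta_{\bar\con(t,x)}$.

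\emph{Main obstacle.} The hardest step is Step 1, specifically bounding the relative flux and pressure by $\relent$ globally, not only near $\bar\con$. The two-sided estimate \eqref{eq:coercivity} is local, but $\ym$ may charge large densities, large momenta, or near-vacuum states. I would first split the phase space into a compact neighbourhood of the range of $\bar\con$ and its complement. On the compact part I use \eqref{eq:coercivity}. On the complement I use that $\relent$ grows at least linearly in $|\tilde\con|$, while the fluxes $\tilde\moment\otimes\tilde\moment/\tilde\rho+\pressure\mathbb{I}$ are controlled by the energy, and the energy is controlled by $\relent$ for large states with $\tilde\ent/\tilde\rho\ge\underline{s}$. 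The multicomponent pressure $\pressure=\sum\tilde\rho_i r_i\temp\le(\gamma_{\max}-1)\,\rho\,\intener$ allows the single-fluid argument of \cite{feireisl2016dissipative} to carry over.
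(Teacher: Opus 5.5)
Your skeleton is the same as the paper's: $\mathcal{H}(0)=0$, Gr\"{o}nwall, then $\mathfrak{E}=0$, $\ym_{t,x}=\delta_{\bar\con(t,x)}$, and $\mathfrak{R}=0$ from $\mathrm{tr}[\mathfrak{R}]\le\bar k\,\mathfrak{E}$ and semidefiniteness. Using strict convexity in Step~3 is cleaner than the paper's appeal to the local bound \eqref{eq:coercivity}.

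The paper only quotes Proposition~\ref{prop:RE} with $\mathcal{R}_h=0$. You rightly try to derive the inequality for an arbitrary DW solution, but the recipe in Step~1 fails. If $\relent$ is built from the convex entropy $\ent=-\rho s$ in conservative variables, the energy component of $\nabla_\con\ent(\bar\con)$ is $\partial_\ener\ent(\bar\con)=-1/\bar\temp(t,x)$. So $\int_\Omega\relent\,dx$ contains $\int_\Omega \ener/\bar\temp\,dx$, whose weight depends on $t$ and $x$. Controlling it at time $\tau$ needs a local energy balance tested with $1/\bar\temp$. A DW solution only has the global inequality \eqref{eq:DW-energy}, with test function $\equiv 1$. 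The sketch of Proposition~\ref{prop:RE} uses the same recipe, so you cannot import it from there.

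The fix is to swap the roles of energy and entropy, as in the single-fluid DW theory. Work in the DW variables $\tilde\con=(\tilde\rho_1,\dots,\tilde\rho_\NVAR,\tilde\moment,\tilde\ent)$ with the relative energy
\[
E(\tilde\con|\bar\con)=E(\tilde\con)-E(\bar\con)-\partial E(\bar\con)\cdot(\tilde\con-\bar\con),
\qquad
E=\tfrac12|\moment|^2/\rho+\rho\,\intener(\rho_1,\dots,\rho_\NVAR,\ent).
\]
By the Gibbs relation, this equals $\bar\temp\,\relent$ pointwise. The total energy now carries the constant weight $1$, so \eqref{eq:DW-energy} suffices and yields $\mathfrak{E}$ with weight $1$, as in your $\mathcal{H}$. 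The entropy inequality is tested with $\bar\temp>0$, which is admissible because it is local and holds for all nonnegative test functions. The continuity and momentum equations are tested with $\partial_{\rho_i}E(\bar\con)$ and $\bar\vel$.

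There is a second gap. \eqref{eq:DW-energy} controls only the energy of the barycentre plus $\mathfrak{E}$, while your $\mathcal{H}$ contains $\int_\Omega\langle\ym;\cdot\rangle\,dx$. Nothing in Definition~\ref{def:DW} ties the spread of $\ym$ to $\mathfrak{E}$. Concretely, take a constant state $\bar\con$ and an $x$-independent, non-Dirac $\ym$ supported near $\bar\con$ with barycentre $\bar\con$. This satisfies every item of Definition~\ref{def:DW} with $\mathfrak{E}=\mathfrak{R}=0$, since the entropy flux is constant and its term vanishes on the torus. Yet your $\mathcal{H}(t)$ is a positive constant for $t>0$ while $\mathcal{H}(0)=0$. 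So neither the Gr\"{o}nwall inequality of Step~1 nor $\ym_{t,x}=\delta_{\bar\con}$ can be derived.

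To repair this you must add the compatibility condition
\[
\int_\Omega\langle\ym;E(\tilde\con)\rangle\,dx\le\int_\Omega E(\rho_1,\dots,\rho_\NVAR,\moment,\ent)\,dx+\int_{\overline\Omega}d\mathfrak{E}.
\]
It holds for limits of the scheme, where the oscillation part $\mathfrak{E}_{od}$ is exactly this Jensen gap. Then run Gr\"{o}nwall on
\[
\int_\Omega E(\rho_1,\dots,\rho_\NVAR,\moment,\ent\,|\,\bar\con)\,dx+\int_{\overline\Omega}d\mathfrak{E},
\]
evaluated at the barycentre. This avoids counting the oscillation defect twice and dominates $\int_\Omega\langle\ym;E(\tilde\con|\bar\con)\rangle\,dx$. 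Its vanishing then gives the Dirac mass by your strict-convexity argument.
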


\begin{proof}
Since $\ym_{0,x}=\delta_{\bar\con(0,x)}$, we have
$\mathcal{H}_h(0)=0$. Since $\bar\con$ is a classical
solution, it satisfies the equations exactly, so
$\mathcal{R}_h=0$ in \eqref{eq:RE_ineq}. Gr\"{o}nwall's
inequality gives $\mathcal{H}_h(t)=0$ for all $t\in[0,T]$.
Since both terms in \eqref{eq:RE_total} are non-negative,
\[
\mathfrak{E}(t)=0
\qquad\text{and}\qquad
\int_\Omega\langle\ym_{t,x},\,
\relent(\tilde\con|\bar\con)\rangle\,dx = 0
\quad\text{for a.a. }t.
\]
The coercivity \eqref{eq:coercivity} forces
$\ym_{t,x}=\delta_{\bar\con(t,x)}$
for a.e.\ $(t,x)$. Finally, $\mathfrak{R}(t)=0$
follows from \eqref{eq:DW-defect} and $\mathfrak{E}(t)=0$,
$\mathrm{tr}[\mathfrak{R}]\leq\bar{k}\,\mathfrak{E}=0$
and $\mathfrak{R}$ being positive semi-definite.
\end{proof}

The preceding results allow us to conclude the following strong convergence theorem:

\begin{theorem}[Strong convergence]\label{thm:strong_conv}
Under the hypotheses of Theorem~\ref{thm:weak_conv} and
Proposition~\ref{prop:RE}, suppose the multicomponent Euler 
system admits a classical solution
$\bar\con\in C^1([0,T]\times\Omega)$ satisfying
\eqref{eq:strong_bounds}, and that the initial data are
well-prepared: 
\begin{equation}\label{eq:wellprepared}
\mathcal{H}_h(0)
= \int_\Omega
\relent\bigl(\con_h(0,\cdot)\,|\,\bar\con(0,\cdot)\bigr)\,dx
\to 0
\quad\text{as }h\to 0.
\end{equation}
Then, as $h\to 0$ along the subsequence of
Theorem~\ref{thm:weak_conv}:
\begin{subequations}\label{eq:strong_conv}
\begin{align}
\rho_{i,h} &\to \bar\rho_i
&&\text{strongly in }L^1((0,T)\times\Omega),
\quad i=1,\ldots,\NVAR,
\label{eq:conv_rho}\\
\rho_h &\to \bar\rho
&&\text{strongly in }L^q((0,T)\times\Omega),
\quad 1\leq q < \gamma_{\min},
\label{eq:conv_rho_mix}\\
\moment_h &\to \bar\moment
&&\text{strongly in }L^q((0,T)\times\Omega;\mathbb{R}^N),
\quad 1\leq q < k,
\label{eq:conv_m}\\
\ent_h &\to \bar\ent
&&\text{strongly in }L^1((0,T)\times\Omega),
\label{eq:conv_eta}
\end{align}
\end{subequations}
where $k = 2\gamma_{\min}/(\gamma_{\min}+1)$.
\end{theorem}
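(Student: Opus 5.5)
We combine the relative entropy inequality of Proposition~\ref{prop:RE} with the well-prepared initial data~\eqref{eq:wellprepared} and the weak-strong uniqueness of Theorem~\ref{thm:WSU}. The goal is first to show that the relative entropy between the discrete solution and $\bar\con$ vanishes in $L^1((0,T)\times\Omega)$, and then to turn this into the strong convergences~\eqref{eq:strong_conv}.

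\textbf{Step 1 (relative entropy vanishes).} We apply \eqref{eq:RE_ineq} and Gr\"onwall's inequality. This gives
\[
\mathcal{H}_h(t)\le \Bigl(\mathcal{H}_h(0)+\|\mathcal{R}_h\|_{L^1(0,T)}\Bigr)\exp\Bigl(\int_0^T C(s)\,ds\Bigr).
\]
By \eqref{eq:wellprepared} and Theorem~\ref{thm:consistency}, the right-hand side tends to $0$. Hence
\[
\sup_{t}\int_\Omega\relent(\con_h\,|\,\bar\con)\,dx\to0 .
\]
We read Proposition~\ref{prop:RE} at the discrete level, with $\ym$ replaced by $\delta_{\con_h}$, which is why its residual $\mathcal{R}_h$ is nonzero. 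If only the limit version is available, we argue differently. The limit satisfies $\mathcal{H}(0)=0$, because well-prepared data together with Jensen's inequality force $\ym_{0,x}=\delta_{\bar\con(0,x)}$. Theorem~\ref{thm:WSU} then gives $\ym_{t,x}=\delta_{\bar\con(t,x)}$ and $\mathfrak{E}=\mathfrak{R}=0$.

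\textbf{Step 2 (strong convergence via Young measures).} If the Young measure generated by $\{\con_h\}$ is a Dirac mass a.e., then $\con_h\to\bar\con$ in measure on $(0,T)\times\Omega$ (see \cite{ball2005version}). The same conclusion follows directly from Step 1: the relative entropy is coercive, quadratically on compacts by \eqref{eq:coercivity} and at least linearly at infinity by convexity. We then apply Chebyshev's inequality on the sets $\{|\con_h-\bar\con|>\epsilon\}$. The uniqueness of the limit removes any dependence on the subsequence.

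\textbf{Step 3 (upgrade to $L^q$).} Convergence in measure combined with equi-integrability gives strong convergence by Vitali's theorem. By Lemma~\ref{lem:mix_rho_bound}, $\rho_h$ is bounded in $L^\infty(0,T;L^{\gamma_{\min}})$ and $\moment_h$ in $L^\infty(0,T;L^k)$. For $q<\gamma_{\min}$ (respectively $q<k$), this higher integrability makes $|\rho_h|^q$ and $|\moment_h|^q$ equi-integrable, which yields \eqref{eq:conv_rho_mix} and \eqref{eq:conv_m}. Since $0<\rho_{i,h}\le\rho_h$, the same argument gives \eqref{eq:conv_rho} in $L^1$.

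\textbf{Step 4 (entropy, the main obstacle).} The delicate step is the entropy \eqref{eq:conv_eta}, because we only have an $L^1$ bound on $\ent_h$, which does not give equi-integrability. Here we use Assumption~\ref{as_1} and the remark following it. Under that assumption, densities, temperature and pressure are bounded above and below uniformly in $h$. From \eqref{eq:s_expand} and \eqref{eq:s_lower_bound_all_t}, $\ent_h=-\rho_h s_h$ is then uniformly bounded in $L^\infty$. Convergence in measure together with this $L^\infty$ bound gives $L^1$ convergence.

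The same uniform bounds keep $\con_h$ in a compact subset of the state space. This justifies applying \eqref{eq:coercivity} in Step 2 with fixed constants $c_1,c_2$, so Step 2 does not need the separate treatment at infinity.
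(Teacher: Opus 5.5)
Your proposal is correct. Its first half, Gr\"onwall applied to \eqref{eq:RE_ineq} with well-prepared data and a vanishing residual, is exactly the paper's argument. The difference lies in how you pass from $\sup_t\mathcal{H}_h(t)\to0$ to \eqref{eq:strong_conv}.

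The paper invokes the quadratic coercivity \eqref{eq:coercivity} directly. This gives $\sup_t\|\con_h(t)-\bar\con(t)\|_{L^2(\Omega)}^2\le c_1^{-1}\mathcal{H}_h(t)\to0$. It then obtains the $L^1$ statements by Cauchy--Schwarz, and the $L^q$ statements for $\rho_h$ and $\moment_h$ by interpolation with the bounds of Lemma~\ref{lem:mix_rho_bound}.

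You instead extract only convergence in measure, via Chebyshev or via the Dirac Young measure given by Theorem~\ref{thm:WSU}. You then upgrade with Vitali's theorem. The $L^{\gamma_{\min}}$ and $L^k$ bounds supply equi-integrability for $\rho_{i,h},\rho_h,\moment_h$, and the $L^\infty$ bound from Assumption~\ref{as_1} is used only for $\ent_h$.

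\textbf{What the paper's route buys.} It is quantitative: it gives convergence in $L^\infty(0,T;L^2(\Omega))$ with an explicit bound in terms of $\mathcal{H}_h(0)$ and $\|\mathcal{R}_h\|_{L^1(0,T)}$. However, \eqref{eq:coercivity} holds only on a compact set of states. So the paper tacitly needs $\con_h$ to remain in such a set uniformly in $h$; this is what ``locally near $\bar\con$'' hides. Once that is granted, the restrictions $q<\gamma_{\min}$ and $q<k$ become superfluous.

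\textbf{What your route buys.} It gives no rate, but it shows exactly where compactness is needed. The convergences of $\rho_{i,h}$, $\rho_h$ and $\moment_h$ use only the at-least-linear growth of $\relent(\cdot\,|\,\bar\con)$ at infinity and the integrability bounds of Lemma~\ref{lem:mix_rho_bound}. Assumption~\ref{as_1} enters only for the entropy, whose $L^1$ bound alone gives no equi-integrability.

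Your alternative, limit-level route through Theorem~\ref{thm:WSU} also sidesteps the ambiguity in Proposition~\ref{prop:RE} between the discrete quantity $\mathcal{H}_h$ and its Young-measure counterpart.
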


\begin{proof}
Applying Gr\"{o}nwall to \eqref{eq:RE_ineq} with
\eqref{eq:wellprepared} and $\mathcal{R}_h\to 0$:
\[
\sup_{t\in[0,T]}\mathcal{H}_h(t)
\leq\Bigl(\mathcal{H}_h(0)
+\int_0^T\mathcal{R}_h(s)\,ds\Bigr)
\exp\!\Bigl(\int_0^T C(s)\,ds\Bigr)
\to 0.
\]
By Theorem~\ref{thm:WSU}, $\ym_{t,x}=\delta_{\bar\con(t,x)}$
and $\mathfrak{E}=0$. The coercivity \eqref{eq:coercivity}
then gives
\[
\int_\Omega|\con_h - \bar\con|^2\,dx
\leq \frac{1}{c_1}\mathcal{H}_h(t)
\to 0 \quad\text{uniformly in }t,
\]
so $\con_h\to\bar\con$ strongly in
$L^\infty(0,T;\,L^2(\Omega))$ locally near $\bar\con$.
Since $\bar\con\in C^1$ is uniformly bounded and
$\|\con_h - \bar\con\|_{L^\infty(L^2)}\to 0$,
strong $L^1((0,T)\times\Omega)$ convergence of $\rho_{i,h}$
and $\eta_h$ follows by the Cauchy-Schwarz inequality.
For $\rho_h\in L^\infty(0,T;\,L^{\gamma_{\min}}(\Omega))$
and $\moment_h\in L^\infty(0,T;\,L^k(\Omega;\mathbb{R}^N))$,
interpolation between the $L^2$ convergence and the
stability bounds of Lemma~\ref{pos_pressure} yields strong
convergence in $L^q$ for $1\leq q < \gamma_{\min}$ and
$1\leq q < r$ respectively (see~\cite{feireisl2021numerical} for details). The $'\NVAR'$ partial densities
$\rho_{i,h}$ converge separately since each satisfies an
independent continuity equation \eqref{eq:DW-continuity}.
\end{proof}


\section{Numerical Simulation}\label{sec:numerics}

In the following section, we present numerical experiments that validate our theoretical findings. Rather than analyzing the FV scheme \eqref{eq:semi_scheme}-\eqref{eq:LF_flux_def} alone, we extend the numerical study to structure-preserving discontinuous Galerkin spectral element methods (DGSEM) for the multicomponent Euler equations, as described in \cite{renac2021entropy}. Convergence analyses of DGSEM for the classical Euler equations and for a stochastic version of the method were carried out in \cite{lukavcova2023convergence} and \cite{breit2025discontinuous}, respectively. 
It is expected that the convergence results transfer as well to the multicomponent setting directly, as will be demonstrated in the section. 
All simulations are implemented in \texttt{Trixi.jl}~\cite{ranocha2021adaptive, schlottkelakemper2025trixi} where further tests on multicomponent Euler equations can be found. We provide numerical verification of the theoretical convergence results.

\subsection{Smooth convergence test}\label{sec:smooth_test} 
We verify the strong convergence result of 
Theorem~\ref{thm:strong_conv} numerically following 
the approach of~\cite{lukavcova2023convergence} and use the provided test inside \texttt{Trixi.jl}. We 
employ the method of manufactured solutions on 
$\Omega = [-1,1]^2$ with periodic boundary conditions. 
The spatial discretization uses the FV setting ($polydeg=0$) and the nodal DGSEM with polynomial degree $(polydeg=1)$ with 
 the local Lax-Friedrichs surface flux. Time integration is 
performed with the  
Runge-Kutta scheme of Carpenter and 
Kennedy~\cite{Kennedy2003} at $\mathrm{CFL}=0.5$.

The two-component Euler system 
\eqref{eq:multi_component_euler} with parameters 
$\gamma_1 = \gamma_2 = 1.4$ and $r_1 = r_2 = 0.4$ is 
supplemented by a smooth source term consistent with the 
exact solution
\begin{equation}\label{eq:exact_sol}
\rho(t,x) = c + A\sin\!\bigl(\pi(x_1+x_2-t)\bigr),
\quad
\rho_1 = \tfrac{1}{3}\rho,
\quad
\rho_2 = \tfrac{2}{3}\rho,
\quad
\vel = (1,1)^T,
\quad
\ener = \rho,
\end{equation}
with $c = 2$ and $A = 0.1$. Table~\ref{tab:eoc} reports 
the $L^2$ errors and experimental orders of convergence 
(EOC) for all conserved variables at time $t = 0.4$ under 
successive mesh refinements, confirming first-order 
convergence for $polydeg = 0$. For comparison, results for 
$polydeg = 1$ are also included, showing the expected 
second-order rate. Similar results for higher-order $polydeg>1$ can be expected as shown in \cite{renac2021entropy} or demonstrated inside the \texttt{Trixi.jl} environment \cite{schlottkelakemper2025trixi}.
\begin{table}[htbp]
\centering
\caption{$L^2$ errors and experimental orders of convergence 
(EOC) for the two-component compressible Euler system, 
smooth convergence test on $\Omega = [-1,1]^2$, $t = 0.4$, 
$\gamma_1 = \gamma_2 = 1.4$, $r_1 = r_2 = 0.4$.}
\label{tab:eoc}
\setlength{\tabcolsep}{4pt}
\begin{tabular}{c|cc|cc|cc|cc|cc}
\hline
& \multicolumn{2}{c|}{$\rho_{1,h}$} 
& \multicolumn{2}{c|}{$\rho_{2,h}$}
& \multicolumn{2}{c|}{$\rho_h \vel_{1,h}$}
& \multicolumn{2}{c|}{$\rho_h \vel_{2,h}$}
& \multicolumn{2}{c}{$\ener_h$} \\
$N$ & error & EOC & error & EOC 
    & error & EOC & error & EOC 
    & error & EOC \\
\hline
\multicolumn{11}{c}{$polydeg = 0$ \quad (FV scheme, 
first order)} \\
\hline
$16^2$  & 7.08e-03 & --   & 1.42e-02 & --   
        & 2.33e-02 & --   & 2.33e-02 & --   
        & 1.07e-01 & --   \\
$32^2$  & 3.93e-03 & 0.85 & 7.87e-03 & 0.85 
        & 1.33e-02 & 0.81 & 1.33e-02 & 0.81 
        & 6.12e-02 & 0.80 \\
$64^2$  & 2.10e-03 & 0.91 & 4.19e-03 & 0.91 
        & 7.29e-03 & 0.87 & 7.29e-03 & 0.87 
        & 3.32e-02 & 0.89 \\
$128^2$ & 1.09e-03 & 0.95 & 2.18e-03 & 0.95 
        & 3.84e-03 & 0.92 & 3.84e-03 & 0.92 
        & 1.73e-02 & 0.94 \\
\hline
mean    &          & 0.90 &          & 0.90 
        &          & 0.87 &          & 0.87 
        &          & 0.88 \\
\hline
\multicolumn{11}{c}{$polydeg = 1$ \quad (second order)} \\
\hline
$16^2$  & 5.44e-04 & --   & 1.09e-03 & --   
        & 3.73e-03 & --   & 3.73e-03 & --   
        & 1.16e-02 & --   \\
$32^2$  & 1.38e-04 & 1.98 & 2.75e-04 & 1.98 
        & 1.00e-03 & 1.90 & 1.00e-03 & 1.90 
        & 2.99e-03 & 1.96 \\
$64^2$  & 3.48e-05 & 1.98 & 6.96e-05 & 1.98 
        & 2.58e-04 & 1.96 & 2.58e-04 & 1.96 
        & 7.52e-04 & 1.99 \\
$128^2$ & 8.77e-06 & 1.99 & 1.75e-05 & 1.99 
        & 6.52e-05 & 1.98 & 6.52e-05 & 1.98 
        & 1.88e-04 & 2.00 \\
\hline
mean    &          & 1.98 &          & 1.98 
        &          & 1.95 &          & 1.95 
        &          & 1.98 \\
\hline
\end{tabular}
\end{table}

\subsection{Kelvin-Helmholtz instability}\label{sec:khi}
To demonstrate the convergence properties inside the DW framework, following the classical Kelvin--Helmholtz instability (KHI) test 
described in~\cite{lukavcova2023convergence, feireisl2021numerical}, 
we design an analogous experiment for the multicomponent setting.
The computational 
domain is $\Omega = [0,1]^2$ with periodic boundary 
conditions and $\NVAR = 2$ species with 
$\gamma_1 = \gamma_2 = 1.4$ and $r_1 = r_2 = 1.0$.

The initial condition consists of four horizontal fluid 
layers separated by three perturbed interfaces located near 
$J_1 = 0.25$, $J_2 = 0.50$, and $J_3 = 0.75$. Each 
interface $I_j$ is perturbed by a superposition of ten 
Fourier modes:
\begin{equation}\label{eq:interface}
I_j(x_1) = J_j + \varepsilon\sum_{i=1}^{10}
a_j^{(i)}\cos\!\bigl(b_j^{(i)} + 2\pi i\,x_1\bigr),
\qquad j = 1, 2, 3,
\end{equation}
with amplitude $\varepsilon = 0.01$ and random 
coefficients $a_j^{(i)}$, $b_j^{(i)}$ normalized so 
that $\sum_{i=1}^{10} a_j^{(i)} = 1$. The partial 
densities and horizontal velocity are assigned by layer:
\begin{equation}\label{eq:khi_layers}
(\rho_1, \rho_2, \vel_1) = 
\begin{cases}
(0.8,\ 0.2,\ -0.5) & x_2 \leq I_1, \\
(0.2,\ 1.8,\ +0.5) & I_1 < x_2 \leq I_2, \\
(1.8,\ 0.2,\ -0.5) & I_2 < x_2 \leq I_3, \\
(0.2,\ 0.8,\ +0.5) & x_2 > I_3,
\end{cases}
\end{equation}
with $\vel_2 = 0$ and uniform pressure $\pressure = 2.5$ throughout 
the domain, ensuring pressure equilibrium across all 
interfaces.
To quantify convergence, we follow the framework 
of~\cite{lukavcova2023convergence} 
and compute two error quantities. Let $\con^{h_n}$ denote 
the numerical solution on a mesh with $n \times n$ cells 
and let $\con^{h_N}$ denote a reference solution on the 
finest mesh with $N \times N$ cells. The classical 
grid-refinement error and the Ces\`aro average error are then 
defined by
\begin{equation}\label{eq:E1E2}
E_1(n) := \bigl\|\con^{h_n} - \con^{h_N}
\bigr\|_{L^1((0,T)\times\Omega)},
\qquad
E_2(n) := \left\|\widetilde{\con}^{h_n} 
- \widetilde{\con}^{h_N}
\right\|_{L^1((0,T)\times\Omega)},
\end{equation}
where the Ces\`aro average is defined by
$
\widetilde{\con}^{h_n} := \frac{1}{n}
\sum_{k=1}^{n} \con^{h_k}.
$
It is expected that we will not observe grid convergence similar to the Euler equations \cite{lukavcova2023convergence} due to rising oscillations on fine-scales, but convergence using  Ces\`aro averages \cite{feireisl2021numerical}.

Figures~\ref{fig:khi_p0}-\ref{fig:khi_p1} display numerical solutions of the partial density $\rho_{1,h}$ at time $t = 2$ on meshes of size $n = 512, 1024, 2048$, computed using the entropy-stable FV scheme \eqref{eq:semi_scheme}-\eqref{eq:LF_flux_def}. For the first-order scheme (Figure~\ref{fig:khi_p0}), we employ a pure 
finite volume discretization ($polydeg = 0$) with the local Lax-Friedrichs surface flux. For the second-order scheme (Figure~\ref{fig:khi_p1}), we use a nodal DGSEM with $polydeg = 1$, combining flux differencing \cite{fisher2013discretely, gassner2016split, oeffner2023} with the entropy-conservative, kinetic-energy-preserving volume flux~\cite{ranocha2018comparison} adapted to the multicomponent setting and the local Lax-Friedrichs surface flux. A Zhang-Shu positivity-preserving limiter~\cite{zhang2010positivity} is applied to the partial densities $\rho_1$, $\rho_2$ and pressure $\pressure$ at each stage of the third-order strong stability preserving Runge-Kutta time integrator (SSPRK33)at $\mathrm{CFL} = 0.8$.

Table~\ref{tab:khi} reports the errors $E_1$ and $E_2$ for successive mesh refinements. Consistent with the theoretical prediction, $E_1$ does not decrease monotonically, the individual numerical solutions exhibit different fine-scale structures at each mesh level due to the sensitivity 
of the KH. In contrast, $E_2$ decreases confirming the convergence of Ces\`aro averages to a DW solution of the multicomponent Euler system \eqref{eq:multi_component_euler}.

\begin{table}[htbp]
\centering
\caption{KHI test at $t = 2$, reference solution on 
$2048\times 2048$ mesh. Classical error $E_1$ and 
Ces\`aro average error $E_2$ with experimental orders 
of convergence for $polydeg = 0$ and $polydeg = 1$.}
\label{tab:khi}
\small
\setlength{\tabcolsep}{4pt}

\medskip
\noindent\textit{Variable: $\rho_{1,h}$}

\begin{tabular}{c|cc|cc|cc|cc}
\hline
& \multicolumn{4}{c|}{$p = 0$ (FV)}
& \multicolumn{4}{c}{$p = 1$ (DGSEM)} \\
$n$ & $E_1$ & EOC & $E_2$ & EOC 
    & $E_1$ & EOC & $E_2$ & EOC \\
\hline
$64$
& 3.40e-01 & --   & 1.82e-01 & --
& 5.08e-01 & --   & 3.08e-01 & -- \\
$128$
& 3.05e-01 & 0.16 & 1.48e-01 & 0.30
& 4.63e-01 & 0.13 & 1.90e-01 & 0.70 \\
$256$
& 2.67e-01 & 0.19 & 1.07e-01 & 0.47
& 3.99e-01 & 0.21 & 1.29e-01 & 0.56 \\
$512$
& 2.40e-01 & 0.15 & 7.24e-02 & 0.56
& 4.60e-01 & -0.20 & 9.73e-02 & 0.41 \\
$1024$
& 1.90e-01 & 0.34 & 4.24e-02 & 0.77
& 4.56e-01 & 0.01 & 6.43e-02 & 0.60 \\
\hline
\end{tabular}

\medskip
\noindent\textit{Variable: $\rho_{2,h}$}

\begin{tabular}{c|cc|cc|cc|cc}
\hline
& \multicolumn{4}{c|}{$p = 0$}
& \multicolumn{4}{c}{$p = 1$} \\
$n$ & $E_1$ & EOC & $E_2$ & EOC 
    & $E_1$ & EOC & $E_2$ & EOC \\
\hline
$64$
& 3.46e-01 & --   & 1.90e-01 & --
& 4.77e-01 & --   & 2.83e-01 & -- \\
$128$
& 2.99e-01 & 0.21 & 1.55e-01 & 0.29
& 5.11e-01 & -0.10 & 1.94e-01 & 0.54 \\
$256$
& 2.52e-01 & 0.25 & 1.12e-01 & 0.47
& 3.95e-01 & 0.37 & 1.26e-01 & 0.62 \\
$512$
& 2.17e-01 & 0.22 & 7.25e-02 & 0.63
& 4.52e-01 & -0.19 & 9.40e-02 & 0.43 \\
$1024$
& 1.66e-01 & 0.39 & 4.04e-02 & 0.84
& 4.72e-01 & -0.06 & 6.59e-02 & 0.51 \\
\hline
\end{tabular}

\medskip
\noindent\textit{Variable: $\rho_h \vel_{1,h}$}

\begin{tabular}{c|cc|cc|cc|cc}
\hline
& \multicolumn{4}{c|}{$p = 0$}
& \multicolumn{4}{c}{$p = 1$} \\
$n$ & $E_1$ & EOC & $E_2$ & EOC 
    & $E_1$ & EOC & $E_2$ & EOC \\
\hline
$64$
& 4.47e-01 & --   & 2.61e-01 & --
& 3.75e-01 & --   & 2.39e-01 & -- \\
$128$
& 3.70e-01 & 0.27 & 2.21e-01 & 0.24
& 4.62e-01 & -0.30 & 1.66e-01 & 0.53 \\
$256$
& 2.37e-01 & 0.64 & 1.60e-01 & 0.46
& 3.45e-01 & 0.42 & 1.19e-01 & 0.47 \\
$512$
& 1.29e-01 & 0.88 & 9.52e-02 & 0.75
& 3.52e-01 & -0.03 & 8.39e-02 & 0.51 \\
$1024$
& 9.89e-02 & 0.38 & 3.88e-02 & 1.30
& 3.35e-01 & 0.07 & 5.00e-02 & 0.75 \\
\hline
\end{tabular}

\medskip
\noindent\textit{Variable: $\ener_h$}

\begin{tabular}{c|cc|cc|cc|cc}
\hline
& \multicolumn{4}{c|}{$p = 0$}
& \multicolumn{4}{c}{$p = 1$} \\
$n$ & $E_1$ & EOC & $E_2$ & EOC 
    & $E_1$ & EOC & $E_2$ & EOC \\
\hline
$64$
& 2.10e-01 & --   & 5.86e-02 & --
& 4.95e-01 & --   & 2.97e-01 & -- \\
$128$
& 2.06e-01 & 0.03 & 5.58e-02 & 0.07
& 5.53e-01 & -0.16 & 2.28e-01 & 0.38 \\
$256$
& 1.97e-01 & 0.06 & 5.11e-02 & 0.13
& 4.34e-01 & 0.35 & 1.48e-01 & 0.62 \\
$512$
& 1.85e-01 & 0.09 & 4.49e-02 & 0.19
& 4.51e-01 & -0.06 & 1.00e-01 & 0.56 \\
$1024$
& 1.39e-01 & 0.41 & 3.10e-02 & 0.53
& 4.49e-01 & 0.01 & 6.32e-02 & 0.67 \\
\hline
\end{tabular}
\end{table}

\begin{figure}[htbp]
\centering
\includegraphics[width=0.32\textwidth]{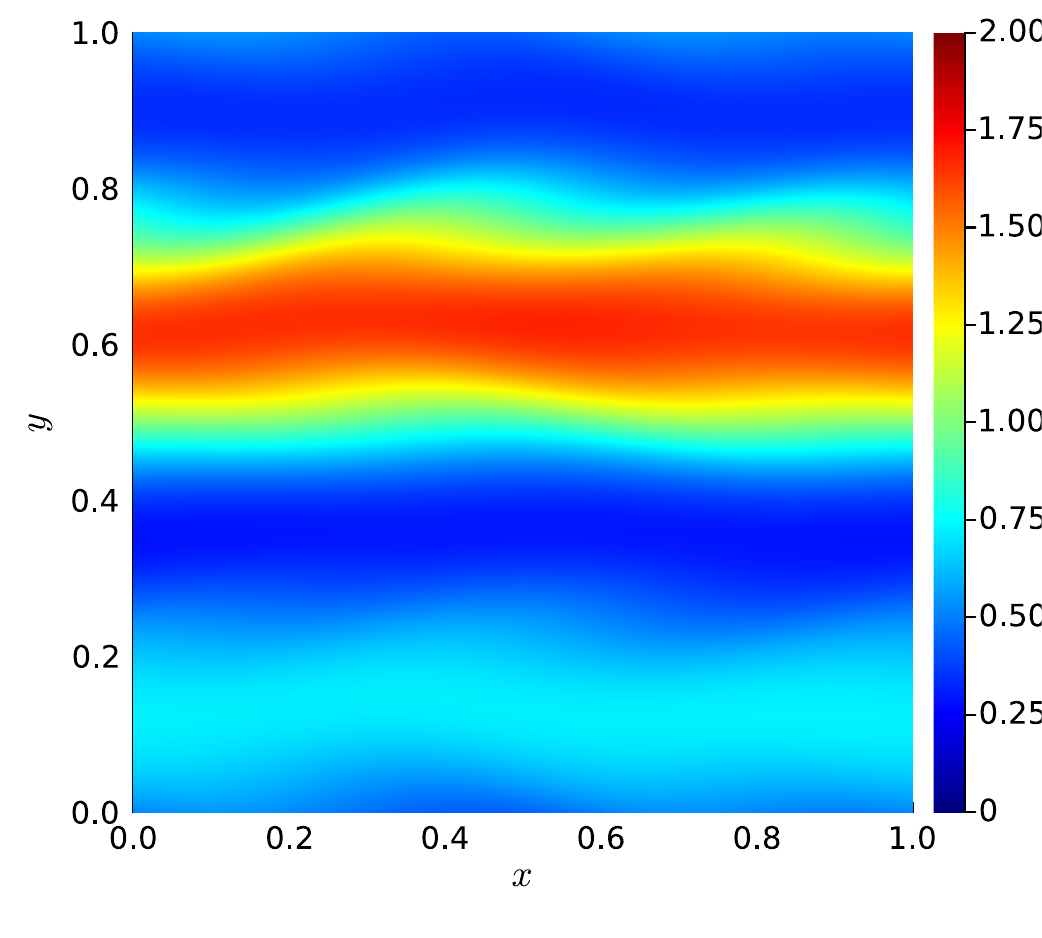}
\includegraphics[width=0.32\textwidth]{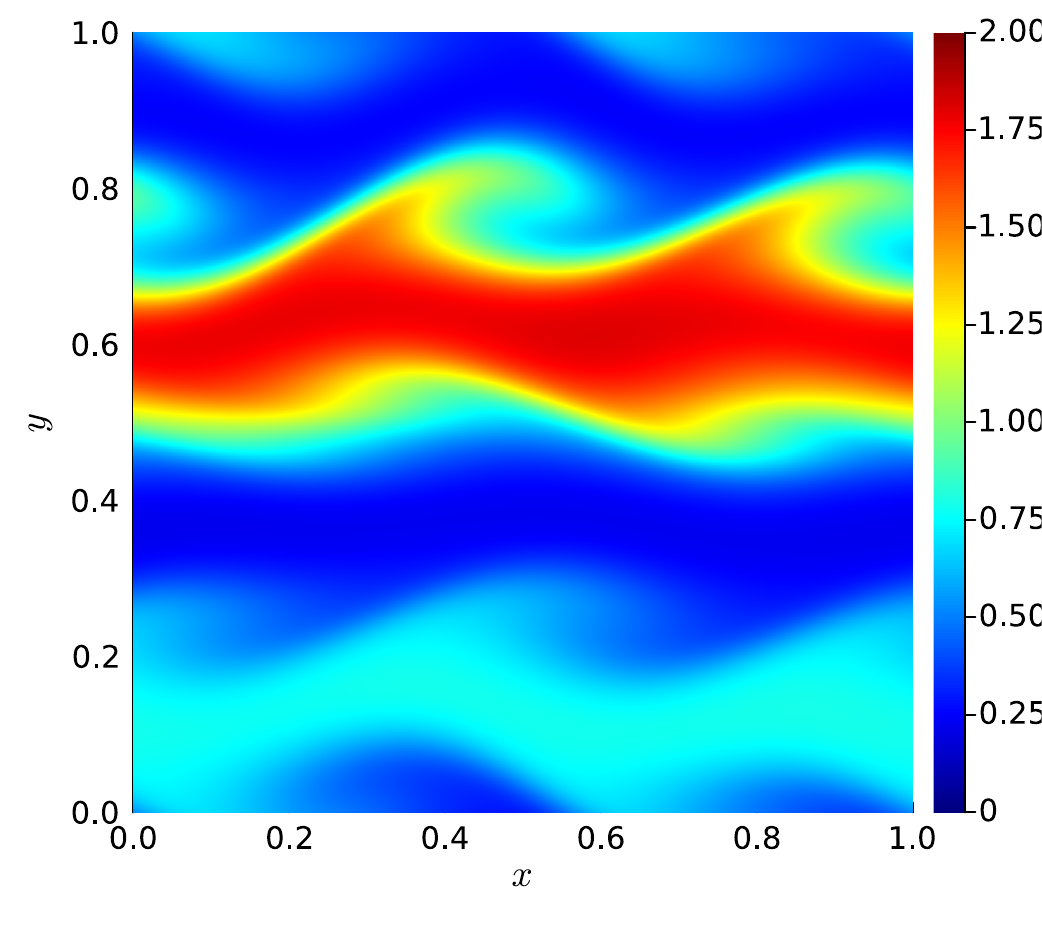}
\includegraphics[width=0.32\textwidth]{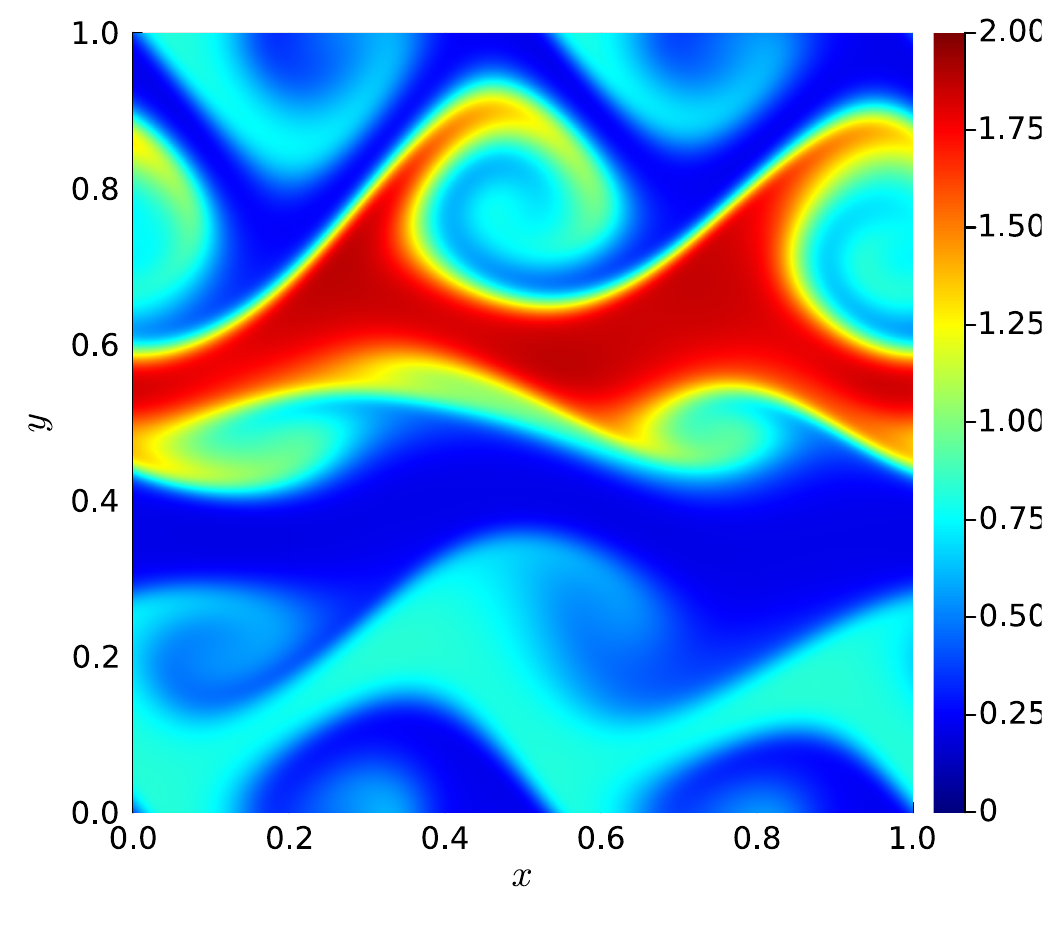}\\[4pt]
\includegraphics[width=0.32\textwidth]{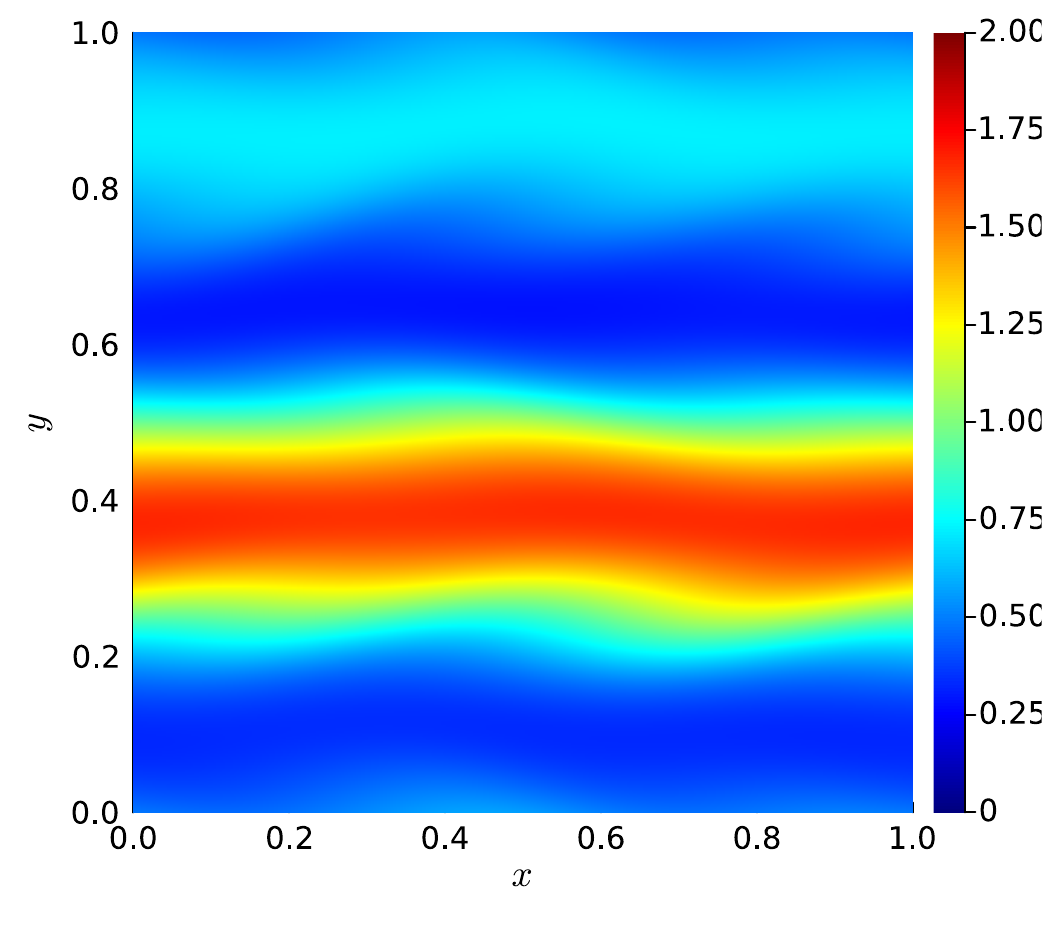}
\includegraphics[width=0.32\textwidth]{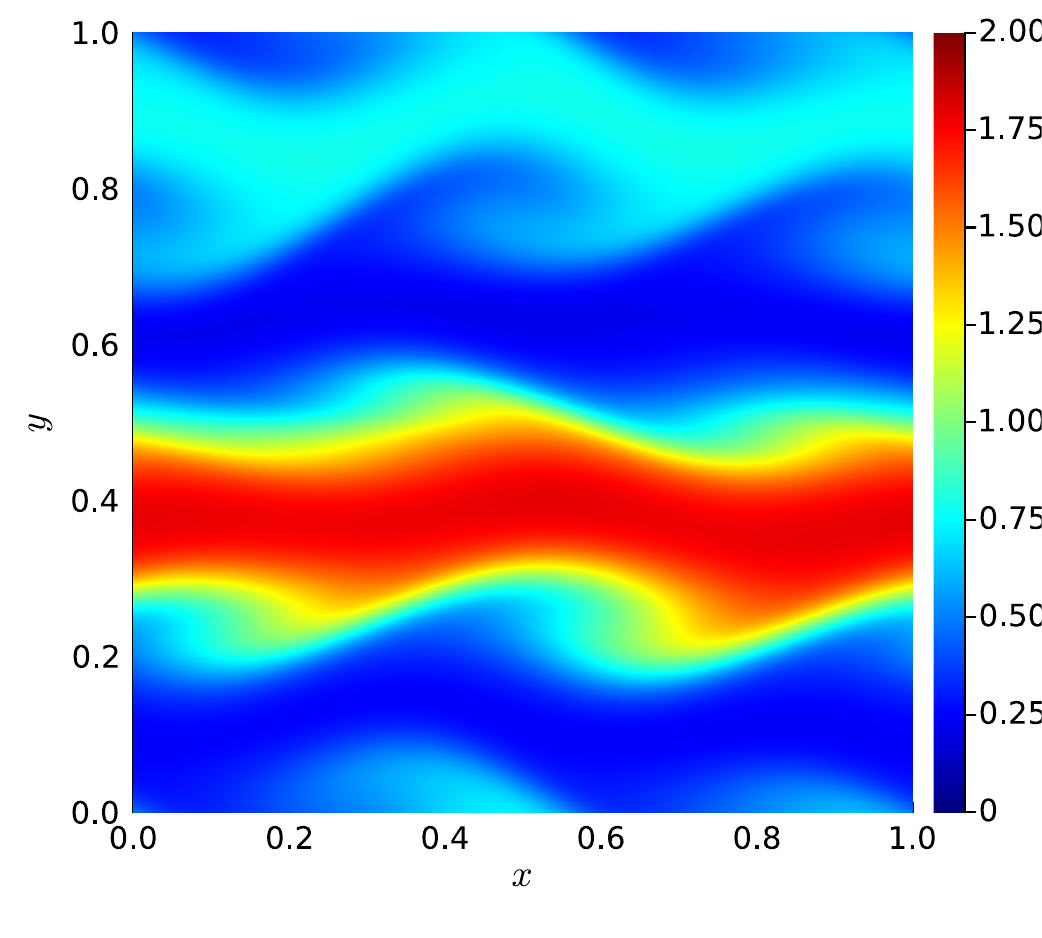}
\includegraphics[width=0.32\textwidth]{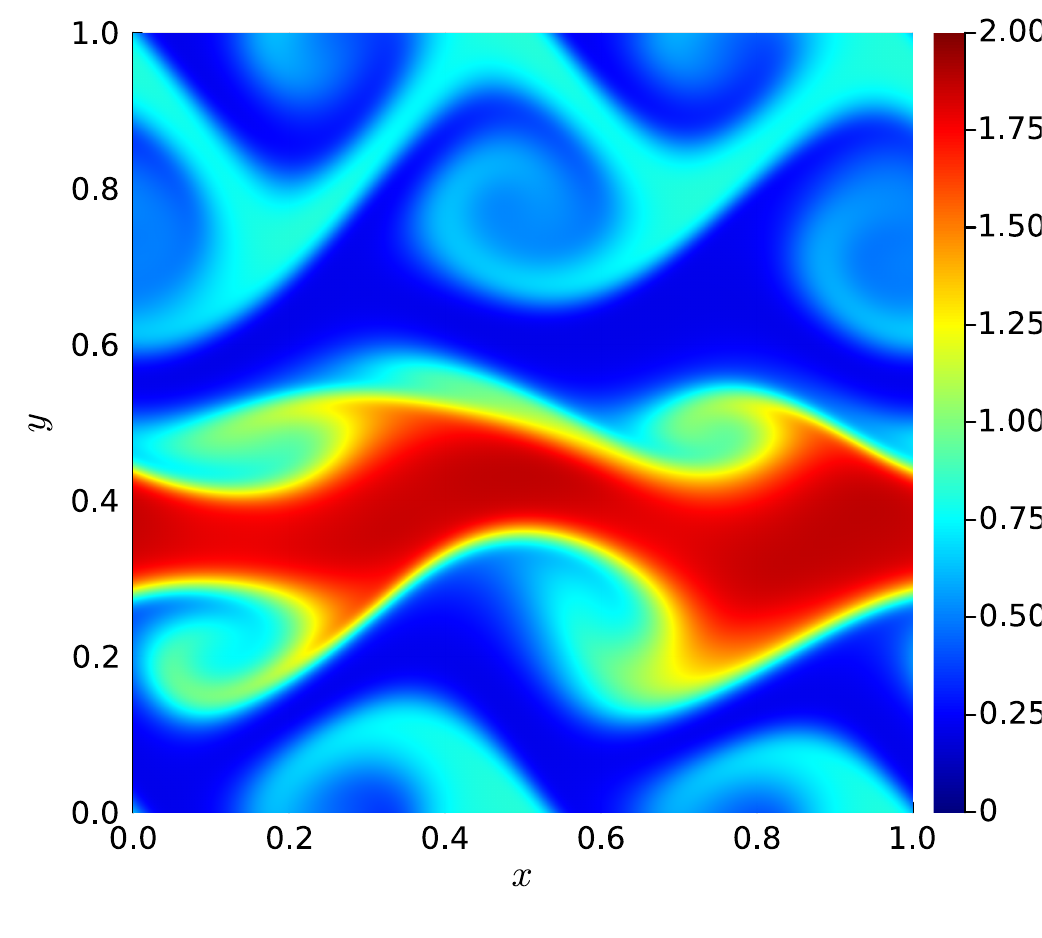}\\[4pt]
\includegraphics[width=0.32\textwidth]{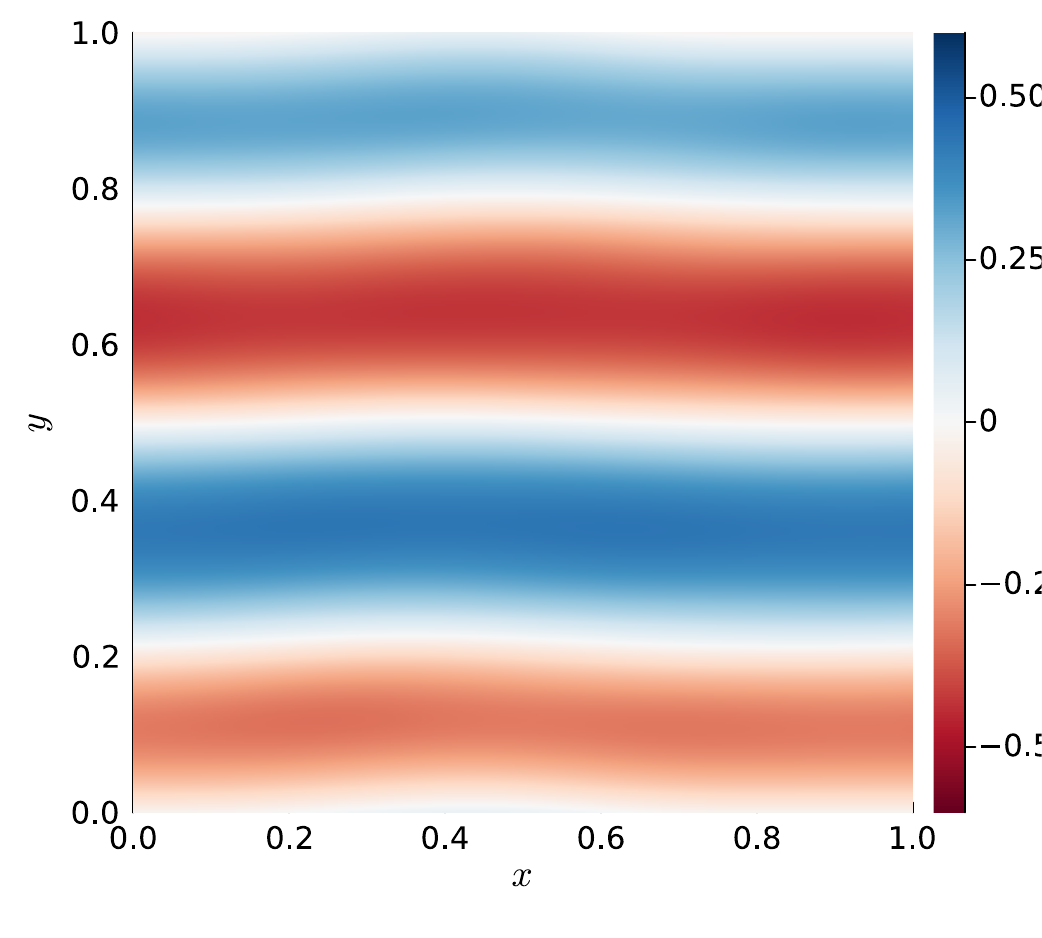}
\includegraphics[width=0.32\textwidth]{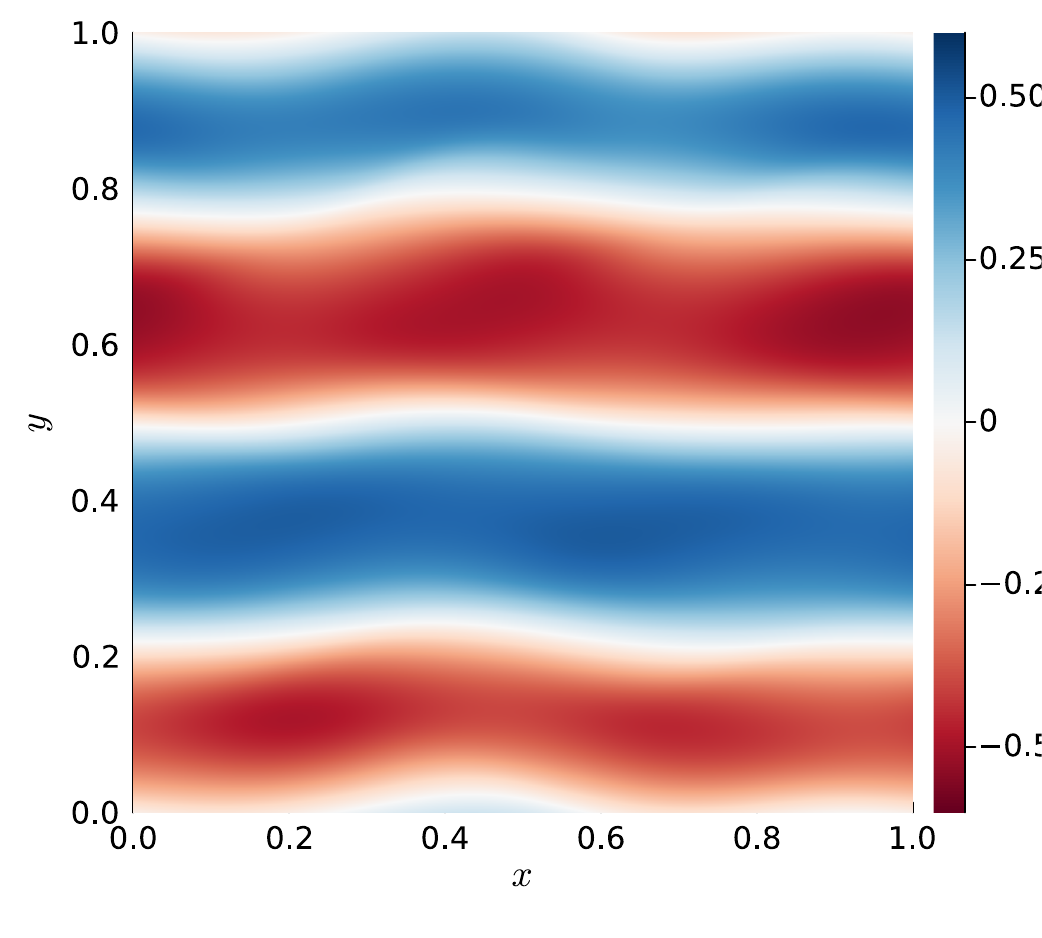}
\includegraphics[width=0.32\textwidth]{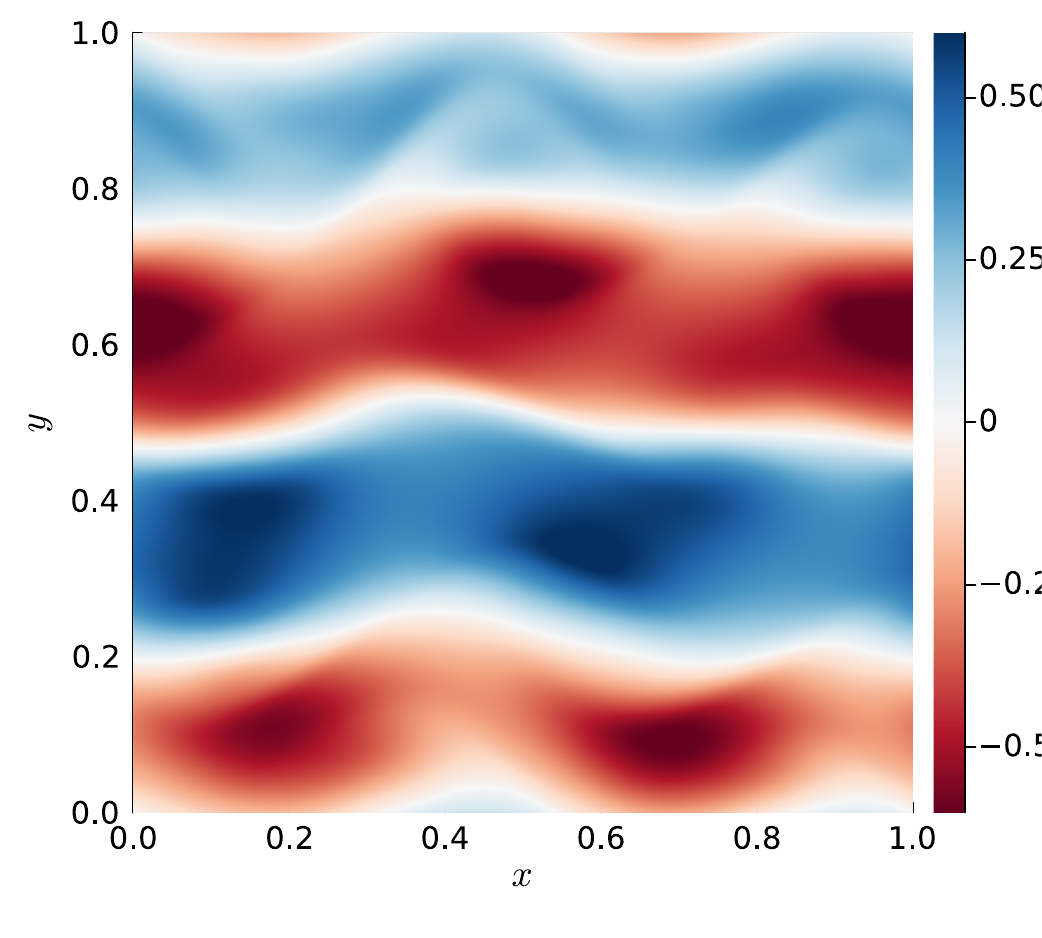}
\caption{KHI test at $t = 2$, $polydeg = 0$. 
Pseudocolor plots of $\rho_{1,h}$ (top row), 
$\rho_{2,h}$ (middle row), and $\vel_{1,h}$ (bottom row) 
on meshes $512^2$, $1024^2$, $2048^2$ (left to right).}
\label{fig:khi_p0}
\end{figure}

\begin{figure}[htbp]
\centering
\includegraphics[width=0.32\textwidth]{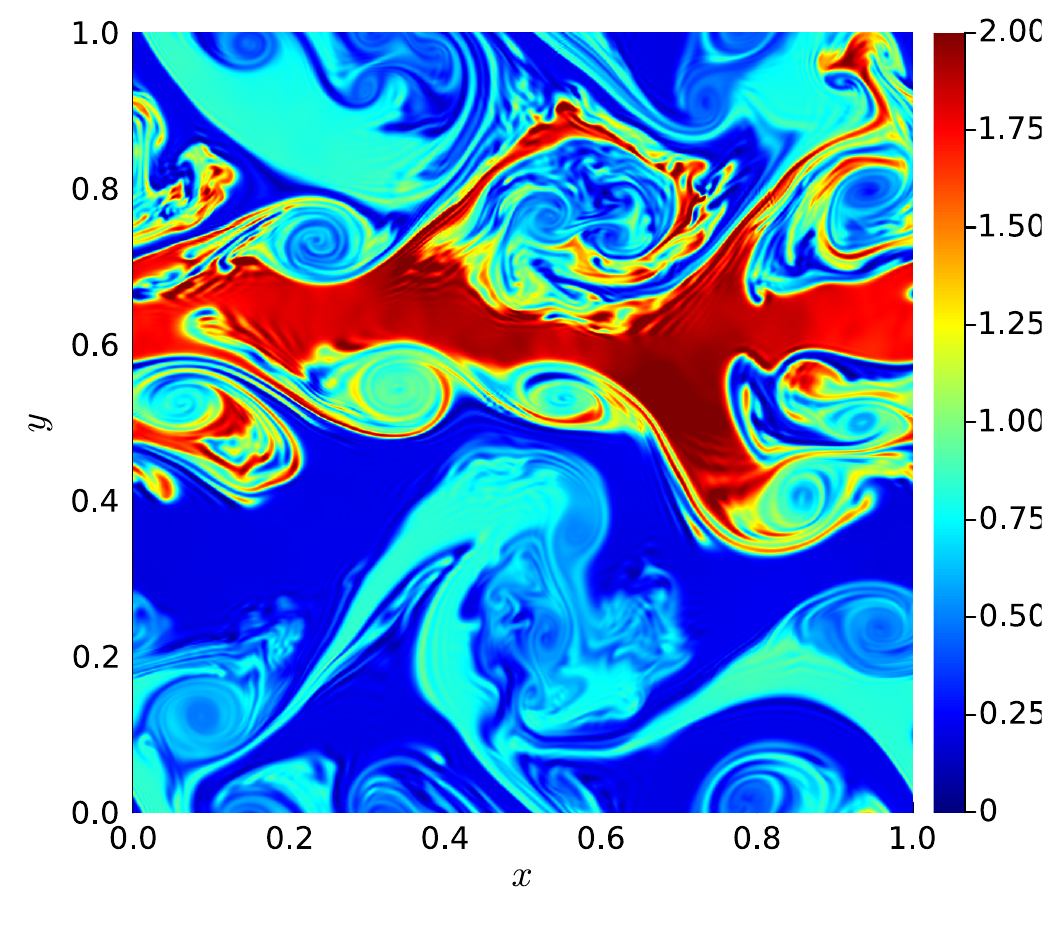}
\includegraphics[width=0.32\textwidth]{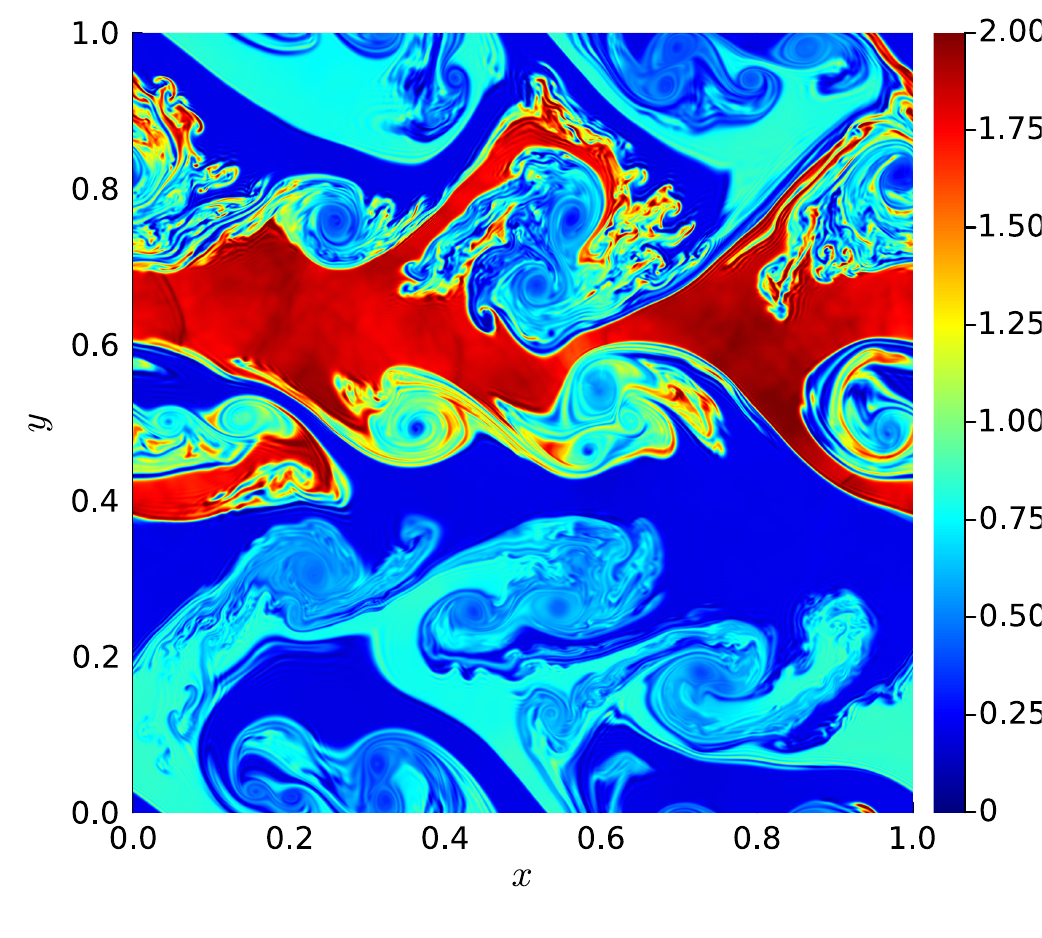}
\includegraphics[width=0.32\textwidth]{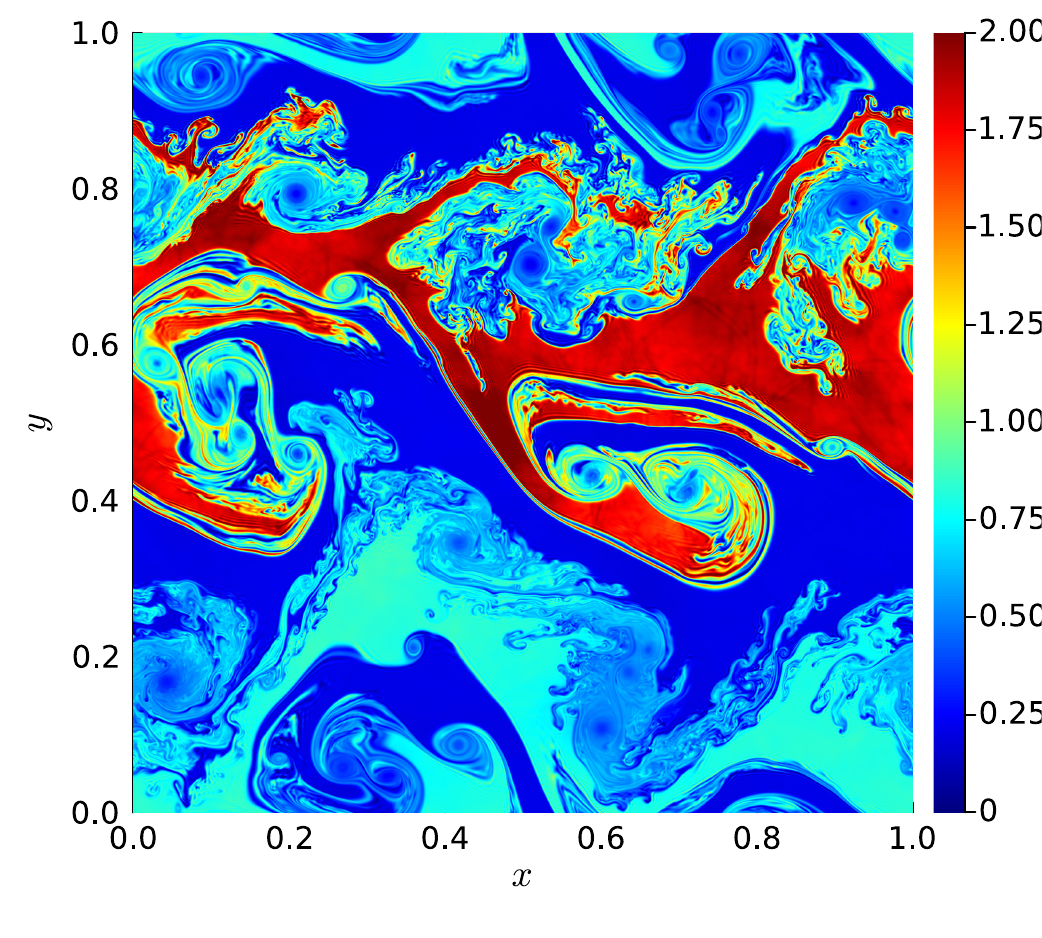}\\[4pt]
\includegraphics[width=0.32\textwidth]{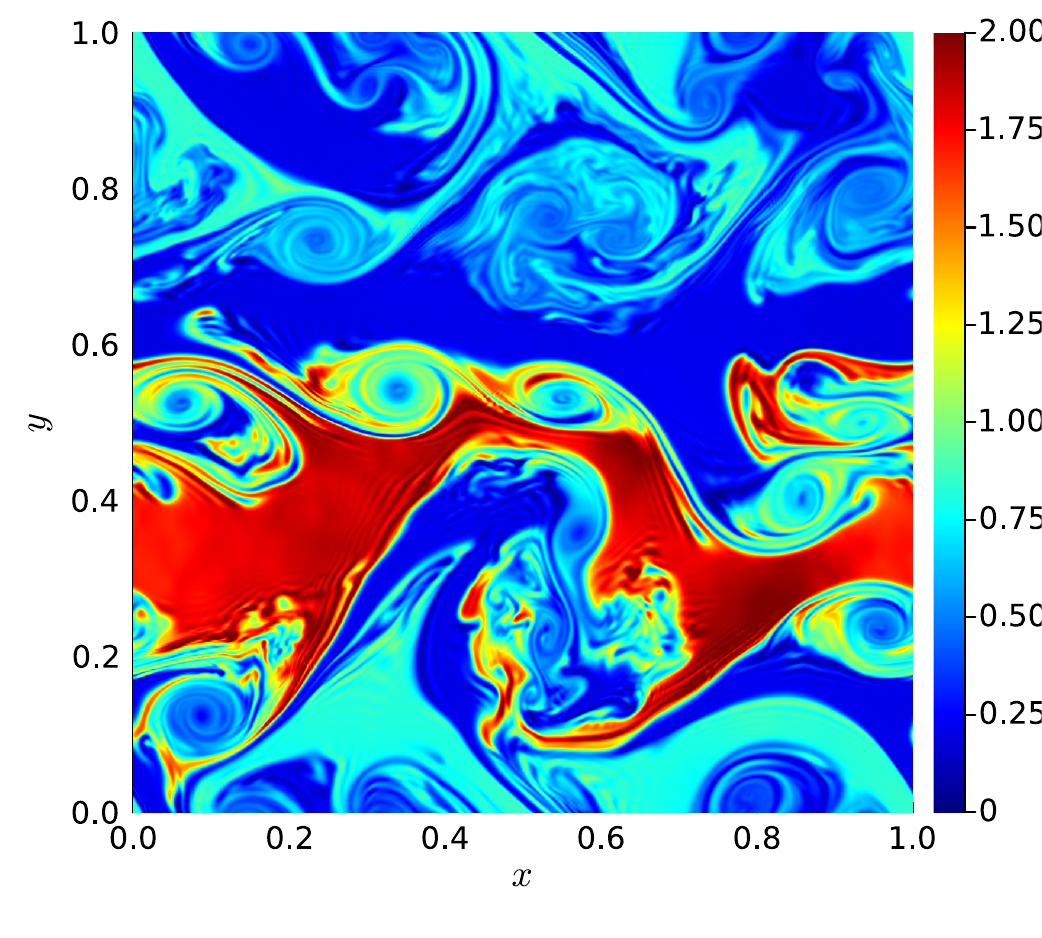}
\includegraphics[width=0.32\textwidth]{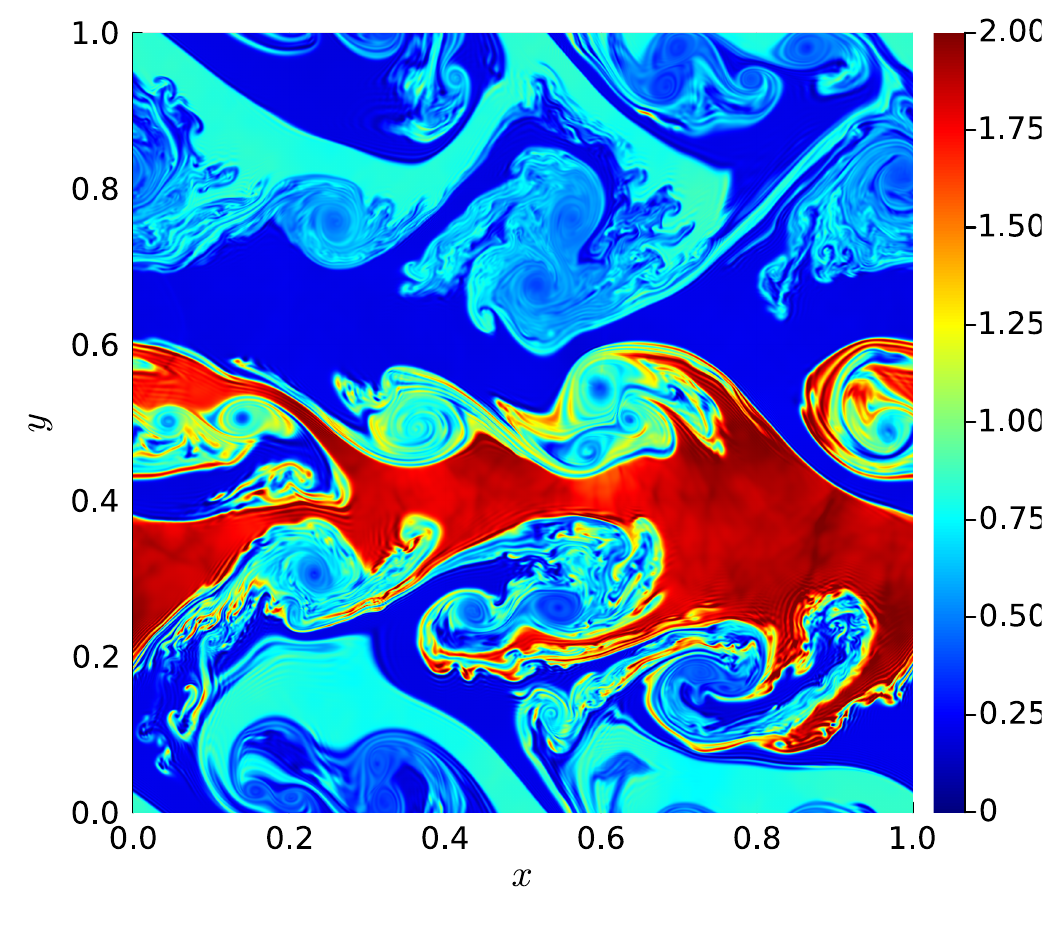}
\includegraphics[width=0.32\textwidth]{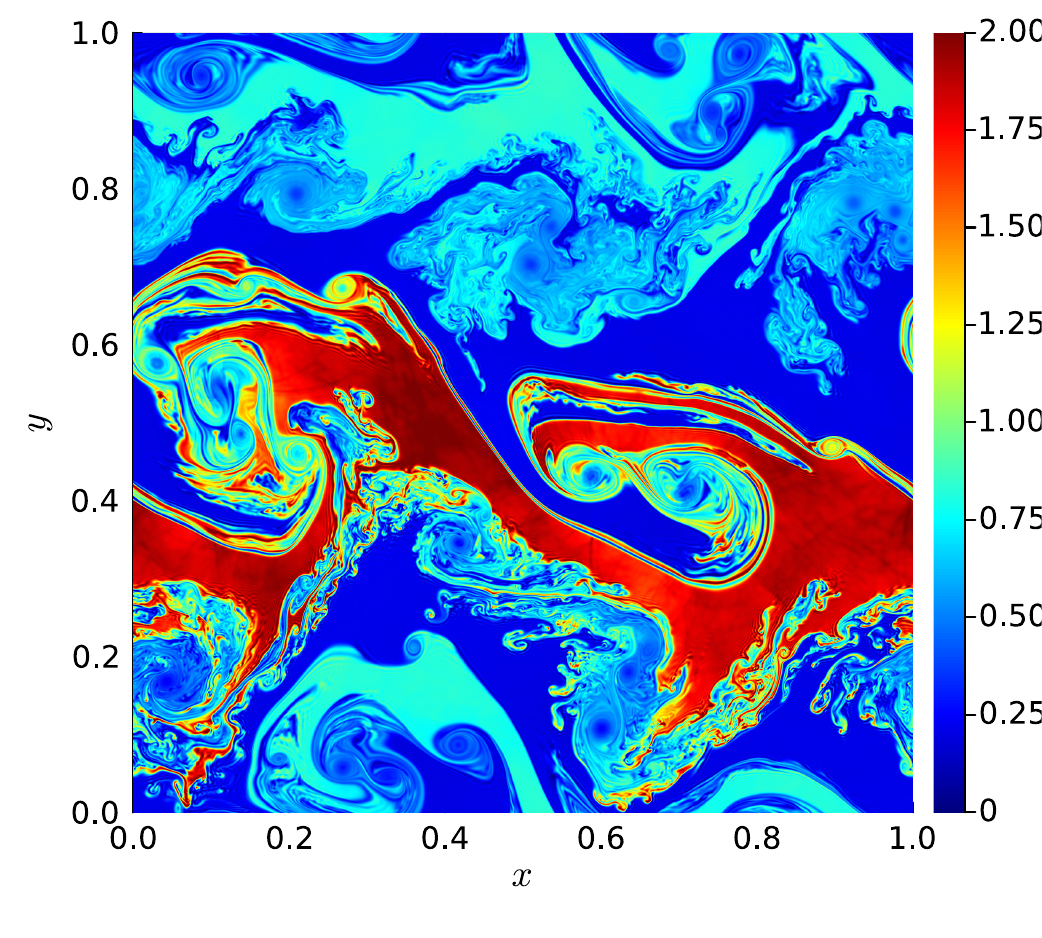}\\[4pt]
\includegraphics[width=0.32\textwidth]{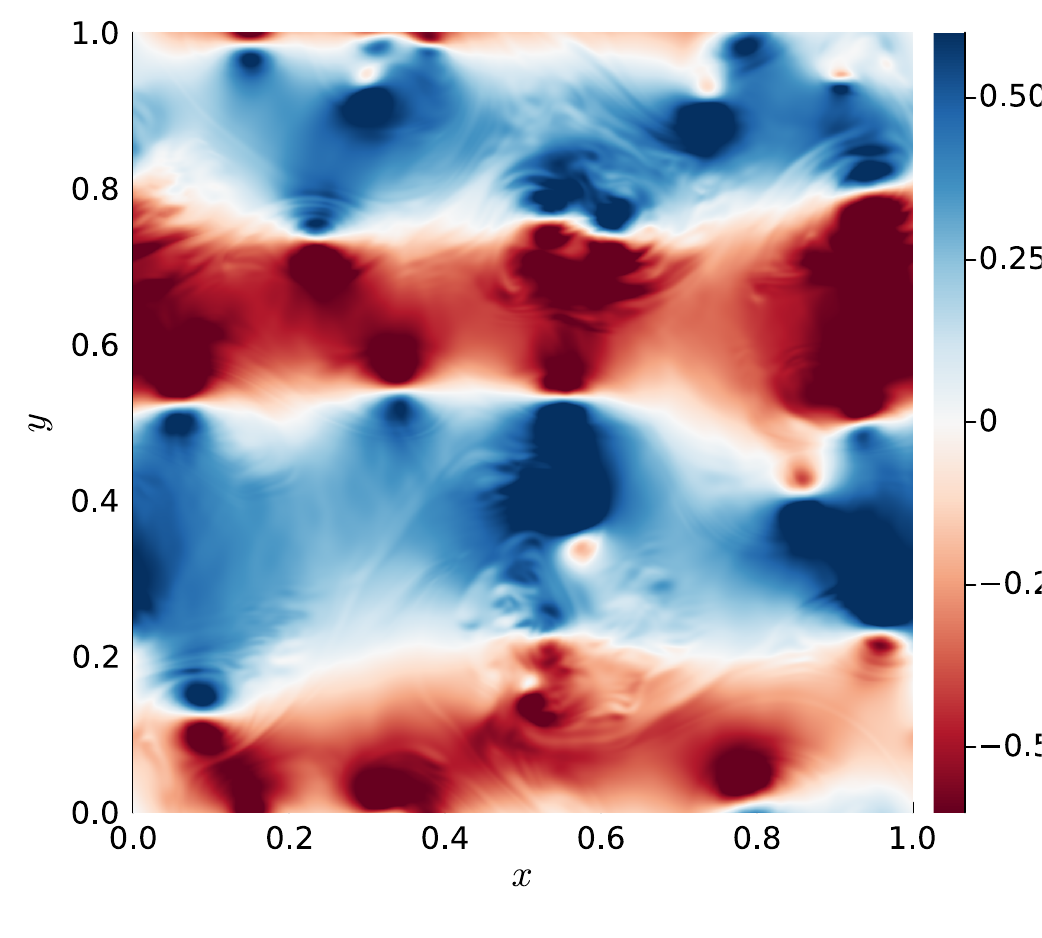}
\includegraphics[width=0.32\textwidth]{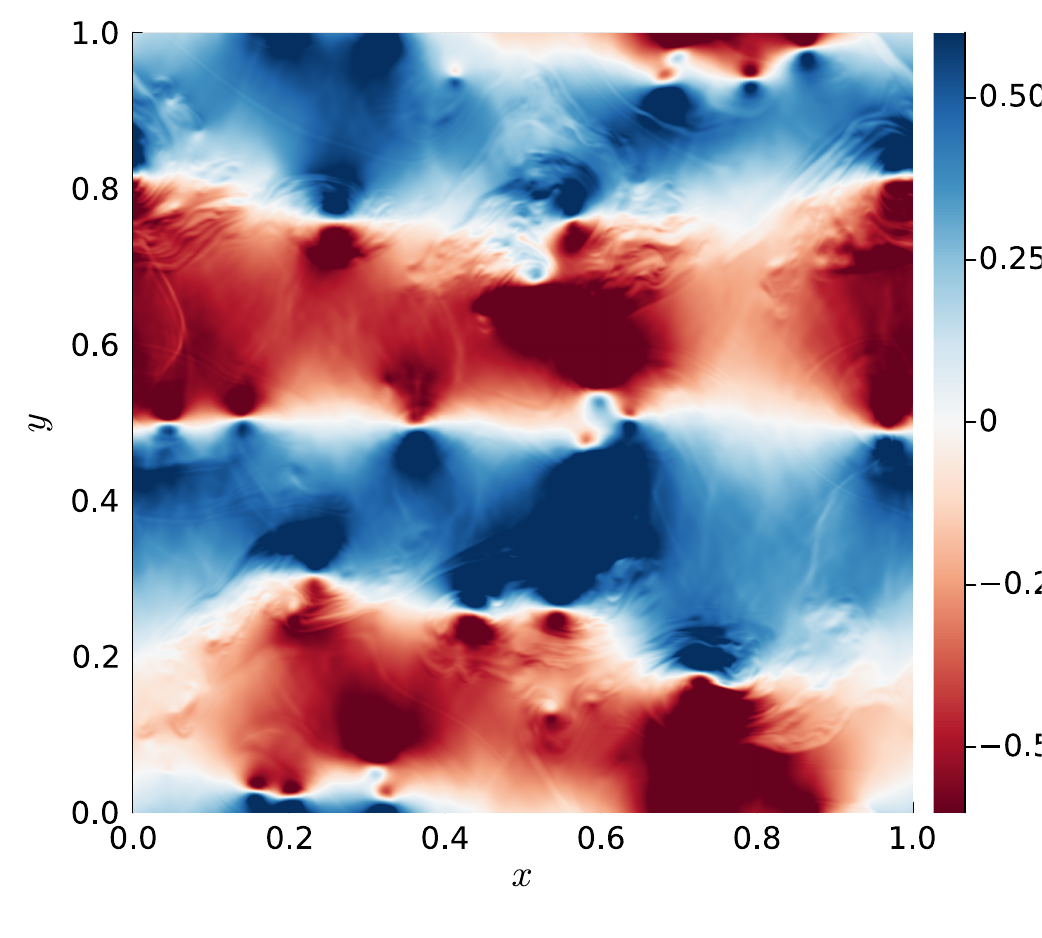}
\includegraphics[width=0.32\textwidth]{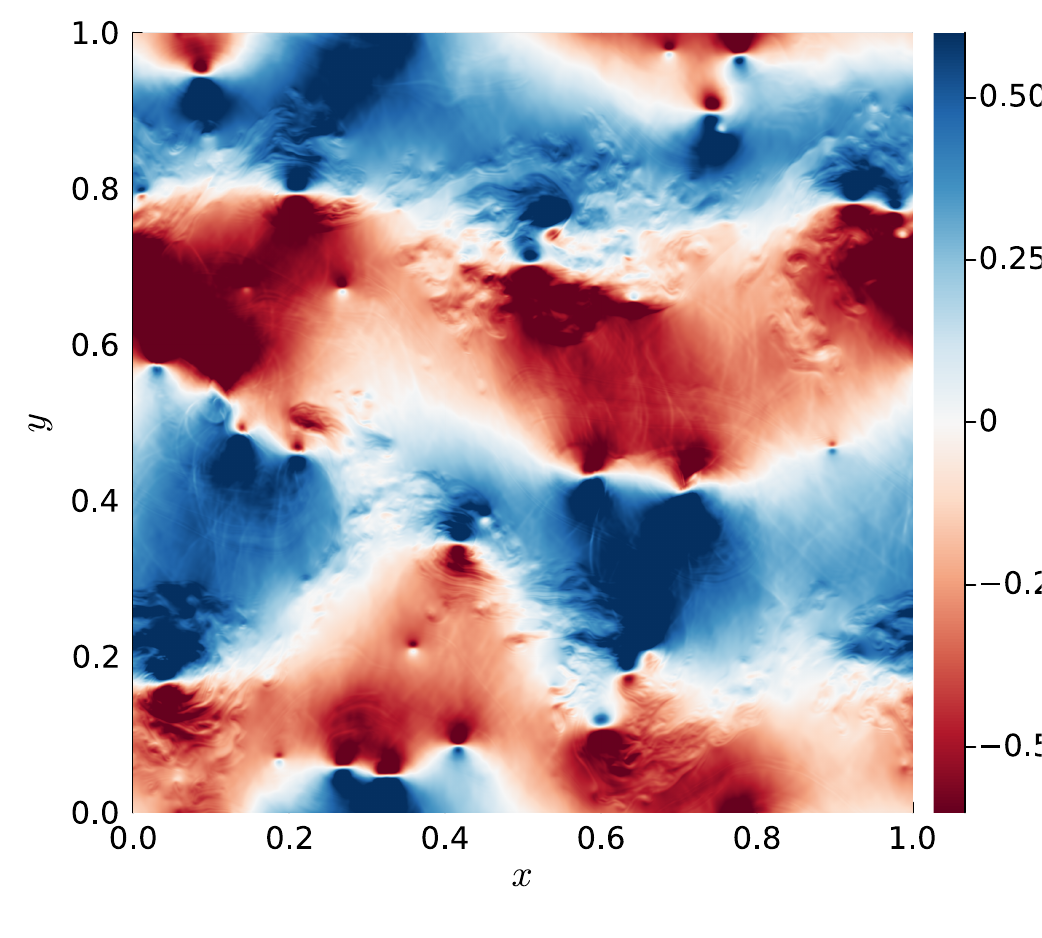}
\caption{KHI test at time $t = 2$, $polydeg = 1$. 
Pseudocolor plots of $\rho_{1,h}$ (top row), 
$\rho_{2,h}$ (middle row), and $\vel_{1,h}$ (bottom row) 
on meshes $512^2$, $1024^2$, $2048^2$ (left to right). 
The fine-scale mixing structures differ visually at 
each mesh level, consistent with the non-convergence 
of $E_1$ reported in Table~\ref{tab:khi}.}
\label{fig:khi_p1}
\end{figure}

\subsection{Shock-Bubble Interaction}

Finally, we validate the scheme on the two-dimensional shock-bubble interaction for completeness~(see \cite{gouasmi2020formulation} for details of the physical setup and we use the implementation available in \texttt{Trixi.jl} \cite{schlottkelakemper2025trixi}.)
The computational domain $\Omega = [-2.25,\,2.20] \times 
[-2.225,\,2.225]$ contains two species: dry air 
($\gamma_1 = 1.4$, $r_1 = 0.287$ kJ/(kg$\cdot$K)) and 
a helium-air mixture ($\gamma_2 = 1.648$, 
$r_2 = 1.578$ kJ/(kg$\cdot$K)). A Mach~$1.22$ planar 
shock, initially located at $x_1 = 0.5$, propagates 
leftward and interacts with a helium bubble of 
radius $R = 0.25$ centered at the origin. The initial 
states in the three regions are taken from~\cite{gouasmi2020formulation} with positivity 
parameter $\delta\rho = 0.03$.

The spatial discretisation uses the nodal DGSEM with 
polynomial degree $polydeg = 2$ on a uniform mesh of 
$512 \times 512$ with periodic 
boundary conditions, implemented within the \texttt{Trixi.jl} framework. The scheme employs flux 
differencing with the volume 
flux~\cite{ranocha2018comparison}, the local 
Lax-Friedrichs surface flux, and the subcell 
shock-capturing of~\cite{hennemann2021provably} with 
$\alpha_{\max} = 0.5$, $\alpha_{\min} = 0.001$. 
Time integration uses the five-stage fourth-order 
low-storage Runge-Kutta scheme~\cite{kennedy2000low} at $\mathrm{CFL} = 0.3$ up to $t = 0.01$.

Figure~\ref{fig:shock_bubble} shows the solution at $t = 0.01$. 
The partial densities $\rho_1$ and $\rho_2$ capture the bubble deformation and Richtmyer-Meshkov instability development and produces results in significant agreement with \cite{gouasmi2020formulation, renac2021entropy}.

\begin{figure}[htbp]
\centering
\includegraphics[width=0.47\textwidth]{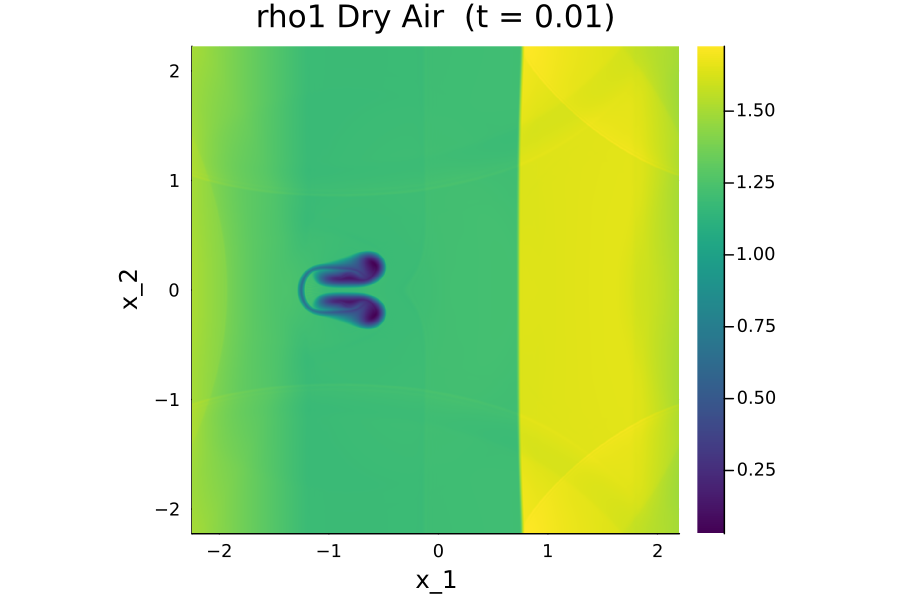}
\includegraphics[width=0.47\textwidth]{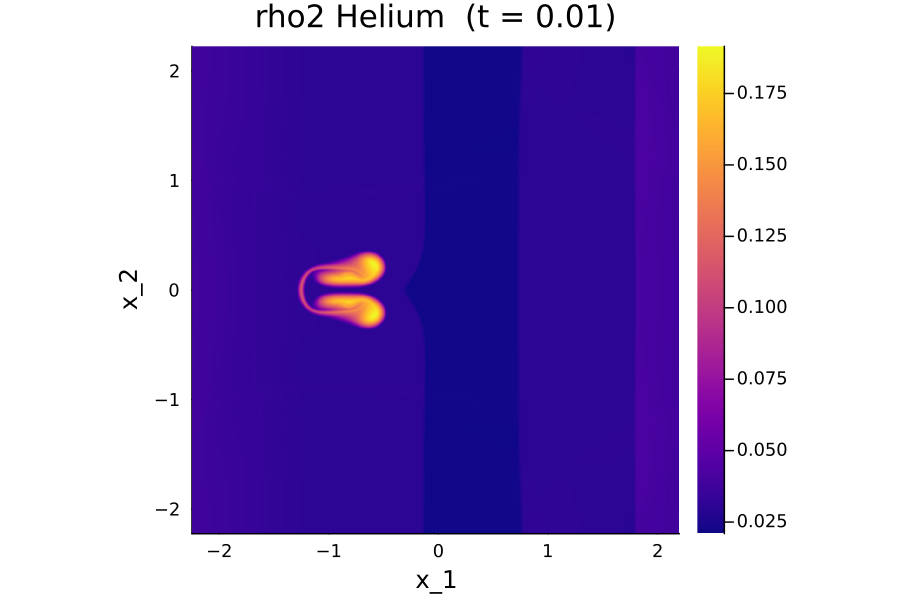}
\caption{Shock-bubble interaction at $t = 0.01$: partial 
densities $\rho_1$  and  $\rho_2$.}
\label{fig:shock_bubble}
\end{figure}
\section{Conclusion}

We have presented a convergence analysis of entropy-stable 
FV schemes for the multicomponent 
compressible Euler system in the framework of DW solutions. The scheme preserves the key physical 
structure of the mixture: positivity of all partial densities 
(Theorem~\ref{thm:density_positivity}), positivity of 
pressure and temperature (Lemma~\ref{pos_pressure}), 
and uniform stability estimates for the total density, 
momentum, and energy under some physical assumptions. These 
properties, combined with the weak BV estimate 
\eqref{BV_bound}, provide the arguments to prove convergence.
Furthermore, by applying the relative entropy framework, we demonstrate as well  the weak-strong 
uniqueness principle (Theorem~\ref{thm:WSU}) for completeness, and demonstrate that the numerical solutions converge strongly as long as strong solutions exist.  Numerical experiments for two components 
confirm the expected order of convergence  for smooth solutions and 
convergence of Ces\`aro averages for the 
Kelvin-Helmholtz instability, consistent with the 
theoretical predictions and similar to convergence results of structure-preserving numerical schemes for the Euler equations. 
A key requirement in our analysis was that no vacuum appears in any partial density. For future research,  it will be interesting if one relaxes this property by investigating the case where only the total density has to be bounded away from zero and partial densities can vanish. This is also related to designing adequate numerical schemes in such a context. 

\appendix
\section{Entropy Dissipation}\label{app:entropy_dissipation}

In this appendix, we justify the dissipation mechanism in the semi-discrete FV scheme in a general setting. Recalling the definitions~\eqref{ent_prod2}, and inserting them into the left-hand side of~\eqref{disc_entrorpy_inq}, the entropy production in a cell $K$ may be written as
\begin{equation}\label{eq:entropy_prod_decomp}
\mathcal{P}_K = \sum_{L \in \mathcal{N}(K)} \frac{|S_{KL}|}{2}
\bigl(\entpot_K - \entpot_L\bigr)
- \sum_{L \in \mathcal{N}(K)} \frac{|S_{KL}|}{2}\lambda_{KL}
\Bigl[\entvar_K^T(\con_L - \con_K) - (\ent_L - \ent_K)\Bigr],
\end{equation}
where $\entpot = \entvar^T\conflux - \entf$ is the entropy potential. The conservative contribution is antisymmetric across neighbouring cells and therefore contributes only to the numerical entropy flux. Entropy stability thus depends on the sign of the dissipative term.

By Taylor's theorem applied to $\ent$, there exists 
$\tilde{\con}$ on the segment joining $\con_K$ and $\con_L$ 
such that
\begin{equation}\label{eq:taylor_entropy}
\ent(\con_L) = \ent(\con_K) + \nabla_{\con}\ent(\con_K)^T(\con_L - \con_K)
+ \frac{1}{2}(\con_L - \con_K)^T \mathbf{H}(\tilde{\con})(\con_L - \con_K),
\end{equation}
where $\mathbf{H}(\con) = \nabla^2_{\con}\ent(\con)$ is the 
entropy Hessian. Since $\nabla_{\con}\ent = \entvar$, 
rearranging \eqref{eq:taylor_entropy} gives
\begin{equation}\label{eq:taylor_rearranged}
\entvar_K^T(\con_L - \con_K) - (\ent_L - \ent_K)
= -\frac{1}{2}(\con_L - \con_K)^T \mathbf{H}(\tilde{\con})(\con_L - \con_K).
\end{equation}

By Assumption~\ref{as_1}, Theorem~\ref{thm:density_positivity}, 
and Lemma~\ref{pos_pressure}, the solution satisfies 
$\rho_{i,h} \geq \underline{\varrho} > 0$, $\temp_h \geq T_* > 0$, 
and $\pressure_h \geq p_* > 0$ uniformly. For such physically 
admissible states, the entropy $\ent = -\rho s$ is strictly 
convex in conservative variables, so $\mathbf{H}(\tilde{\con})$ 
is positive definite:
\begin{equation}\label{eq:hessian_pd}
(\con_L - \con_K)^T \mathbf{H}(\tilde{\con})(\con_L - \con_K) \geq 0.
\end{equation}

Therefore, the dissipative contribution is non-negative:
\begin{equation}\label{r_KL_def}
-\frac{\lambda_{KL}}{2}\bigl[\entvar_K^T(\con_L - \con_K) - (\ent_L - \ent_K)\bigr]
= \frac{\lambda_{KL}}{4}(\con_L - \con_K)^T \mathbf{H}(\tilde{\con})(\con_L - \con_K)
=: r_{KL} \geq 0.
\end{equation}

Collecting the above observations, we infer that the semi-discrete scheme satisfies the cellwise entropy inequality
\begin{equation}\label{eq:discrete_entropy_ineq_final}
\frac{d}{dt}\ent(\con_K) + \frac{1}{|K|}\sum_{L \in \mathcal{N}(K)}
|S_{KL}|\hat{\entf}_{KL} \leq 0,
\end{equation}
where $\hat{\entf}_{KL}$ is the numerical entropy flux 
defined by~\eqref{ent_prod2}. This establishes entropy 
stability of the scheme.


\section{Weak BV Estimate}\label{app:weakBV}

\begin{proposition}[Weak BV estimate]\label{prop:weak_BV}
Under Assumption~\ref{as_1}  the semi-discrete Lax-Friedrichs scheme \eqref{eq:semi_scheme}-\eqref{eq:LF_flux_def} with viscosity parameter satisfying~\eqref{local_diff} satisfies the weak BV estimate \eqref{BV_bound}.
\end{proposition}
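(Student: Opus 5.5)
The plan is to sum the cellwise entropy production over all cells, integrate in time, and obtain a uniform-in-$h$ bound on the total numerical dissipation. Cauchy--Schwarz then converts that bound into the weak BV estimate \eqref{BV_bound}.

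\textbf{Step 1: total dissipation.} Using the decomposition \eqref{eq:entropy_prod_decomp} and \eqref{r_KL_def} from Appendix~\ref{app:entropy_dissipation}, the scheme satisfies an entropy identity rather than just an inequality:
\[
|K|\tfrac{d}{dt}\ent_K + \sum_{L\in\mathcal{N}(K)}|S_{KL}|\hat{\entf}_{KL} = -\sum_{L\in\mathcal{N}(K)}|S_{KL}|\,r_{KL}.
\]
Summing over $K$ makes the flux terms cancel, by conservation $\hat{\entf}_{LK}=-\hat{\entf}_{KL}$ and periodicity. Integrating over $(0,T)$ then gives
\[
\int_0^T\sum_{KL\in\mathcal{E}_h}|S_{KL}|\,\frac{\lambda_{KL}}{4}(\con_L-\con_K)^T\mathbf{H}(\tilde\con)(\con_L-\con_K)\,dt
\le \int_\Omega\ent_h(0)\,dx-\int_\Omega\ent_h(T)\,dx .
\]
The right-hand side is bounded uniformly in $h$. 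The initial term is controlled by the projected data, since $\ent$ is continuous on the admissible set. For the final term, $\int\ent_h(T)$ is bounded from below: Assumption~\ref{as_1} and Lemma~\ref{pos_pressure} confine $\rho_{i,h},\temp_h$ to a compact set of positive values, so $\ent=-\rho s$ is bounded there. Alternatively, one can use the standard bound of $-\rho s$ from below by energy and mass, via $\ln\temp\le C(1+\temp)$ and $\rho_i\ln\rho_i\ge-1/e$.

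\textbf{Step 2: coercivity of the Hessian.} Under Assumption~\ref{as_1}, together with Theorem~\ref{thm:density_positivity} and Lemma~\ref{pos_pressure}, all states $\con_K$ lie in a compact convex subset $\mathcal{K}$ of the admissible region on which $\ent$ is strictly convex. The segments joining $\con_K$ and $\con_L$ stay inside $\mathcal{K}$ if we take $\mathcal{K}$ to be the convex hull, which is still admissible because the admissible set is convex. This yields $\mathbf{H}(\tilde\con)\ge c_0\mathbb{I}$ with $c_0>0$ independent of $h$.

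In the same way, $\lambda_{KL}\le\lambda\le\bar\lambda$ uniformly, since velocity and sound speed are bounded. Combining with Step 1 gives
\[
\int_0^T\sum_{KL}h^{N-1}\lambda_{KL}|\con_L-\con_K|^2\,dt\le C .
\]

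\textbf{Step 3: Cauchy--Schwarz.} On the torus the number of faces is $\#\mathcal{E}_h\sim h^{-N}$. Hence
\[
\int_0^T\sum_{KL}\lambda_{KL}|\con_L-\con_K|h^N\,dt\le\Big(\int_0^T\sum_{KL}\lambda_{KL}h^{N+1}\,dt\Big)^{1/2}\Big(\int_0^T\sum_{KL}\lambda_{KL}h^{N-1}|\con_L-\con_K|^2\,dt\Big)^{1/2}.
\]
The first factor is at most $(T\bar\lambda\,h)^{1/2}$ and the second is at most $C$. The product is therefore $\le Ch^{1/2}\to0$, which proves \eqref{BV_bound}.

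\textbf{Main obstacle.} The delicate step is Step 2: securing a uniform positive lower bound on the entropy Hessian along the interpolation segments. This requires the admissible compact set from Assumption~\ref{as_1} to be convex in conservative variables with temperature bounded away from $0$. I would handle it by noting that $\{\rho_i\ge\underline\varrho,\ \rho e\ge \epsilon\rho,\ \ener\le C\}$ is convex, because $\rho e=\ener-|\moment|^2/(2\rho)-\sum_i\rho_ie_{0i}$ is concave in $\con$. This gives uniform strict convexity of $-\rho s$ there.
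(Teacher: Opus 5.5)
Your proposal is correct and follows essentially the paper's own argument. You bound the total numerical dissipation $\int_0^T\sum_{KL}|S_{KL}|\,r_{KL}\,dt$ through the global entropy balance, turn it into a bound on $\int_0^T\sum_{KL}h^{N-1}\lambda_{KL}|\con_L-\con_K|^2\,dt$ via a uniform lower bound on the entropy Hessian, and conclude with Cauchy--Schwarz. Your version is slightly more careful than the paper's: you justify the lower bound on $\int_\Omega\ent_h(T)\,dx$, keep the mean-value point $\tilde\con$ inside a compact convex admissible set, and get the explicit rate $O(h^{1/2})$ directly instead of appealing to a renormalized entropy argument. Note also that Step~1 needs only the summed inequality $\int_0^T\sum_{KL}|S_{KL}|\,r_{KL}\,dt\le\int_\Omega\ent_h(0)\,dx-\int_\Omega\ent_h(T)\,dx$, which is what the paper uses, not an exact cellwise entropy identity.
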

\begin{proof}
By Assumption~\ref{as_1}, Theorem~\ref{thm:density_positivity}, 
and Lemma~\ref{pos_pressure}, the entropy $\ent = -\rho s$ 
satisfies
\begin{equation}\label{eq:unif_conv}
D^2_{\con}\ent(\con_h) \geq \alpha\,\mathbb{I}
\quad \text{a.e., for some } \alpha > 0 \text{ independent of } h.
\end{equation}
The Lax-Friedrichs flux is entropy 
stable, yielding the discrete 
entropy inequality
\begin{equation}\label{eq:discrete_entropy_ineq}
\frac{d}{dt}\sum_K\ent(\con_K)|K| + 
\sum_{KL\in\mathcal{E}_h}\hat{\entf}_{KL}|S_{KL}| 
\leq -\sum_{KL\in\mathcal{E}_h}r_{KL}|S_{KL}|,
\end{equation}
where $r_{KL} \geq 0$ is the entropy production~\eqref{r_KL_def}. 
Integrating over $(0,T)$ gives 
$$\int_0^T\sum_{KL}|S_{KL}|\,r_{KL}\,dt \leq C,$$
By the renormalized entropy argument~\eqref{eq:renorm-entropy}(see also~\cite{feireisl2021numerical} for details),
\begin{equation}\label{eq:entropy_prod_vanish}
\int_0^T\sum_{KL\in\mathcal{E}_h} h^N r_{KL}(t)\,dt 
\longrightarrow 0 \quad\text{as }h\to 0.
\end{equation}
The mean value theorem and~\eqref{eq:unif_conv} yield
\begin{equation}\label{eq:r_lower_bound}
r_{KL} \geq \frac{\alpha}{4}\lambda_{KL}|[\con]_{KL}|^2,
\quad [\con]_{KL} := \con_L - \con_K.
\end{equation}
Combining \eqref{eq:entropy_prod_vanish} and 
\eqref{eq:r_lower_bound},
\[
\int_0^T\sum_{KL}h^N\lambda_{KL}|[\con]_{KL}|^2\,dt 
\longrightarrow 0.
\]
By the Cauchy--Schwarz inequality, we obtain
\[
\int_0^T\sum_{KL}h^N\lambda_{KL}|[\con]_{KL}|\,dt
\leq \Bigl(\int_0^T\sum_{KL}h^N\lambda_{KL}\,dt\Bigr)^{1/2}
\Bigl(\int_0^T\sum_{KL}h^N\lambda_{KL}|[\con]_{KL}|^2\,dt\Bigr)^{1/2}.
\]
The second factor vanishes. The first is bounded by the 
discrete trace inequality~\cite{feireisl2020convergence} 
and Assumption~\ref{as_1}: 
$\int_0^T\sum_{KL}h^N\lambda_{KL}\,dt \leq C$, 
yielding~\eqref{BV_bound}.
\end{proof}

\subsection*{Acknowledgment }
The work of Jaya Agnihotri and Philipp Öﬀner was supported by the German Research Foundation (DFG) under the   grant OE 661/4-1(520756621).

\bibliographystyle{plain}
\bibliography{reference}	

\end{document}